\setlist{nolistsep}
\theoremstyle{plain}
\newtheorem{thm}{Theorem}[section]
\newtheorem{remark}{\textbf{Remark}}[section]
\newcommand{\eps}{\epsilon}
\newcommand{\bm}{\boldsymbol}
\newcommand{\bu}{{\mathbf u}}
\newcommand{\bv}{{\mathbf v}}
\newcommand{\Grad}[1]{\nabla #1}
\newcommand{\ph}{\phi}
\newcommand{\be}{\begin{equation}}
\newcommand{\ee}{\end{equation}}
\newcommand{\bse}{\begin{subequations}}
\newcommand{\ese}{\end{subequations}}
\def\benl{\begin{eqnarray*}}
\def\eenl{\end{eqnarray*}}
\def\bu{\bm{u}}
\def\be{\bm{e}}
\def\bx{\bm{x}}
\def\bmu1{\bm{\mu_1}}
\def\ln{{\rm ln}}
\newcommand{\ben}{\begin{eqnarray}}
\newcommand{\een}{\end{eqnarray}}
\newcommand{\beq}{\begin{equation}}
\newcommand{\eeq}{\end{equation}}
\newcommand{\bea}{\begin{array}}
\newcommand{\eea}{\end{array}}
\newcommand{\bef}{\begin{figure}[H]}
\newcommand{\eef}{\end{figure}}
\begin{document}
\bibliographystyle{plain}
\graphicspath{ {Figures/} }

\title[A new Lagrange Multiplier approach]
{A new  Lagrange Multiplier approach for gradient flows}

\thanks{
${}^\dag$ Department of Applied Mathematics, Illinois Institute of Technology, Chicago, IL 60616, USA (qcheng4$@$iit.edu, cliu124$@$iit.edu). The work of C.L. is supported in part by NSF grant DMS-1759536.\\
 ${}^\ddag$ Department of Mathematics, Purdue University, West Lafayette, IN 47907, USA  (shen7$@$purdue.edu). The work of J. S. is supported in part by NSF grants  DMS-1620262, DMS-1720442 and AFOSR  grant FA9550-16-1-0102.}

  \author{Qing Cheng$^\dag$
             \and Chun Liu$^\dag$ \and Jie Shen$^\ddag$}

\date{}
\keywords{phase-field; gradient flow; SAV approach; energy stability; Lagrange Multiplier; block polymer}

\maketitle

\begin{abstract}
We propose a new  Lagrange Multiplier approach to design unconditional energy stable  schemes for gradient flows.   The new approach leads to unconditionally energy stable schemes that are as accurate and efficient as the recently proposed SAV approach \cite{SAV01}, but enjoys two additional advantages: (i)   schemes based on the new approach  dissipate the original energy, as opposed to a modified energy in the recently proposed SAV approach \cite{SAV01}; and (ii) they do not require the nonlinear part of the free energy to be bounded from below as is required in the SAV approach. The price we pay for these advantages is that a nonlinear algebraic equation has to be solved to determine the Lagrange multiplier. 
We present ample numerical results to validate the new approach, and, as a particular example of applications,  we consider  a coupled Cahn-Hilliard model for block copolymers (BCP), and carry out interesting simulations which  are consistent with  experiment results.

\end{abstract}

\section{Introduction}
Gradient flows   play  an  important  role in science and engineering as a large class of mathematical models which can be described by PDEs in the form of gradient flow,  some particularly interesting  examples include:    crystal growth \cite{tong2001phase,kobayashi1993modeling,braun1997adaptive},  solidification \cite{boettinger2002phase,wang1993thermodynamically,karma1996phase},  tumor growth \cite{oden2010general,wise2008three},  thin film \cite{karma1998spiral,wang2003phase},  liquid-vapor phase transformations \cite{wilding1998liquid,borcia2005phase},  fracture mechanics \cite{borden2012phase,miehe2010thermodynamically}.

How to design efficient and energy stable numerical schemes for gradient flows has attracted much attention in recent years, we refer to \cite{du2019phase} for a up-to-date extensive review on this subject. In particular, the recent proposed  SAV  approach \cite{shen2018scalar,shen2017new,cheng2018multiple}, which is inspired by the IEQ approach  \cite{yang2017efficient} which in its turn is originated from a Lagrange multiplier approach  in \cite{guillen2013linear,badia2011finite}, has proven to be very successful for a large class of gradient flows. 

We briefly describe the SAV approach below.
To fix the idea, we consider a system with total free energy  in the form 
\begin{equation}\label{orienergy}
 E(\phi)=\int_\Omega \frac12\mathcal{L}\phi\cdot \phi +F(\phi) d\bx,
\end{equation}
where $\mathcal{L}$ is certain linear positive operator. Then  
a general gradient flow with the above free energy  takes the following form
\begin{equation}\label{grad:flow}
\begin{split}
&\phi_t=-\mathcal{G}\mu,\\
&\mu=\frac{\delta E(\phi)}{\delta\phi}=\mathcal{L}\phi+F'(\phi),
\end{split}
\end{equation}
where  $\mathcal{G}$ is a positive operator describing the relaxation process of the system.

 Assuming $\int_{\Omega}F(\phi)d\bx>-C_0$ for some $C_0>0$,  we introduce a scalar auxiliary variable (SAV)  $r(t)=\sqrt{\int_{\Omega}F(\phi)d\bx+C_0}$ and rewrite \eqref{grad:flow} as 
\begin{eqnarray}
&&\partial_t\phi=-\mathcal{G}\mu,\label{sav:1}\\
&&\mu=\mathcal{L}\phi+\frac{r(t)}{\sqrt{\int_{\Omega}F(\phi)d\bx+C} }F'(\phi),\label{sav:2}\\
&&r_t=(\frac 12 \frac{F'(\phi)}{\sqrt{\int_{\Omega}F(\phi)d\bx+C}},\phi_t).\label{sav:3}
\end{eqnarray}
The system \eqref{sav:1}-\eqref{sav:3}, with the initial condition $r(0)=\sqrt{\int_{\Omega}F(\phi|_{t=0})d\bx+C_0}$ is equivalent to \eqref{grad:flow}. 
Then, one can construct  semi-implicit schemes, by treating the linear terms implicitly and nonlinear terms explicitly in the above, that are unconditionally energy stable schemes and  require solving only decoupled linear systems with constant coefficients at each time step. Hence they are very efficient. 

However, the unconditional energy stability is with respect to a modified energy $$\tilde E(\phi,r)=\int_{\Omega}\frac 12 {\mathcal L}\phi\cdot \phi d\bx+r^2,$$ not the original energy \eqref{orienergy}, and it requires $\int_{\Omega}F(\phi)d\bx$  to be bounded from below. A main purpose of this paper is to introduce a new Lagrange multiplier approach which, in addition to the main advantages of the SAV approach, leads to schemes which is unconditionally energy stable with the original energy and does not require the explicitly treated  part of the free energy to be bounded from below. 
Hence, it can be applied to a larger class of gradient flows. The small price we pay for these additional advantages is that we need to solve  a nonlinear algebraic equation for the Lagrange multiplier whose cost is negligible compared with the main cost. We present ample numerical evident to show that this new approach is an effective  approach for gradient flows.
As an example of interesting applications, we apply the new Lagrange Multiplier to simulate the dynamics of block-Copolymers modeled by a coupled Cahn-Hilliard equations \cite{avalos2016frustrated}.

The reminder of this paper is structured as follows. In Section 2, we introduce the new Lagrange Multiplier approach for gradient flows,  derive  its stability property and develop a fast implementation procedure and present numerical results to validate the new approach.  In Section 3, we develop the new Lagrange Multiplier approach for  gradient flows with multiple components. In Section 4, we describe an adaptive time stepping procedure. 
 In Section 5, we use the new approach to simulate  evolution of frustrated phases of block-Copolymers. Some concluding remarks are given in Section 6.

\section{New Lagrange Multiplier approach for gradient flows}

We introduce below a new Lagrange Multiplier approach for  gradient flows. 

 As in the SAV approach, we introduce a scalar auxiliary function  $\eta(t)$,  and  reformulate the gradient flow \eqref{grad:flow} as
\begin{equation}\label{new:pde}
\begin{split}
&\frac{\partial\phi}{\partial t}=-\mathcal{G}\mu,\\
&\mu=\mathcal{L}\phi+\eta(t) F'(\phi),\\
&\frac{d}{dt}\int_{\Omega}F(\phi) d\bx=\eta(t) \int_\Omega F'(\phi)\phi_td\bx .
\end{split}
\end{equation}
If we set the initial condition for $\eta(t)$ to be $\eta(0)=1$, then it is easy to see that 
the new system \eqref{new:pde} is equivalent to the original system \eqref{grad:flow}, i.e., 
 $\eta(t)\equiv 1$ in \eqref{new:pde}.

Note that unlike  the SAV approach where $r(t)$ is just an auxiliary variable, the role of $\eta(t)$ here is to serve as  a Lagrange Multiplier to enforce  dissipation of the original energy. Indeed, 
taking the inner products of the first two equations in the above with $\mu$ and $-\phi_t$, respectively, summing up the results together with the third equation, we 
obtain the original energy dissipative law:
\begin{equation}
\frac{d}{dt}E(\phi)=-(\mu,\mathcal{G}\mu).
\end{equation}
\subsection{New schemes, their stability and efficient implementation}
Similar to the SAV approach, we can construct efficient and accurate numerical schemes for \eqref{new:pde}. For example,  
a second order scheme based on Crank-Nicolson is as follows:
 \begin{eqnarray}
&&\frac{\phi^{n+1}-\phi^n}{\delta t}=-\mathcal{G}\mu^{n+\frac 12},\label{new:cn1}\\
&&\mu^{n+\frac 12}=\mathcal{L} \phi^{n+\frac 12}+F'(\phi^{\star,n})\eta^{n+\frac 12},\label{new:cn2}\\
&&(F(\phi^{n+1})-F(\phi^n),1)=\eta^{n+\frac 12}(F'(\phi^{\star,n}),\phi^{n+1}-\phi^n),\label{new:cn3}
\end{eqnarray} 
where $F'(\phi^{\star,n})=F'(\frac 32\phi^n-\frac 12\phi^{n-1})$. 

\begin{thm}\label{cn:the}
The Crank-Nicolson  scheme \eqref{new:cn1}-\eqref{new:cn3}, it satisfies the following energy dissipative law,  
\begin{eqnarray}\label{LAW:6}
\frac{E^{n+1}-E^n}{\delta t}=-(\mathcal{G}\mu^{n+\frac 12},\mu^{n+\frac 12}),
\end{eqnarray}
where $E^{n}=\frac 12(\mathcal{L}\phi^n,\phi^n)+\int_{\Omega}F(\phi^n)d\bx$.
\end{thm}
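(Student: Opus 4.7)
The plan is to mimic the continuous energy identity at the discrete level, exploiting the three-equation structure and the specific design of the Lagrange multiplier constraint \eqref{new:cn3}. Concretely, I would take the $L^2$ inner product of \eqref{new:cn1} with $\mu^{n+\frac12}$ and the inner product of \eqref{new:cn2} with $(\phi^{n+1}-\phi^n)/\delta t$, then add the results to \eqref{new:cn3} after dividing it by $\delta t$. The terms involving $\mu^{n+\frac12}$ on the left-hand sides should cancel, leaving a clean identity.

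The key algebraic steps are the following. From \eqref{new:cn1} one gets
\[
\frac{1}{\delta t}\bigl(\phi^{n+1}-\phi^n,\mu^{n+\frac12}\bigr)=-\bigl(\mathcal{G}\mu^{n+\frac12},\mu^{n+\frac12}\bigr).
\]
From \eqref{new:cn2}, using the symmetry and positivity of $\mathcal{L}$ together with the Crank--Nicolson identity $\phi^{n+\frac12}=\tfrac12(\phi^{n+1}+\phi^n)$, I would write
\[
\bigl(\mathcal{L}\phi^{n+\frac12},\phi^{n+1}-\phi^n\bigr)=\tfrac12\bigl(\mathcal{L}\phi^{n+1},\phi^{n+1}\bigr)-\tfrac12\bigl(\mathcal{L}\phi^n,\phi^n\bigr),
\]
and combine with the $F'(\phi^{\star,n})\eta^{n+\frac12}$ piece. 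The crucial substitution comes from \eqref{new:cn3}, which says that this nonlinear piece equals exactly $(F(\phi^{n+1})-F(\phi^n),1)$. Adding the potential contribution to the $\mathcal{L}$-part reconstructs $E^{n+1}-E^n$ on the right-hand side.

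Putting these together yields
\[
\frac{1}{\delta t}\bigl(\phi^{n+1}-\phi^n,\mu^{n+\frac12}\bigr)=\frac{E^{n+1}-E^n}{\delta t},
\]
and combining with the first identity gives \eqref{LAW:6}.

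I do not anticipate a serious obstacle here: the proof is essentially an exercise in pairing the scheme with the correct test functions, and the design of \eqref{new:cn3} is tailored to make the nonlinear contribution telescope into the exact increment $F(\phi^{n+1})-F(\phi^n)$. If anything requires care, it is making sure the symmetry of $\mathcal{L}$ is used (so that $(\mathcal{L}\phi^{n+\frac12},\phi^{n+1}-\phi^n)$ is the exact difference of quadratic forms) and emphasizing that, unlike typical Crank--Nicolson analyses of nonlinear free energies, no Taylor-expansion remainder appears because the constraint \eqref{new:cn3} replaces what would otherwise be an approximation error with an exact equality. This is what makes the scheme dissipate the \emph{original} energy rather than a modified one.
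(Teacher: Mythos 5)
Your proof is correct and follows essentially the same route as the paper: pair \eqref{new:cn1} with $\mu^{n+\frac12}$, pair \eqref{new:cn2} with $\pm(\phi^{n+1}-\phi^n)/\delta t$, divide \eqref{new:cn3} by $\delta t$, and sum, using the symmetry of $\mathcal{L}$ so that the Crank--Nicolson average yields the exact difference of quadratic forms. The paper states this in one sentence; your write-up simply makes the same cancellations explicit.
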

\begin{proof}
 Taking the inner products of \eqref{new:cn1} with $\mu^{n+\frac 12}$ and of \eqref{new:cn2} with $-\frac{\phi^{n+1}-\phi^n}{\delta t}$,  and multiplying \eqref{new:cn3} with $\frac1{\delta t}$, summing up the three relations, we obtain immediately the desired result.
\end{proof}
We now show how to solve scheme \eqref{new:cn1}-\eqref{new:cn3} efficiently.  We derive from  \eqref{new:cn1}-\eqref{new:cn2}  that 
\begin{eqnarray}\label{step:1}
\frac{\phi^{n+1}}{\delta t}+\mathcal{G}\mathcal{L}\phi^{n+1}=\frac{\phi^n}{\delta t}-\mathcal{G} F'(\phi^{\star,n})\eta^{n+1/2}.
\end{eqnarray}
We define   a  linear operator $\chi$  by
\begin{equation}
 \chi(\phi):=(\frac{1}{\delta t}I+\mathcal{G}\mathcal{L} )\phi,
 \end{equation}
 and  apply  the operator $\chi^{-1}$ to both sides of equation \eqref{step:1} to get
\begin{equation}\label{step:2}
\begin{split}
\phi^{n+1}&=\chi^{-1}\{\frac{\phi^n}{\delta t}\}+\eta^{n+1/2}\chi^{-1}\{-\mathcal{G} F'(\phi^{\star,n})\}\\&:=p^n+\eta^{n+1/2}q^n,
\end{split}
\end{equation}
with $p^n$ and $q^n$ being 
\begin{equation}\label{pq}
\begin{split}
p^n=\chi^{-1}\{\frac{\phi^n}{\delta t}\}, \quad  q^n=\chi^{-1}\{-\mathcal{G} F'(\phi^{\star,n})\}.
\end{split}
\end{equation}
 
Now we plug $\phi^{n+1}$ into equation \eqref{new:cn3} to obtain
\begin{equation}\label{ch:nonlinear}
\begin{split}
&(F(p^n+\eta^{n+1/2}q^n)-F(\phi^n),1)\\&=\eta^{n+1/2}(F'(\phi^{\star,n}),(p^n+\eta^{n+1/2}q^n)-\phi^n).
\end{split}
\end{equation}
The above equation  is a nonlinear algebraic equation  for $\eta^{n+1/2}$. The complexity of the nonlinear equation \eqref{ch:nonlinear} depends on $F(\phi)$, e.g., if $F(\phi)$ is the usual double well potential, i.e., $F(\phi)=\frac 14(\phi^2-1)^2$,  \eqref{ch:nonlinear} will be a fourth-order algebraic equation. In general, \eqref{ch:nonlinear} has multiple solutions, but since we are looking for $\eta^{n+1/2}$ as an approximation to 1, we can use  a Newton iteration with 1 as the initial condition, it will generally converge to a solution  closer to 1 (assuming $\delta t$ is not too large). 

To summarize, the  algorithm consists of the following steps:

 \begin{description}
 \item  [Step 1]   Compute  $p^n$ and $q^n$ from  \eqref{pq};       
  \item [Step 2]  Find $\eta^{n+1/2}$ by solving  \eqref{ch:nonlinear};
  \item [Step 3]  Set $\phi^{n+1}=p^n+\eta^{n+1/2}q^n$, goto the next step.
\end{description}

Note that at each time step, we only need to solve two linear,  constant coefficients  equations in the {\bf Step 1},  plus a nonlinear algebraic equation  in {\bf  Step 2}. Since the cost for solving the nonlinear algebraic equation is negligible compared with the cost in {\bf Step 1}, the new algorithm is essentially as efficient as the SAV  approach.

\begin{remark}
We note that for the scheme   \eqref{new:cn1}-\eqref{new:cn3},   the original energy is dissipative, as opposed to the fact that only  a modified energy is dissipative   in the SAV approach. Furthermore, our new Lagrange Multiplier approach does not require  the free energy $\int_{\Omega}F(\phi)d\bx$ to be bounded from below.   On the other hand, the new approach requires solving a nonlinear algebraic equation for the Lagrange multiplier which brings some additional costs and theoretical difficulties for its analysis.
\end{remark}

For dissipative systems, it is usually better to use schemes based on backward difference formula (BDF). A second-order scheme for \eqref{new:pde} based on BDF  can be constructed as follows:
\begin{eqnarray}
&&\frac{3\phi^{n+1}-4\phi^n+\phi^{n-1}}{2\delta t}=-\mathcal{G}\mu^{n+1},\label{new:bdf1}\\
&&\mu^{n+1}=\mathcal{L} \phi^{n+1}+F'(\phi^{\star,n})\eta^{n+1},\label{new:bdf2}\\
&&(3F(\phi^{n+1})-4F(\phi^n)+F(\phi^{n-1}),1)=\eta^{n+1}(F'(\phi^{\star,n}),3\phi^{n+1}-4\phi^n+\phi^{n-1}),\label{new:bdf3}
\end{eqnarray} 
where $F'(\phi^{\star,n})=F'(2\phi^n-\phi^{n-1})$.


\begin{thm}\label{bdf:thm}
The scheme \eqref{new:bdf1}-\eqref{new:bdf3} satisfies the following energy dissipative law:
 \begin{eqnarray}\label{LAW:5}
{\tilde E^{n+1}-\tilde E^n}=-{\delta t}(\mathcal{G}\mu^{n+1},\mu^{n+1})-\frac 14 (\mathcal{L}(\phi^{n+1}-2\phi^n+\phi^{n-1}),\phi^{n+1}-2\phi^n+\phi^{n-1}),
\end{eqnarray}
where $\tilde E^{n}=\frac 14(\mathcal{L}\phi^n, \phi^n)+(\mathcal{L}(2\phi^{n}-\phi^{n-1}), 2\phi^{n}-\phi^{n-1})+\int_{\Omega}\frac 12(3F(\phi^n)-F(\phi^{n-1}))d\bx$. 
\end{thm}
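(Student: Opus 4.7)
The plan is to imitate the proof of Theorem 2.1, but replace the Crank--Nicolson midpoint test functions by the BDF2 analogues. First I would take the inner product of \eqref{new:bdf1} with $2\delta t\,\mu^{n+1}$, yielding
$(3\phi^{n+1}-4\phi^n+\phi^{n-1},\mu^{n+1}) = -2\delta t(\mathcal{G}\mu^{n+1},\mu^{n+1})$.
Next I would take the inner product of \eqref{new:bdf2} with $-(3\phi^{n+1}-4\phi^n+\phi^{n-1})$, which produces on the right the linear part $-(\mathcal{L}\phi^{n+1},3\phi^{n+1}-4\phi^n+\phi^{n-1})$ plus the nonlinear contribution $-\eta^{n+1}(F'(\phi^{\star,n}),3\phi^{n+1}-4\phi^n+\phi^{n-1})$. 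Adding these two identities and invoking \eqref{new:bdf3} to replace the $\eta^{n+1}$ term by $-(3F(\phi^{n+1})-4F(\phi^n)+F(\phi^{n-1}),1)$ kills the multiplier entirely and leaves a purely algebraic relation involving only $\phi^{n-1},\phi^n,\phi^{n+1}$ and the dissipation $(\mathcal{G}\mu^{n+1},\mu^{n+1})$.

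The rest is standard BDF2 bookkeeping. The key tool is the scalar identity
$$2a(3a-4b+c)=a^2+(2a-b)^2-b^2-(2b-c)^2+(a-2b+c)^2,$$
which, since $\mathcal{L}$ is symmetric positive, upgrades verbatim to the weighted statement $2(\mathcal{L}\phi^{n+1},3\phi^{n+1}-4\phi^n+\phi^{n-1}) = A^{n+1}-A^n+(\mathcal{L}\delta^{n+1},\delta^{n+1})$, where $A^n := (\mathcal{L}\phi^n,\phi^n)+(\mathcal{L}(2\phi^n-\phi^{n-1}),2\phi^n-\phi^{n-1})$ and $\delta^{n+1} := \phi^{n+1}-2\phi^n+\phi^{n-1}$. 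For the free-energy piece, I would use the one-line telescoping identity $3F(\phi^{n+1})-4F(\phi^n)+F(\phi^{n-1}) = [3F(\phi^{n+1})-F(\phi^n)] - [3F(\phi^n)-F(\phi^{n-1})]$, which rewrites the integral as $B^{n+1}-B^n$ with $B^n := \tfrac{1}{2}\int_\Omega(3F(\phi^n)-F(\phi^{n-1}))d\bx$. Assembling these pieces, dividing by the appropriate constant, and matching terms to the stated definition of $\tilde E^n$ should reproduce the claimed identity \eqref{LAW:5}.

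The substantive content, rather than the main obstacle, is the cancellation of the Lagrange multiplier: the constraint \eqref{new:bdf3} is \emph{exactly} what is required for $\eta^{n+1}$ to disappear in the sum, which is the discrete reflection of the continuous fact that $\eta(t)\equiv 1$ enforces the original energy law. The one real point of care is the bookkeeping of factors of two between the $2\delta t$ in \eqref{new:bdf1}, the $1/2$ from the scalar BDF2 identity, and the coefficients appearing in the stated $\tilde E^n$; a careful tracking of these is needed to recover the stated coefficients of the two quadratic pieces of $\tilde E^n$ and the $\tfrac{1}{4}$ in front of the numerical dissipation $(\mathcal{L}\delta^{n+1},\delta^{n+1})$. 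Notably, no spatial integration by parts, no assumption on the sign of $F$, and no a~priori bound on $\eta^{n+1}$ are required: once the multiplier is eliminated, the identity is exact and unconditional in $\delta t$.
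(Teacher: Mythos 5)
Your proof is correct and follows essentially the same route as the paper: the same test functions (up to an overall factor of $2\delta t$ and a sign), the same scalar identity \eqref{iden1} applied to $\mathcal{L}$, and the same telescoping rewriting $3F(\phi^{n+1})-4F(\phi^n)+F(\phi^{n-1})=[3F(\phi^{n+1})-F(\phi^n)]-[3F(\phi^n)-F(\phi^{n-1})]$. One remark on your final "matching" step: your bookkeeping correctly yields the coefficient $\tfrac14$ on \emph{both} quadratic terms $(\mathcal{L}\phi^n,\phi^n)$ and $(\mathcal{L}(2\phi^n-\phi^{n-1}),2\phi^n-\phi^{n-1})$, whereas the stated $\tilde E^n$ carries $\tfrac14$ on the first and $1$ on the second --- the statement appears to contain a typo, and your version is the consistent one.
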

\begin{proof}
Taking the inner product of equation \eqref{new:bdf1} with $\mu^{n+1}$,  we obtain
\begin{equation}\label{id1}
\begin{split}
(\frac{3\phi^{n+1}-4\phi^n+\phi^{n-1}}{2\delta t},\mu^{n+1})=-(\mathcal{G}\mu^{n+1},\mu^{n+1}),
\end{split}
\end{equation}
Taking the inner product of  equation \eqref{new:bdf2} with $3\phi^{n+1}-4\phi^n+\phi^{n-1}$   and  using the identity
\begin{equation}\label{iden1}
(2a,3a-4b+c)=|a|^2-|b|^2+|2a-b|^2-|2b-c|^2+|a-2b+c|^2,
\end{equation}
we obtain
\begin{equation}\label{stab:bdf2}
\begin{split}
(\mu^{n+1}&,3\phi^{n+1}  -4\phi^n +\phi^{n-1})=(\mathcal{L}\phi^{n+1},3\phi^{n+1}-4\phi^n+\phi^{n-1})\\&+
(F'(\phi^{\star,n})\eta^{n+1},3\phi^{n+1}-4\phi^n+\phi^{n-1})\\&=\frac 12\{(\mathcal{L}\phi^{n+1},\phi^{n+1})-(\mathcal{L}\phi^n,\phi^n)+(\mathcal{L}(2\phi^{n+1}-\phi^n),2\phi^{n+1}-\phi^n)\\&-(\mathcal{L}(2\phi^{n}-\phi^{n-1}),2\phi^{n}-\phi^{n-1})+(\mathcal{L}(\phi^{n+1}-2\phi^n+\phi^{n-1}),\phi^{n+1}-2\phi^n+\phi^{n-1})\}\\&+\eta^{n+1}(F'(\phi^{\star,n}),3\phi^{n+1}-4\phi^n+\phi^{n-1}).
\end{split}
\end{equation}
Combining \eqref{id1}, \eqref{stab:bdf2} and \eqref{new:bdf3}, and using
\begin{equation*}\label{stab:id}
\begin{split}
3F(\phi^{n+1})-4F(\phi^n)+F(\phi^{n-1})=3F(\phi^{n+1})-F(\phi^n)-(3F(\phi^n)-F(\phi^{n-1})),
\end{split}
\end{equation*}
 we obtain the desired result.

\end{proof}
\begin{remark}
Note that $\tilde E^n$ defined in the above theorem is not exact in the form of original energy. But it is easy to check that $\tilde E^n$ is a second-order approximation of $E^n=\frac 12 ({\mathcal L}\phi^n, \phi^n)+\int_\Omega F(\phi^n) d\bx $.
\end{remark}

\subsection{Numerical validations}
We provide below some numerical tests to validate our new schemes.
\subsubsection{Accuracy test}
In this subsection,  in order to test accuracy and convergence rate of scheme BDF2 and Crank-Nicolson in time. 

We consider the  Allen-Cahn ($\mathcal{G}=I$) and Cahn-Hillard equation ($\mathcal{G}=-\Delta$) in the form
\begin{equation}\label{ac_ch}
\begin{split}
&\phi_t =- \mathcal{G}\mu +f, \\
&\mu=-\Delta\ph+\frac1{\epsilon^2}\phi(\phi^2-1),
\end{split}
\end{equation}
with periodic boundary conditions.
We choose a force function $f$   such that the exact solution is
\begin{equation*}
\phi(x,y)=(\frac{\sin(2x)\cos(2y)}{4}+0.48)(1-\frac{\sin^2(t)}{2}),
\end{equation*}
and use $128^2$ Fourier-spectral modes in space so that the error is dominated by the time discretization error.
In Table \ref{order_allen_cahn}, we present $L^{\infty}$ errors for both Allen-Cahn and Cahn-Hilliard equations at various time steps.
It is observed that both scheme BDF2 and CN achieve  second order convergence rate in time.

\begin{table}[ht!]
\centering
\begin{tabular}{r||c|c|c|c}
\hline
$\delta t$            & {$BDF2$}  & Order & {$CN$} & Order    \\ \hline
$2\times 10^{-3}$    &$1.11E(-5)$  & --   &$5.31E(-6)$ &$-$       \\\hline
$1\times 10^{-3}$     &$2.88E(-6)$ & $1.94$&$1.38E(-6)$ &$1.95$    \\\hline
$5\times 10^{-4}$     &$7.31E(-7)$ &$1.95$  &$3.57E(-7)$ &$1.98$  \\\hline
$2.5\times 10^{-4}$  &$1.85E(-7)$ &$1.95$ &$9.23E(-8)$ &$1.98$   \\ \hline
$1.25\times 10^{-4}$  &$4.64E(-8)$&$1.99$ &$2.32E(-8)$ &$1.99$ \\\hline
$6.25\times 10^{-5}$  &$1.17E(-8)$&$1.98$ &$5.86E(-9)$ &$1.91$ \\\hline
$3.125\times 10^{-5}$ &$2.93E(-9)$ &$2.00$ &$1.46E(-9)$ &$2.00$ \\\hline
\hline
\end{tabular}
\vskip 0.5cm
\caption{Accuracy test: with given exact solution for Allen-Cahn equation and $\eps^2=0.02$.  The $L^{\infty}$ errors at $t=0.1$ for the  phase variables $\phi $ computed by scheme BDF2 and Crank-Nicolson using various time steps.}\label{order_allen_cahn}
\end{table}

\begin{table}[ht!]
\centering
\begin{tabular}{r||c|c|c|c}
\hline
$\delta t$            & {$BDF2$}  & Order & {$CN$} & Order    \\ \hline
$2\times 10^{-3}$    &$2.75E(-6)$  & --   &$3.04E(-6)$ &$-$       \\\hline
$1\times 10^{-3}$     &$6.92E(-7)$ & $1.99$&$7.67E(-7)$ &$1.98$    \\\hline
$5\times 10^{-4}$     &$1.73E(-7)$ &$2.00$  &$1.92E(-7)$ &$1.99$  \\\hline
$2.5\times 10^{-4}$  &$4.36E(-8)$ &$1.98$ &$4.83E(-8)$ &$1.99$   \\ \hline
$1.25\times 10^{-4}$  &$1.09E(-9)$&$2.00$ &$1.20E(-8)$ &$2.00$ \\\hline
$6.25\times 10^{-5}$  &$2.72E(-9)$&$2.00$ &$3.02E(-9)$ &$1.99$ \\\hline
$3.125\times 10^{-5}$ &$6.80E(-10)$ &$2.00$ &$7.57E(-10)$ &$2.00$ \\\hline
\hline
\end{tabular}
\vskip 0.5cm
\caption{Accuracy test: with given exact solution for Cahn-Hilliard equation and $\eps^2=0.06$.  The $L^{\infty}$ errors at $t=0.1$ for the  phase variables $\phi $ computed by scheme BDF2 and Crank-Nicolson using various time steps.}\label{order_cahn_Hilliard}
\end{table}

Next we examine the energy stability. We take $f=0$ in domain $[0,2\pi)^2$ and start with a random initial condition
\begin{equation*}
\phi(x,y)=0.03+0.001\,{\rm rand}(x,y),
\end{equation*}
where ${\rm rand}(x,y)$ represents random data between $[-1,1]^2$.
 The energy curves are plotted for both Allen-Cahn equation  and Cahn-Hilliard equation with $\eps^2=0.005$ in 
 Fig.\,\ref{Energy_cahn}.  It is observed from this figure that the computed energy for both cases decay with time.

\begin{figure}
\centering
\includegraphics[width=0.5\textwidth,clip==]{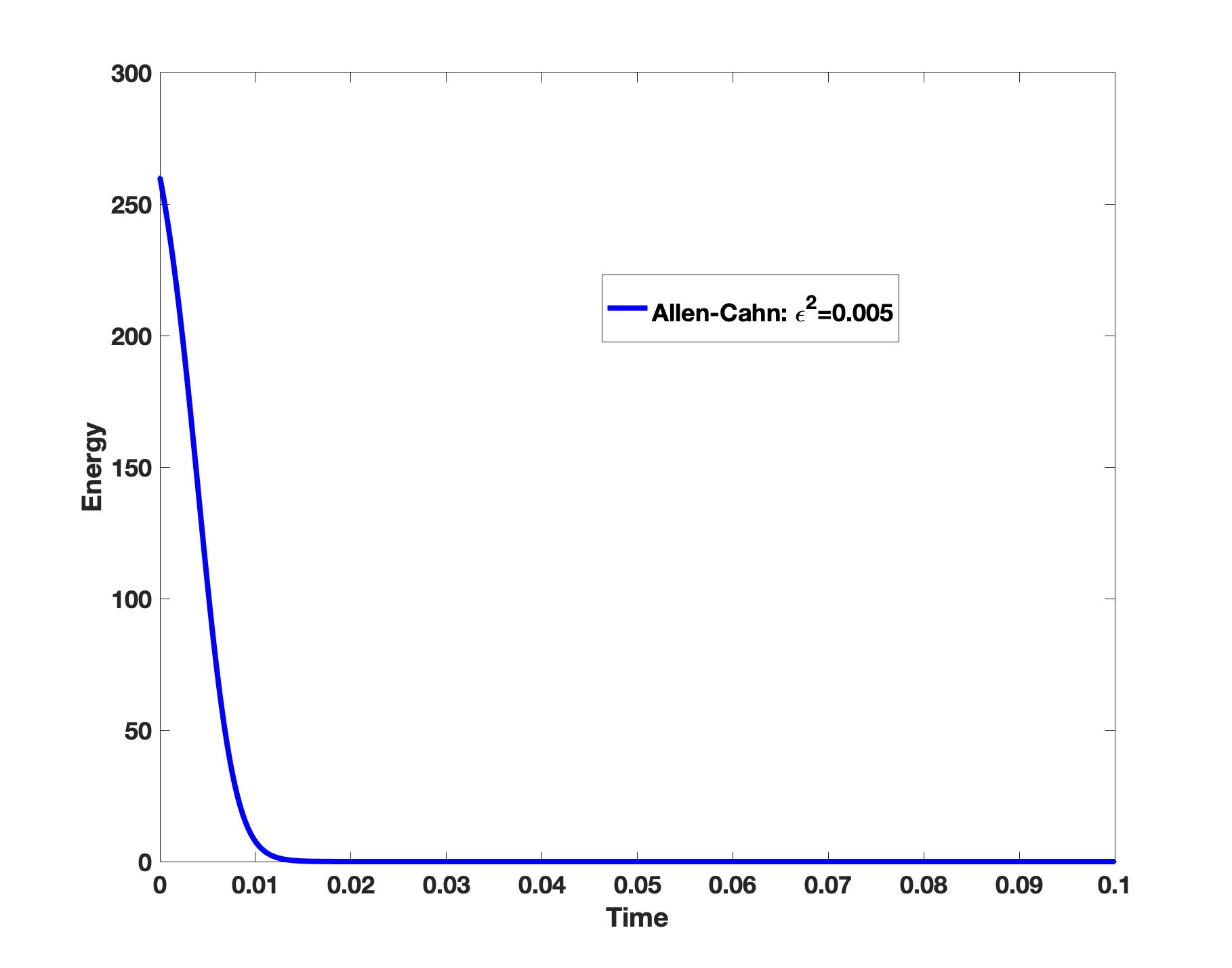}\hskip 0cm
\includegraphics[width=0.5\textwidth,clip==]{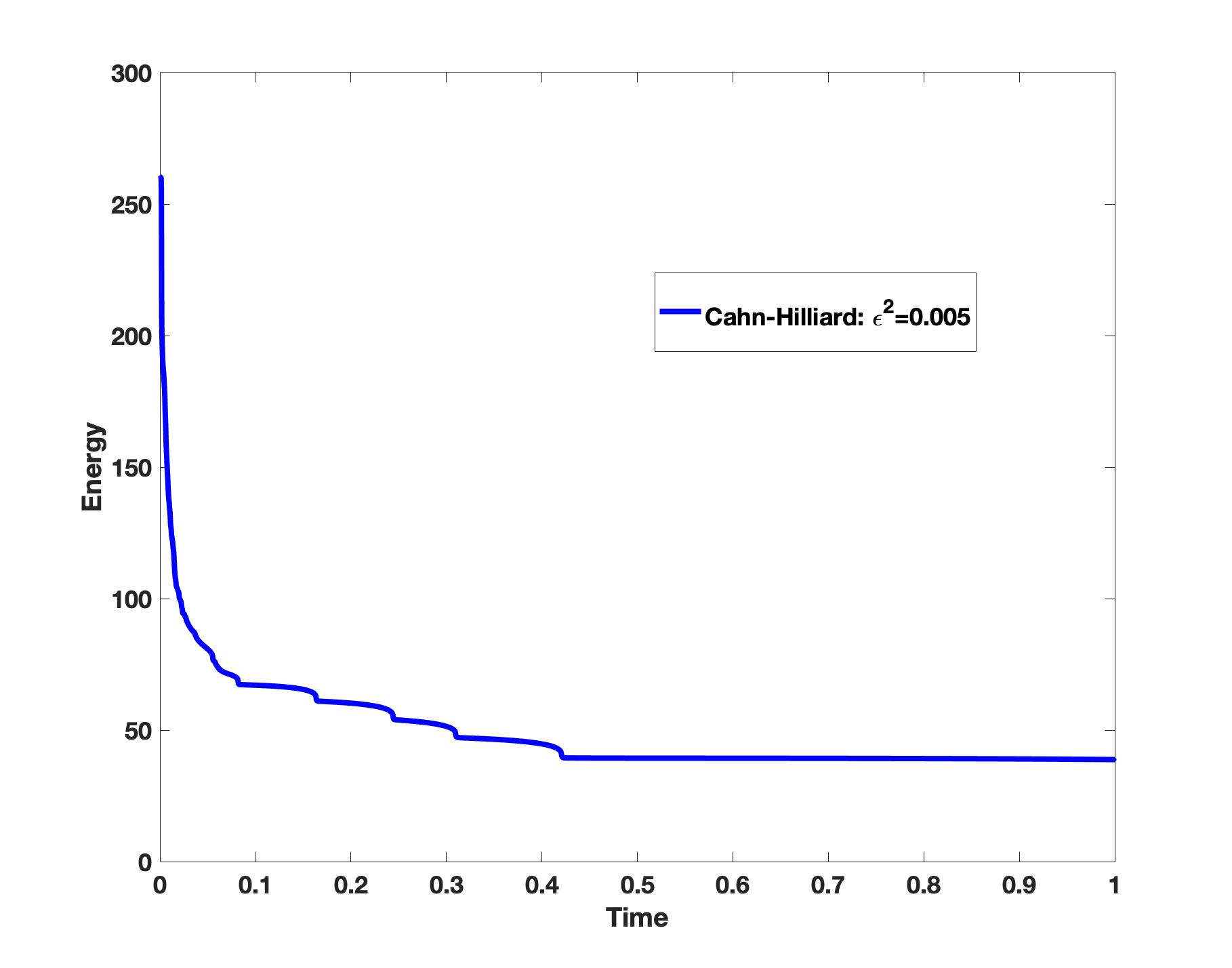}\hskip 0cm
\caption{Energy evolution of Allen-Cahn and Cahn-Hilliard equations  by using Scheme BDF2  of  new Lagrange Multiplier approach with time step $\delta t=10^{-5}$. }\label{Energy_cahn}
\end{figure}

\subsubsection{Molecular beam epitaxial without slope selection}
As an example where the nonlinear functional is unbounded from below, we consider  a model for molecular beam epitaxial (MBE)  without slope selection \cite{villain1991continuum}. For this model,
the total free energy is $E(\phi)=\int_{\Omega}\frac{\eps^2}{2}|\Delta \phi|^2+F(\phi)dx$, where
the nonlinear potential is 
\begin{equation}\label{ori:ene2}
F(\phi)=-\frac{1}{2}\ln(1+|\nabla \phi|^2).
\end{equation}
Since the Logarithmic potential  is not bounded from below, one has to use a different  energy splitting so that the SAV approach can be applied  \cite{cheng2019highly}.  However, our new approach can be directly applied since there is no requirement of boundedness from below.  More precisely, the $L^2$ gradient flow with respect to the free energy above is  
\begin{eqnarray}
&&\phi_t =-M\frac{\delta E(\phi)}{\delta \phi}= -M\big( \epsilon^2\Delta^2 \phi+F'(\phi)\big), \label{MBE:1}
\end{eqnarray}
 with   boundary conditions:
\begin{eqnarray}\label{con:pdefhbd2}
&&(i)\,\,\phi \mbox{ is periodic with or } (ii)\,\,\partial_{\bm n} \ph|_{\partial\Omega}=\partial_{\bm n}\Delta\phi|_{\partial\Omega}=0,
\end{eqnarray}
where $\bf n$ is the unit outward normal on the boundary $\partial\Omega$.
  In the above,  $M$ is a mobility constant, and $F'(\phi)= \Grad \cdot\Big(\frac{\Grad \phi}{1+|\Grad \phi|^2}\big)$.

Then, a second-order scheme based on the new Lagrange Multiplier approach is:
\begin{eqnarray}
&&\frac{3\phi^{n+1}-4\phi^n+\phi^{n-1}}{2\delta t} +M\big( \epsilon^2\Delta^2 \phi^{n+1}+\eta^{n+1} F'( \phi^{\star,n})\big)=0,\label{mbe:bdf1}\\
&&\eta^{n+1}(F'( \phi^{\star,n}), 3\phi^{n+1}-4\phi^n+\phi^{n-1})= (3F(\Grad\phi^{n+1})-4F(\Grad\phi^n)+F(\Grad\phi^{n-1}),1).\label{mbe:bdf2}
\end{eqnarray}
Similarly as in the proof of Theorem \ref{bdf:thm}, we can easily show that the above equation is unconditionally energy stable.
 It is also clear that it can also be efficiently implemented.
 
 We present below  numerical simulations for MBE model by using the new Lagrange Multiplier approach \eqref{mbe:bdf1}-\eqref{mbe:bdf2}.  
 To simulate the coarsening dynamics,  we choose a random initial condition  varying from $-0.001$ to $0.001$ in  $\Omega=[0,2\pi)^2$. The parameters are
\begin{eqnarray}\label{Coarse_pa}
\epsilon= 0.03, \delta t=10^{-2}\;  M=1.
\end{eqnarray}
The time evolution of total energy and the Lagrange Multiplier are depicted in Fig. \ref{MBE_coarse}.  The total free energy  decays with time  while Lagrange Multiplier are almost equal to one except at the few initial time steps.  The  dynamic coarsening process is presented in Fig. \ref{NoSlopeCoarse}.

\begin{figure}[htbp]
\centering
\includegraphics[width=0.45\textwidth,clip==]{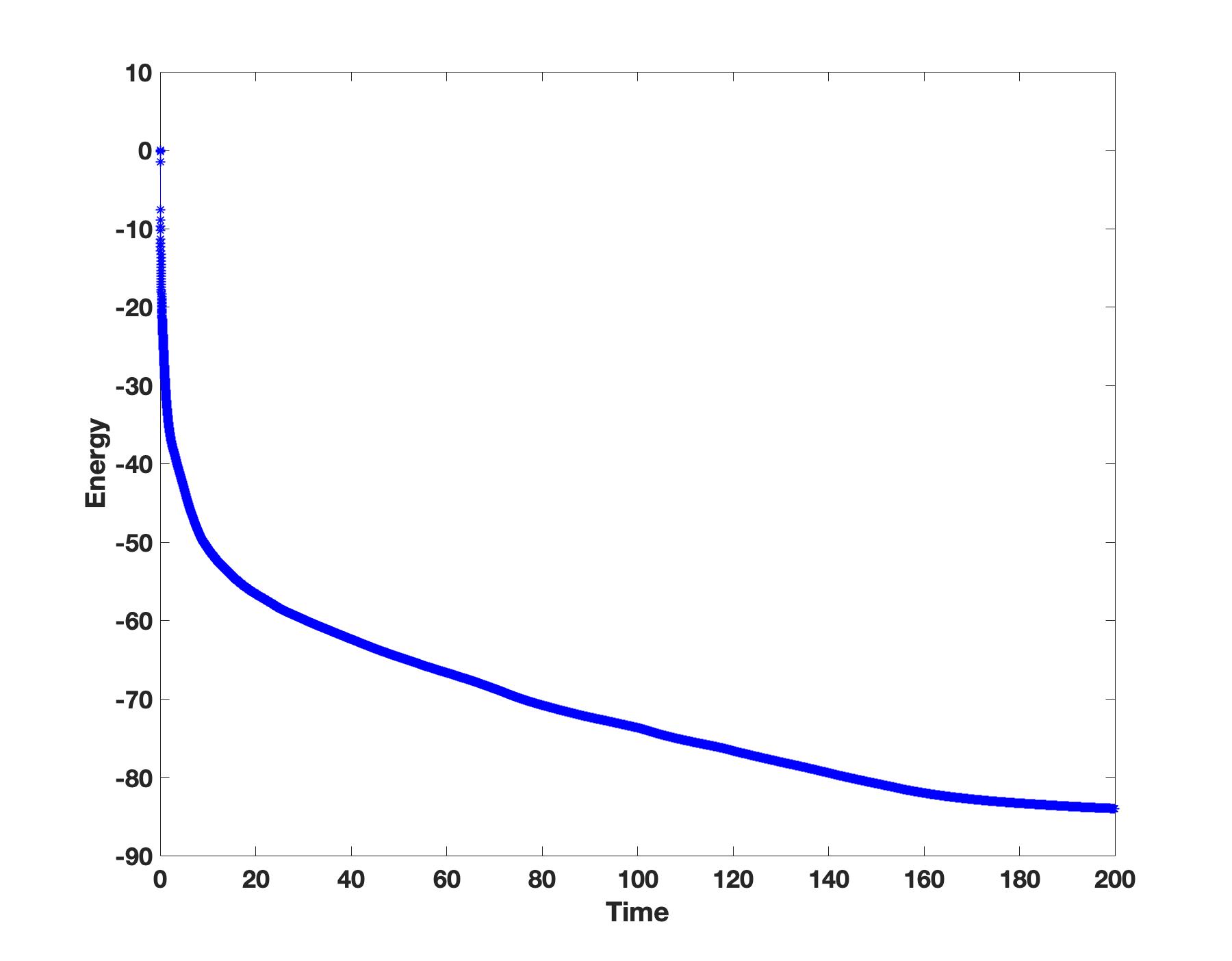}
\includegraphics[width=0.45\textwidth,clip==]{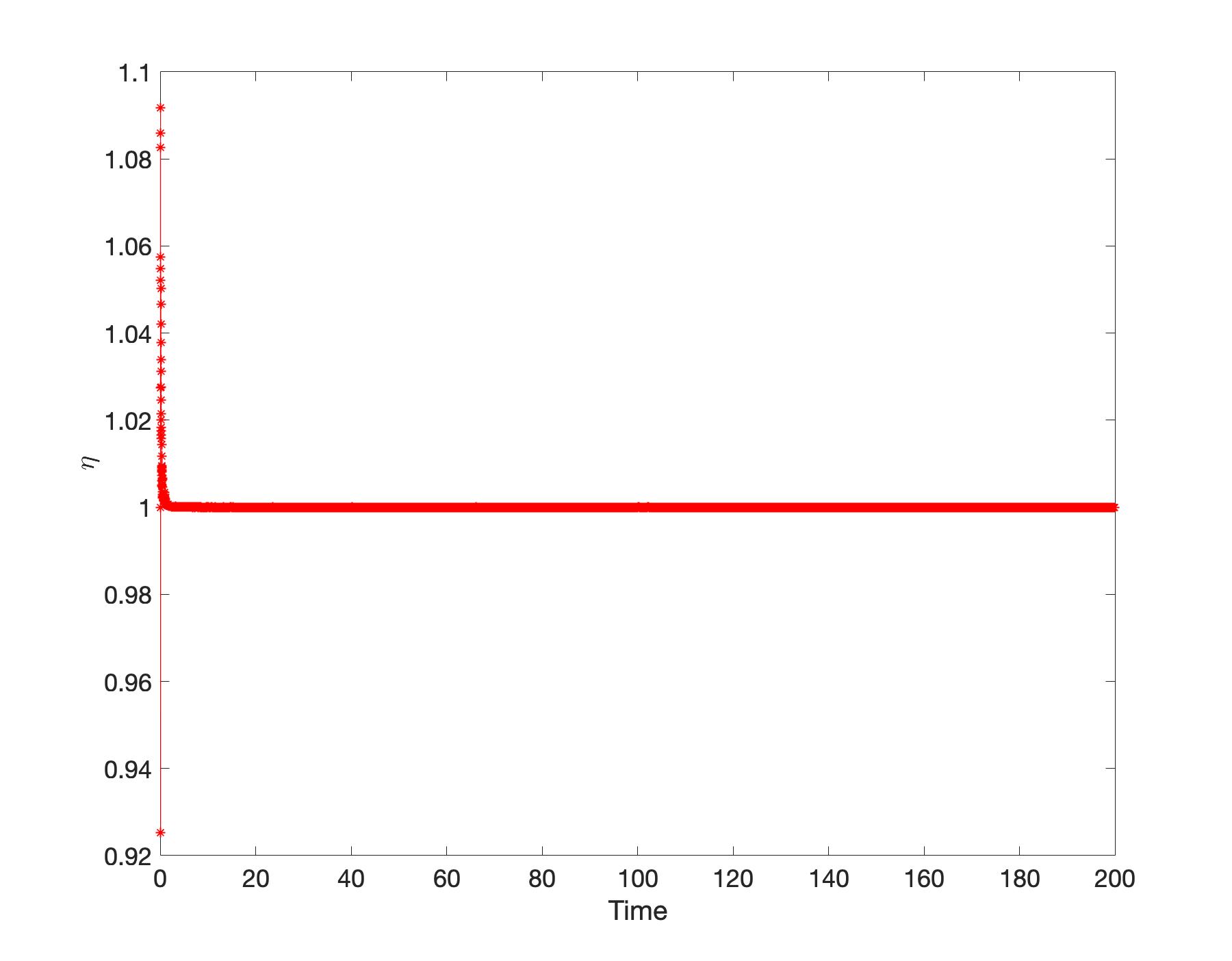}
\caption{The coarse dynamic  evolution of energy and $\eta$ with time for MBE model without slope selection.  }\label{MBE_coarse}
\end{figure}

\begin{figure}
\centering
\subfigure[$t=2.5$.]{
\includegraphics[width=0.23\textwidth,clip==]
{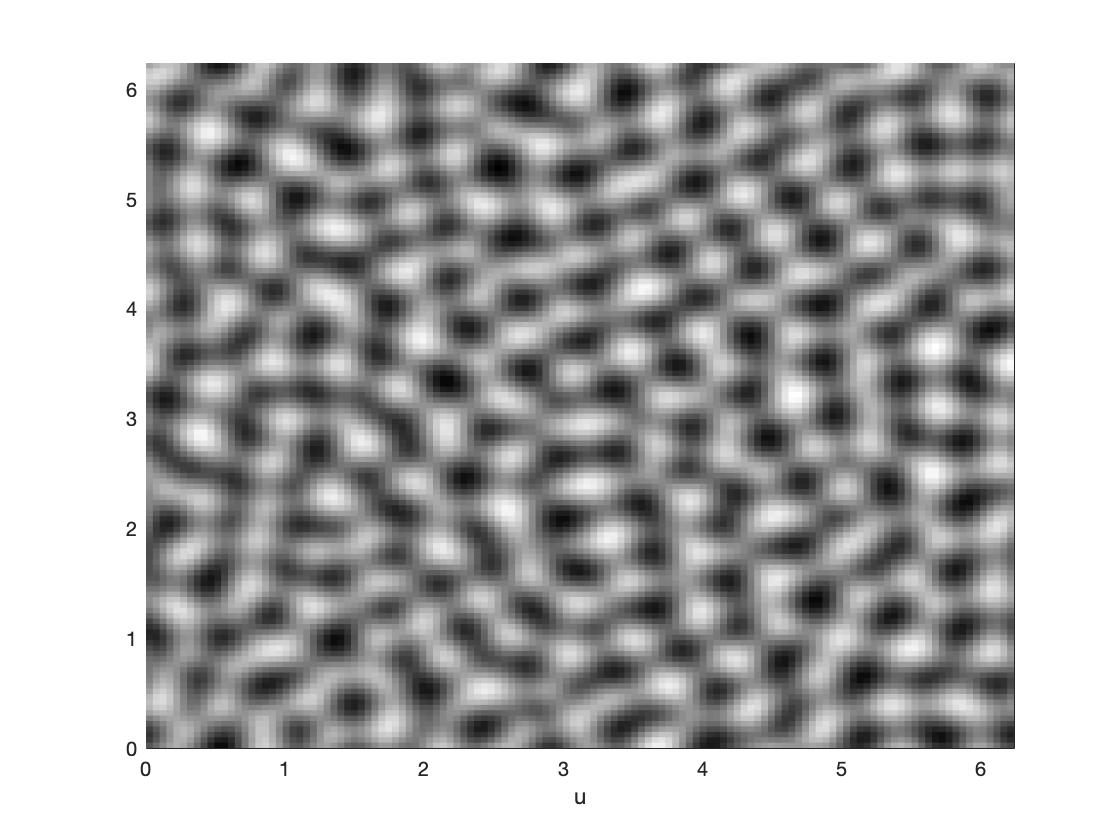}\hskip 0cm
\includegraphics[width=0.23\textwidth,clip==]{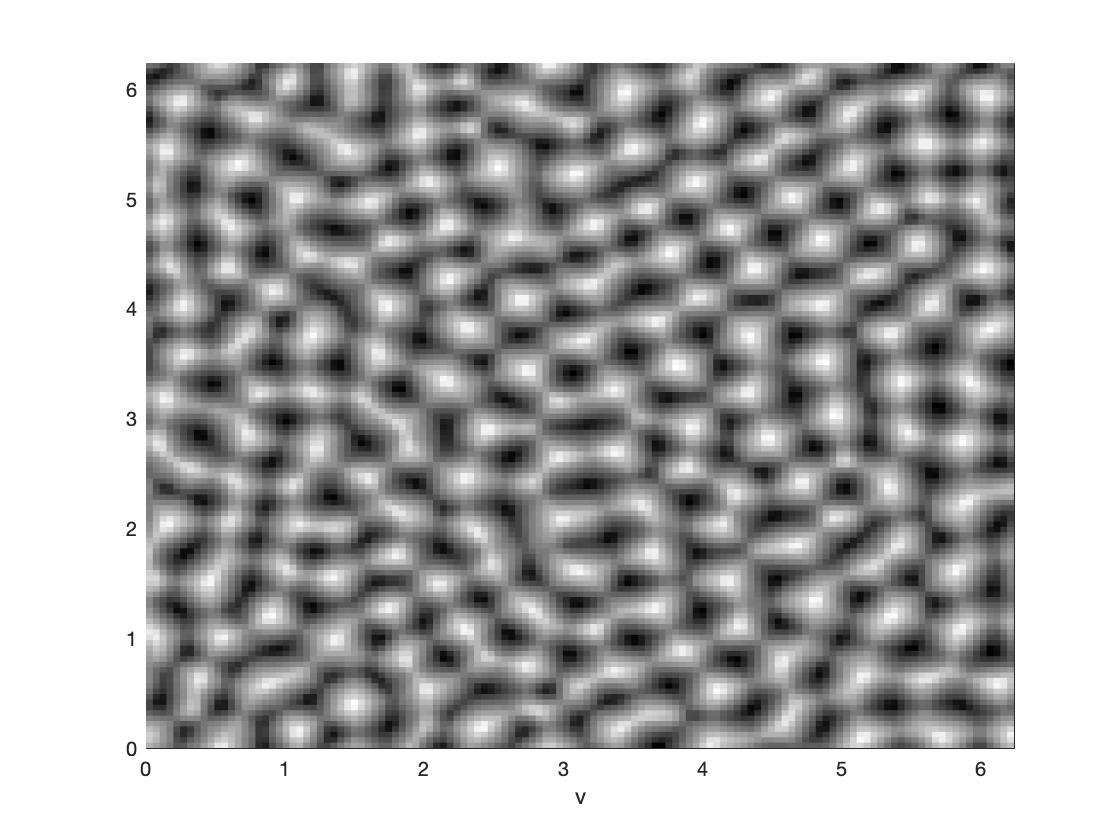}}
\subfigure[$t=5$.]{
\includegraphics[width=0.23\textwidth,clip==]{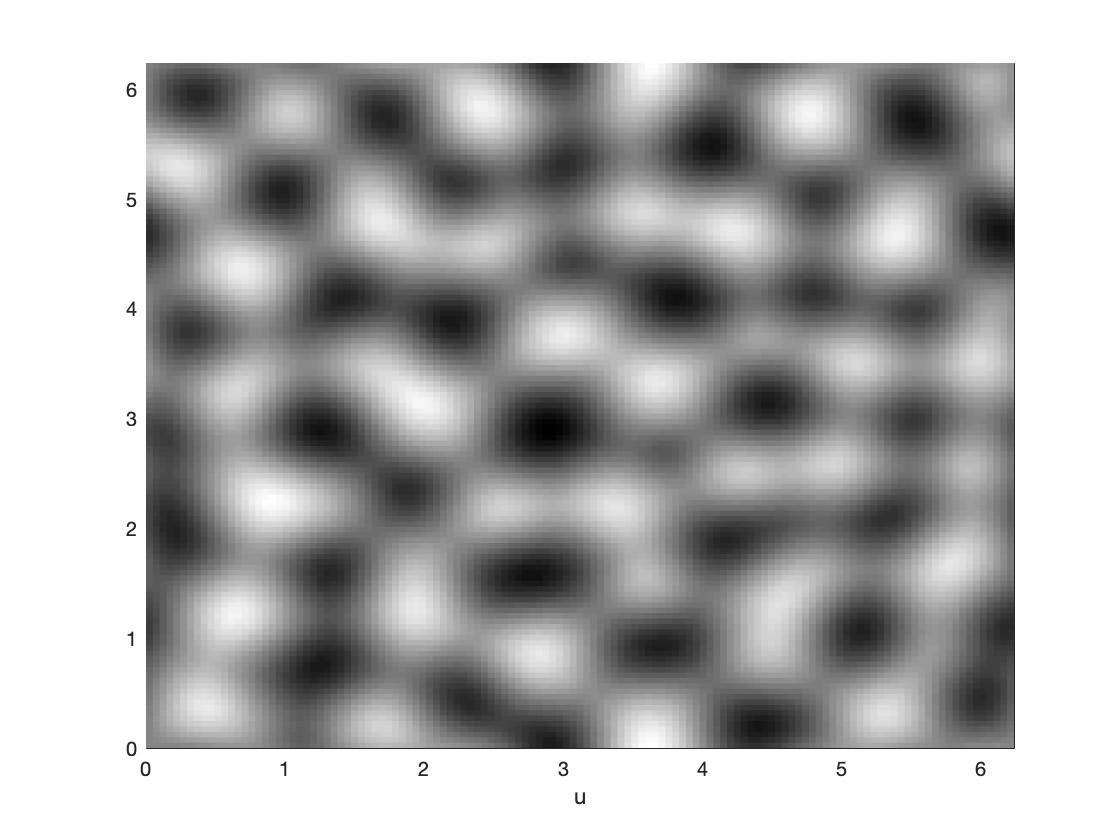}\hskip 0cm
\includegraphics[width=0.23\textwidth,clip==]{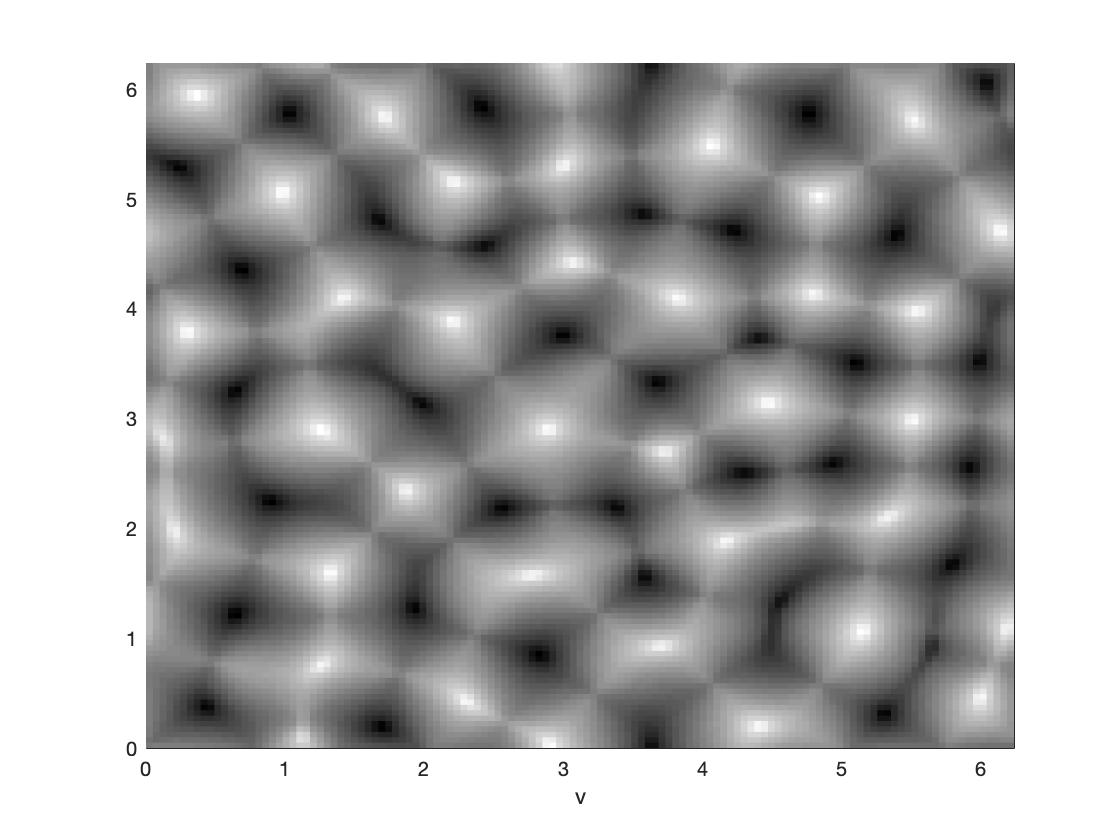}}
\subfigure[$t=10$.]{
\includegraphics[width=0.23\textwidth,clip==]{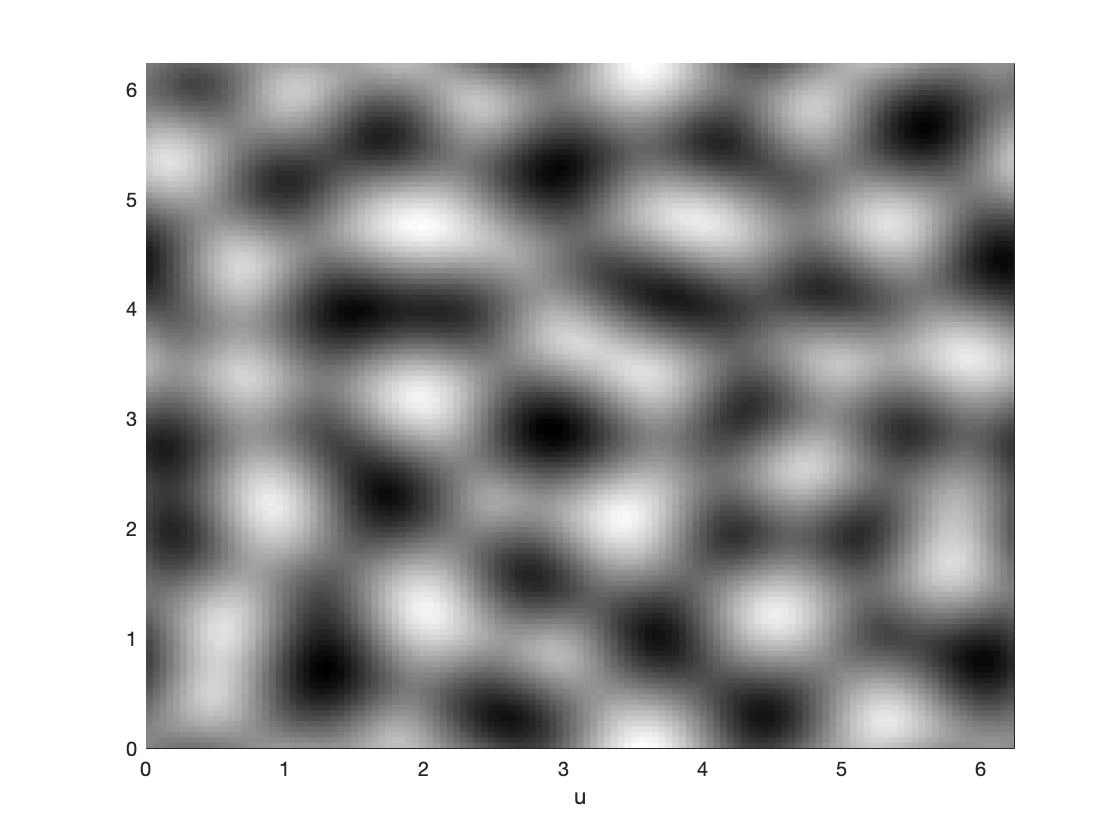}\hskip 0cm
\includegraphics[width=0.23\textwidth,clip==]{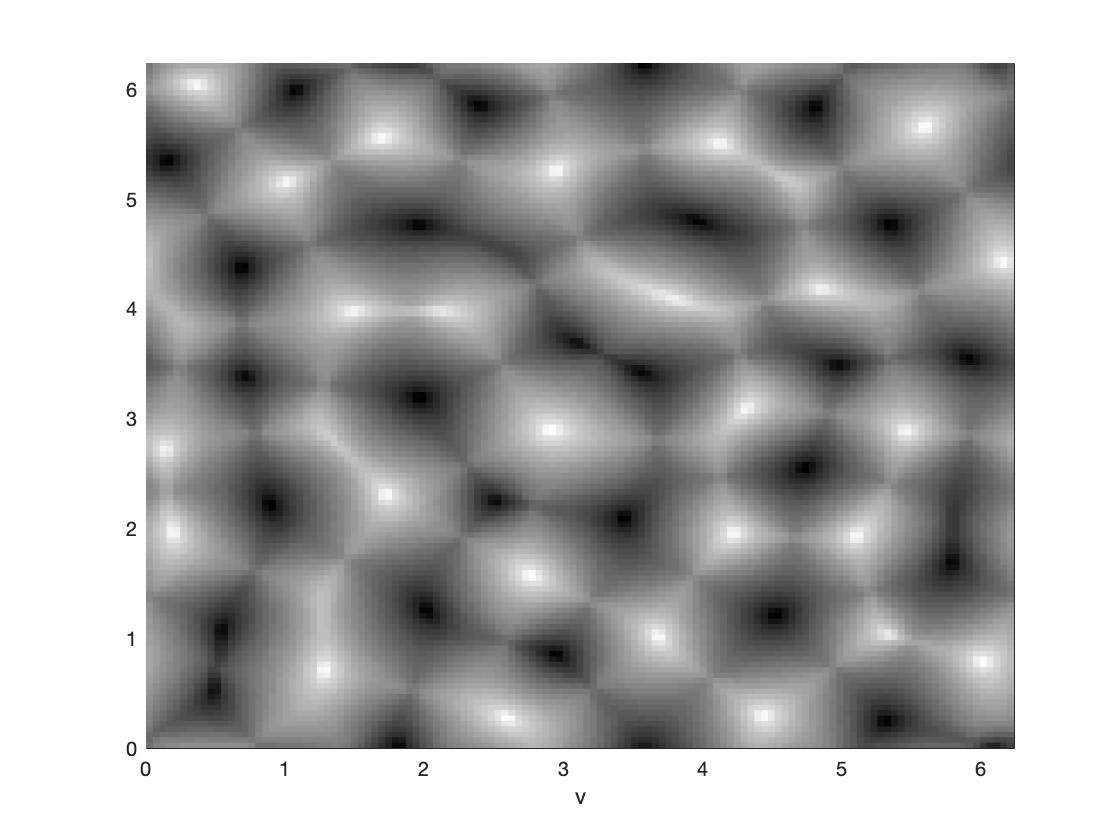}}
\subfigure[$t=30$.]{
\includegraphics[width=0.23\textwidth,clip==]{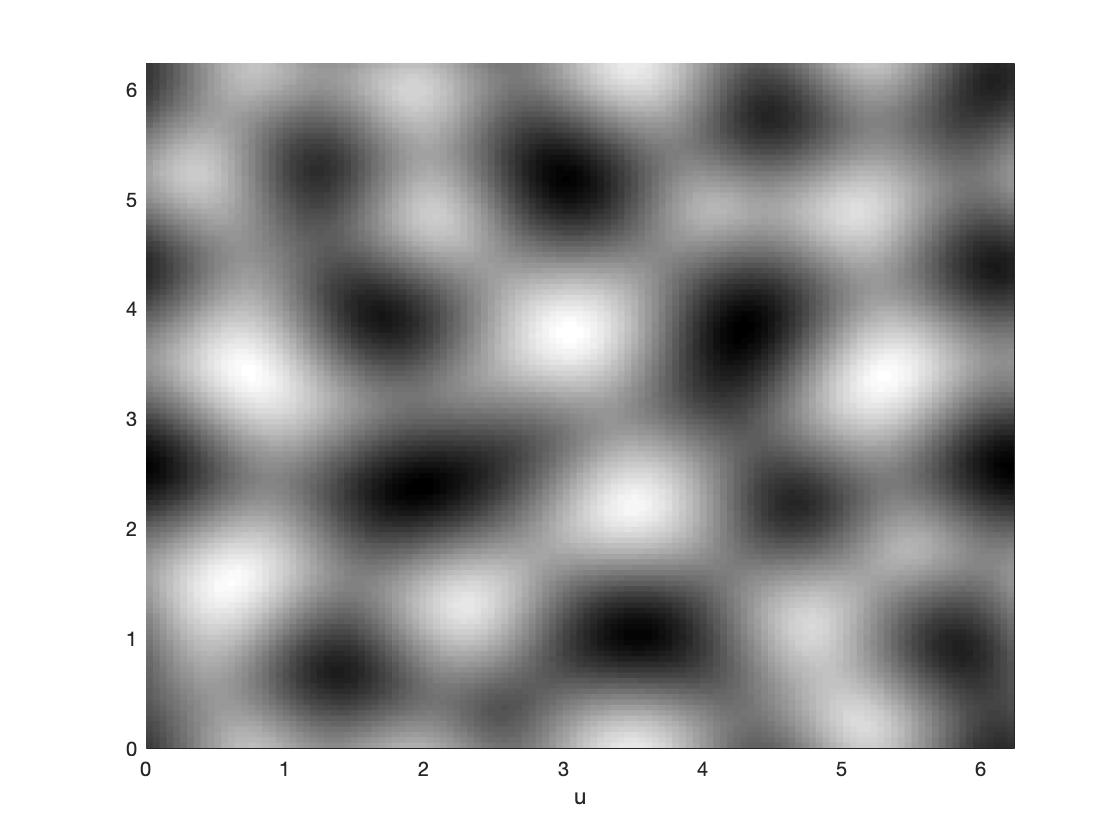}\hskip 0cm
\includegraphics[width=0.23\textwidth,clip==]{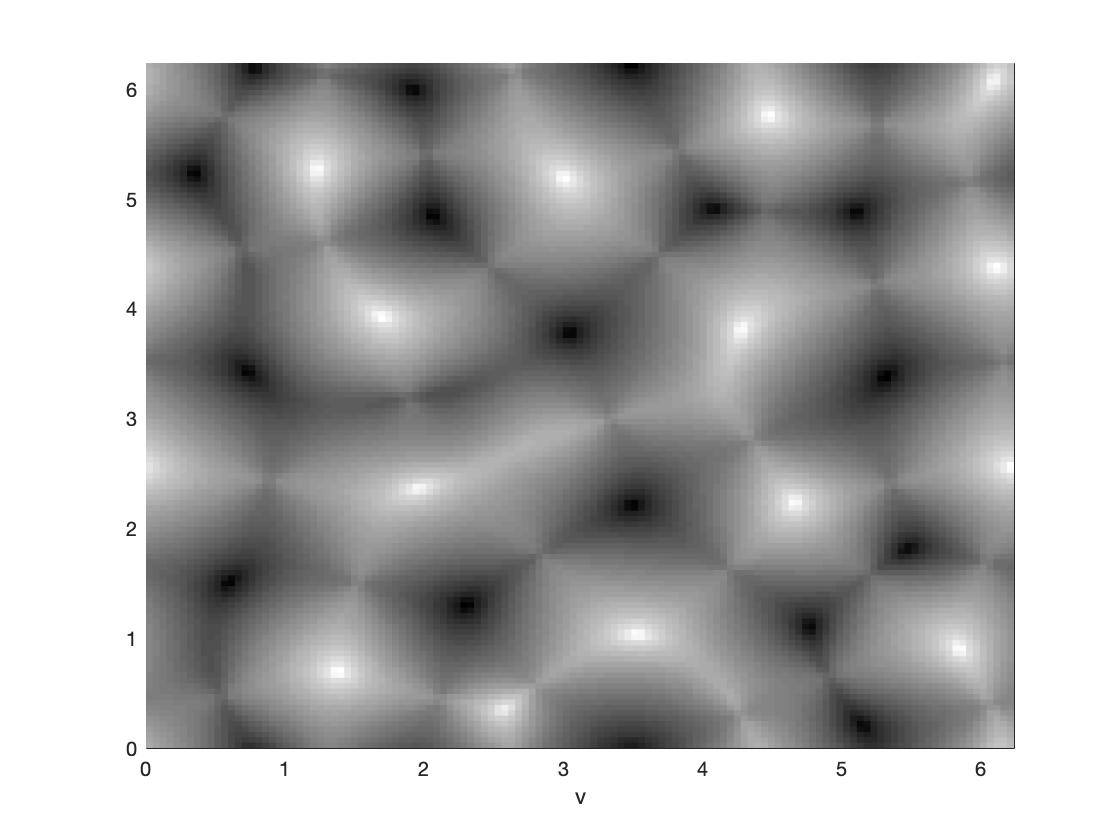}}
\subfigure[$t=50$.]{
\includegraphics[width=0.23\textwidth,clip==]{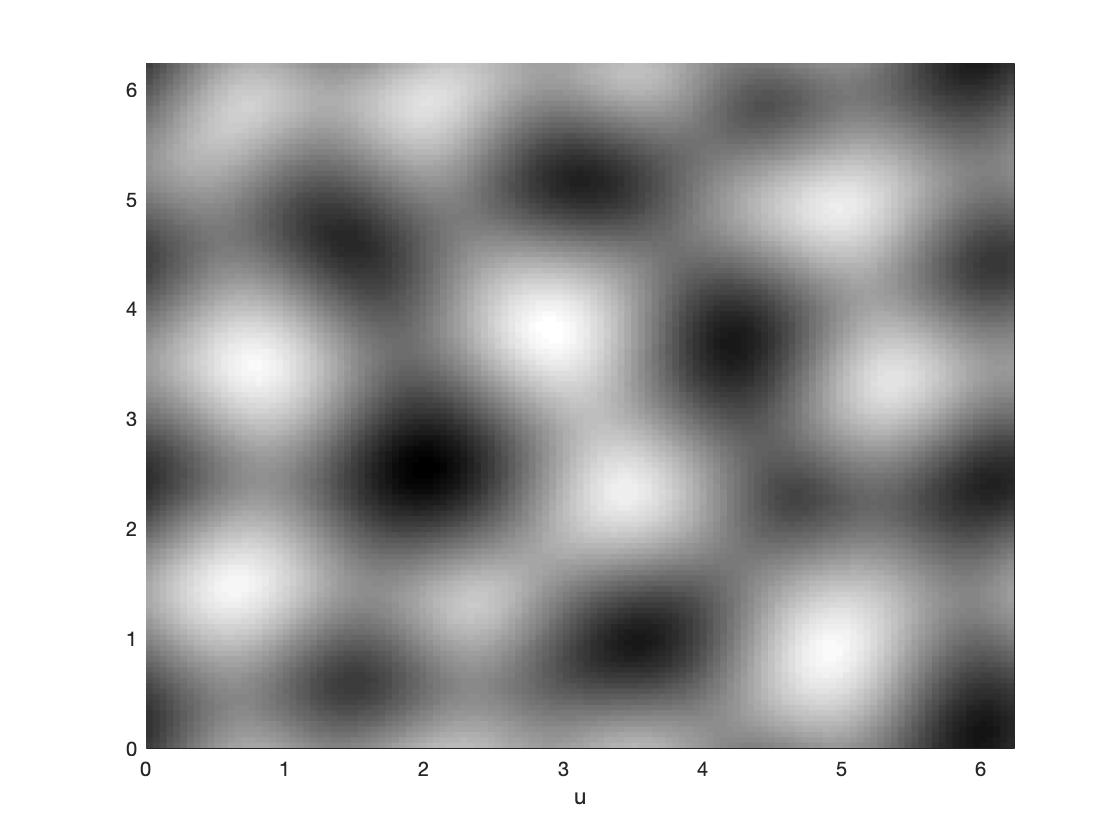}\hskip 0cm
\includegraphics[width=0.23\textwidth,clip==]{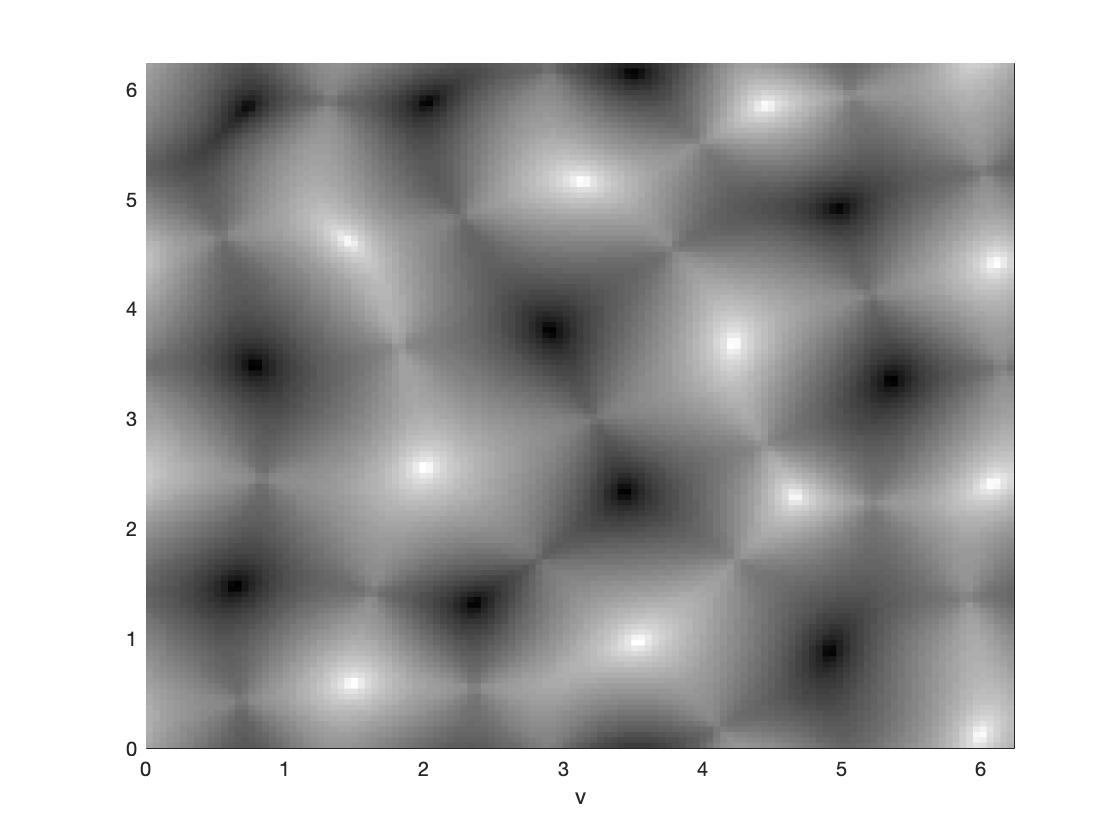}}
\subfigure[$t=200$.]{
\includegraphics[width=0.23\textwidth,clip==]{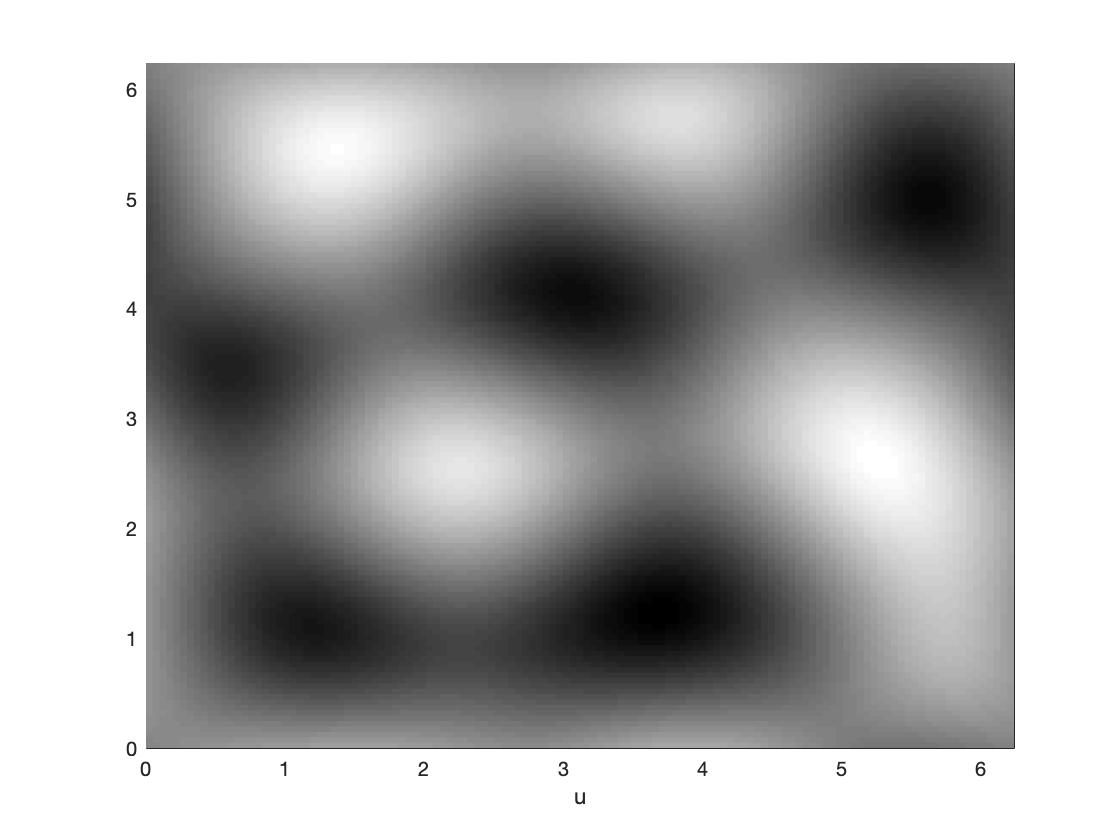}\hskip 0cm
\includegraphics[width=0.23\textwidth,clip==]{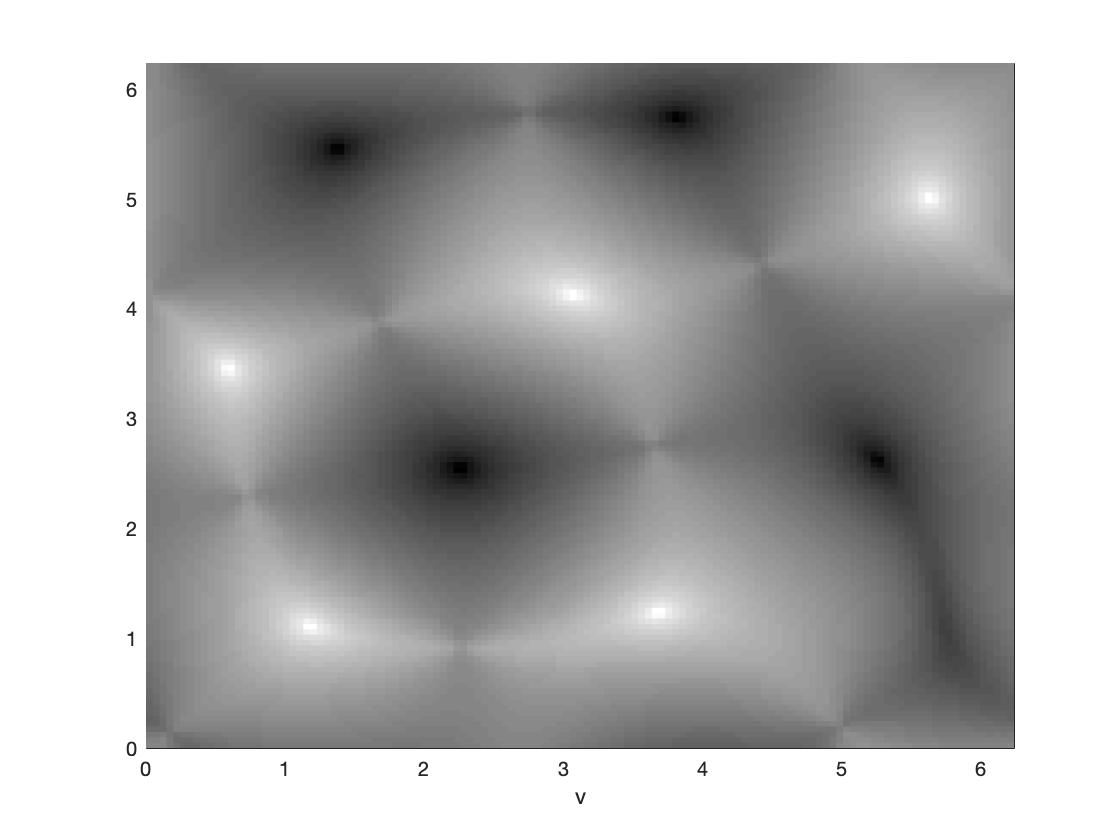}}
\caption{The isolines of the numerical solutions of the height function $\phi$ and its Laplacian $\Delta \phi$ for the model without slope selection with random initial condition. For each subfigure, the left is $\phi$ and the right is $\Delta \phi$ . Snapshots are taken at $t = 2.5 , 5 , 10 , 30 , 50 , 200$, respectively.}\label{NoSlopeCoarse}
\end{figure}

\section{Gradient flows  with multiple components}
As the SAV approach, the new Lagrange multiplier approach can be applied to  gradient flows with multiple components.
As a particular  example, we consider the following
 coupled non-local Cahn-Hilliard type system:
\begin{eqnarray}
&&u_t=M_u\Delta\mu_u, \label{ch:1}\\
&&\mu_u=-\eps_u^2\Delta u +(u^2-1)u + \alpha v+\beta v^2+2\gamma uv,\label{ch:2}\\
&&v_t=M_v\Delta\mu_v,\label{ch:3} \\
&&\mu_v=-\eps_v^2\Delta v +(v^2-1)v + \alpha u+2\beta uv+\gamma u^2-\sigma\Delta^{-1}(v-\overline v)\label{ch:4},
\end{eqnarray}
with boundary conditions 
\begin{eqnarray}
&&(i)\mbox{ periodic; or } (ii)\,\,\partial_{\bf n} u|_{\partial\Omega}=\partial_{\bf n}\Delta u|_{\partial\Omega}=
\partial_{\bf n} v|_{\partial\Omega}=\partial_{\bf n}\Delta v|_{\partial\Omega}=0.
\label{ori:pdefhbd3}
\end{eqnarray}

 The above system is introduced in \cite{avalos2016frustrated} to   model a blend of homopolymer and copolymer components. We defer the physical description about the system to Section \ref{BCP}, and consider how to apply the new Lagrange Multiplier approach to solve it efficiently.
 
 The system
\eqref{ch:1}-\eqref{ch:4} can be interpreted as a gradient flow as follows
\begin{eqnarray}
&&u_t=M_u\Delta\frac{\delta E(u,v)}{\delta u},  \label{ch2:1a}\\
&&v_t=M_v\Delta \frac{\delta E(u,v)}{\delta v}
,\label{ch2:3a} 
\end{eqnarray}
 with
 the  total free energy
 \begin{equation}
 E(u,v)=\int_{\Omega}\frac{\eps^2_u}{2}|\Grad u|^2+\frac{\eps^2_v}{2}|\Grad v|^2+W(u,v)+\frac{\sigma}{2}|(-\Delta)^{-\frac 12}(v-\overline v)|^2d\bx,
 \end{equation}
  where
  \begin{equation}\label{wuv}
W(u,v)=\frac{(u^2-1)^2}{4}+\frac{(v^2-1)^2}{4}+\alpha uv+\beta uv^2+\gamma u^2v.
\end{equation}
Indeed, one can easily check that 
$$\frac{\delta E(u,v)}{\delta u}=-\eps_u^2\Delta u +\frac{\delta W}{\delta u}=\mu_u,\quad \frac{\delta E(u,v)}{\delta v}=-\eps_u^2\Delta v +\frac{\delta W}{\delta v}-\sigma\Delta^{-1}(v-\overline v)=\mu_v.$$
\subsection{New schemes based on the  Lagrange multiplier approach}
 
We introduce a Lagrange multiplier $\eta(t)$ and rewrite \eqref{ch2:1a}-\eqref{ch2:3a} as
\begin{eqnarray}
&&u_t=M_u\Delta\mu_u, \label{ch2:1}\\
&&\mu_u=-\eps_u^2\Delta u+\frac{\delta W}{\delta u} \eta(t) ,\label{ch2:2}\\
&&v_t=M_v\Delta\mu_v
,\label{ch2:3} \\
&&\mu_v=-\eps_u^2\Delta v +\frac{\delta W}{\delta v}\eta(t)-\sigma\Delta^{-1}(v-\overline v),\label{ch2:4}\\
&&\frac{d}{dt}\int_{\Omega}W(u,v) d\bx=\eta(t) \int_\Omega (\frac{\delta W}{\delta u} u_t+\frac{\delta W}{\delta v}v_t )d\bx\label{ch2:5}.
\end{eqnarray}
Taking the inner products of \eqref{ch2:1}-\eqref{ch2:4} with $\mu_u$, $u_t$, $\mu_v$, $v_t$, respectively, summing up the results along with \eqref{ch2:5}, we obtain the energy dissipation law
\begin{equation*}
\frac d{dt}  E(u,v)=-M_u(\nabla \mu_u, \nabla \mu_u)-M_v(\nabla \mu_v, \nabla \mu_v). 
\end{equation*}

As in the previous section, we can construct a second-order scheme for the above system  as follows:
assuming that $u^{n-1}$, $u^{n}$ and $v^{n-1}$, $v^{n}$ are known, we find
$u^{n+1}$ and  $v^{n+1}$ as follows:
\begin{eqnarray}
&&\frac{3u^{n+1}-4u^n+u^{n-1}}{2\delta t}=M_u\Delta\mu^{n+1}_u, \label{ch:bdf2:1}\\
&&\mu^{n+1}_u=-\eps_u^2\Delta u^{n+1} +(\frac{\delta W}{\delta u})^{\star,n}\eta^{n+1},\label{ch:bdf2:2}\\
&&\frac{3v^{n+1}-4v^n+v^{n-1}}{2\delta t}=M_v\Delta\mu^{n+1}_v,\label{ch:bdf2:3} \\
&&\mu^{n+1}_v=-\eps_v^2\Delta v^{n+1} +(\frac{\delta W}{\delta v})^{\star,n}\eta^{n+1}-\sigma\Delta^{-1}(v^{n+1}-\overline v)\label{ch:bdf2:4},\\
&& (3W(u^{n+1},v^{n+1})-4W(u^n,v^n)+W(u^{n-1},v^{n-1}),1)\label{ch:bdf2:5}\\
&&\hskip 1cm = \eta^{n+1}\{((\frac{\delta W}{\delta u})^{\star,n},3u^{n+1}-4u^n+u^{n-1})+((\frac{\delta W}{\delta v})^{\star,n},3v^{n+1}-4v^n+v^{n-1})\},\nonumber
\end{eqnarray}
where $f^{\star,n}=2f^n-f^{n-1}$ for any function $f$, and the boundary conditions are given by \eqref{ori:pdefhbd3}. 

\begin{thm}
The numerical scheme \eqref{ch:bdf2:1}-\eqref{ch:bdf2:5} is unconditional energy stable  in the sense that 
\begin{eqnarray}\label{bdf2:stab}
\frac{E^{n+1}-E^n}{\delta t}\le -(M_u\|\Grad\mu_u^{n+1}\|+M_v\|\Grad\mu^{n+1}_v\|^2),
\end{eqnarray}
where
\begin{equation*}
\begin{split} 
E^{n+1}=&\int_{\Omega}\frac{\eps^2_u}{2}|\Grad u^{n+1}|^2+\frac{\eps^2_v}{2}|\Grad v^{n+1}|2+\frac 12\big(3W(u^{n+1},v^{n+1})-W(u^{n},v^{n})\big)\\
&+\frac{\sigma}{2}|(-\Delta)^{-\frac 12}(v^{n+1}-\overline v^{n+1})|^2d\bx.
\end{split}
\end{equation*}
\end{thm}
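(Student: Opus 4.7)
The plan is to mirror the argument of Theorem \ref{bdf:thm} componentwise on $u$ and $v$, while handling the nonlocal $\sigma$-term separately. First I would take the $L^2$ inner products of \eqref{ch:bdf2:1} with $\mu_u^{n+1}$ and of \eqref{ch:bdf2:3} with $\mu_v^{n+1}$; integration by parts using the boundary conditions \eqref{ori:pdefhbd3} converts these into the dissipation contributions $-M_u\|\nabla\mu_u^{n+1}\|^2$ and $-M_v\|\nabla\mu_v^{n+1}\|^2$ paired with $\tfrac{1}{2\delta t}(3u^{n+1}-4u^n+u^{n-1},\mu_u^{n+1})$ and its $v$-analogue.

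Next I would take the inner products of \eqref{ch:bdf2:2} with $3u^{n+1}-4u^n+u^{n-1}$ and of \eqref{ch:bdf2:4} with $3v^{n+1}-4v^n+v^{n-1}$, divided by $2\delta t$. For the $-\eps_u^2\Delta u^{n+1}$ and $-\eps_v^2\Delta v^{n+1}$ terms I integrate by parts and invoke the BDF2 polarization identity \eqref{iden1} with $a=\nabla u^{n+1}$, $b=\nabla u^n$, $c=\nabla u^{n-1}$ (and analogously for $v$). This reproduces the telescoping increments of the gradient portion of $E^{n+1}-E^n$ plus nonnegative quadratic remainders of the form $\tfrac{\eps^2}{4}\|\nabla(u^{n+1}-2u^n+u^{n-1})\|^2$ that can be discarded to obtain the stated inequality.

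The main obstacle is the nonlocal term $-\sigma\Delta^{-1}(v^{n+1}-\overline v)$ in \eqref{ch:bdf2:4}. I would first establish mass conservation: integrating \eqref{ch:bdf2:3} against the constant function $1$ under \eqref{ori:pdefhbd3} yields $\overline v^{n+1}=\overline v^n$ for every $n$, so $v^k-\overline v^k$ is mean zero and $(-\Delta)^{-1}$ is well defined on it. By self-adjointness of $(-\Delta)^{-1/2}$,
\begin{equation*}
-\sigma\bigl(\Delta^{-1}(v^{n+1}-\overline v),\,3v^{n+1}-4v^n+v^{n-1}\bigr)=\sigma\bigl((-\Delta)^{-\frac 12}(v^{n+1}-\overline v),\,(-\Delta)^{-\frac 12}(3v^{n+1}-4v^n+v^{n-1})\bigr),
\end{equation*}
and a second application of \eqref{iden1} with $a=(-\Delta)^{-1/2}(v^{n+1}-\overline v)$, $b=(-\Delta)^{-1/2}(v^n-\overline v)$, $c=(-\Delta)^{-1/2}(v^{n-1}-\overline v)$ produces the telescoping increment $\tfrac{\sigma}{2}\|(-\Delta)^{-\frac 12}(v^{n+1}-\overline v^{n+1})\|^2$ plus a nonnegative remainder.

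Finally I would combine all the preceding identities with \eqref{ch:bdf2:5} divided by $2\delta t$: the $\eta^{n+1}$-weighted terms coming from the nonlinear parts of $\mu_u^{n+1}$ and $\mu_v^{n+1}$ cancel exactly against the right-hand side of \eqref{ch:bdf2:5}, and the algebraic identity $3W^{n+1}-4W^n+W^{n-1}=(3W^{n+1}-W^n)-(3W^n-W^{n-1})$ supplies the telescoping $W$-contribution $\tfrac{1}{2}(3W(u^{n+1},v^{n+1})-W(u^n,v^n))$ appearing in $E^{n+1}$. Dropping the remaining nonnegative quadratic terms yields \eqref{bdf2:stab}.
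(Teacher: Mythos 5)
Your proposal follows essentially the same route as the paper's proof: test \eqref{ch:bdf2:1} and \eqref{ch:bdf2:3} with $\mu_u^{n+1}$, $\mu_v^{n+1}$, test \eqref{ch:bdf2:2} and \eqref{ch:bdf2:4} with the BDF2 differences, apply the identity \eqref{iden1} to telescope the quadratic parts, and cancel the $\eta^{n+1}$ terms against \eqref{ch:bdf2:5}. In fact you are slightly more careful than the paper, which silently drops the nonlocal $-\sigma\Delta^{-1}(v^{n+1}-\overline v)$ contribution from its displayed identities, whereas you justify it via mass conservation and self-adjointness of $(-\Delta)^{-1/2}$.
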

\begin{proof}
Taking the inner products of  \eqref{ch:bdf2:1} and \eqref{ch:bdf2:3} with $\mu_u^{n+1}$ and $\mu_v^{n+1}$, respectively, we find
\begin{eqnarray}
&&(\frac{3u^{n+1}-4u^n+u^{n-1}}{2\delta t}, \mu_{u}^{n+1})=-M_u\|\Grad\mu_u^{n+1}\|^2,\\
&&(\frac{3v^{n+1}-4v^n+v^{n-1}}{2\delta t}, \mu_{v}^{n+1})=-M_v\|\Grad\mu_v^{n+1}\|^2.
\end{eqnarray}
Taking the inner products of  \eqref{ch:bdf2:2}  and \eqref{ch:bdf2:4} with $3u^{n+1}-4u^n+u^{n-1}$ and  $3v^{n+1}-4v^n+v^{n-1}$, respectively, and using the identity \eqref{iden1}, we find
\begin{eqnarray*}\label{bdf2:pr:1}
\begin{aligned}
&(\mu_u^{n+1},3u^{n+1}-4u^n+u^{n-1})=\frac{\eps^2_u}{2}\{(\|\Grad u^{n+1}\|^2-\|2\Grad u^{n+1}-\Grad u^n\|^2)\\&-(\|\Grad u^{n}\|^2-\|2\Grad u^{n}-\Grad u^{n-1}\|^2)+
\|\Grad u^{n+1}-2\Grad u^n+\Grad u^{n-1}\|^2\}\\&
+((\frac{\delta W}{\delta u})^{\star,n}\eta^{n+1},3u^{n+1}-4u^n+u^{n-1}),
\end{aligned}
\end{eqnarray*}
and
\begin{eqnarray*}\label{bdf:pr:2}
\begin{aligned}
&(\mu_v^{n+1},3v^{n+1}-4v^n+v^{n-1})=\frac{\eps^2_v}{2}\{(\|\Grad v^{n+1}\|^2-\|2\Grad v^{n+1}-\Grad v^n\|^2)\\&-(\|\Grad v^{n}\|^2-\|2\Grad v^{n}-\Grad v^{n-1}\|^2)+
\|\Grad v^{n+1}-2\Grad v^n+\Grad v^{n-1}\|^2\}\\&
+((\frac{\delta W}{\delta v})^{\star,n}\eta^{n+1},3v^{n+1}-4v^n+v^{n-1}).
\end{aligned}
\end{eqnarray*}
Combing the above relations  and using \eqref{ch:bdf2:5},
we obtain the desired result.
\end{proof}

Next,  we describe how to solve this coupled system \eqref{ch:bdf2:1}-\eqref{ch:bdf2:5}  efficiently. We first eliminate $\mu_u$ and $\mu_v$ to  rewrite equations  \eqref{ch:bdf2:1}-\eqref{ch:bdf2:2} and  \eqref{ch:bdf2:3}-\eqref{ch:bdf2:4} as
\begin{eqnarray}\label{solu:u}
\frac{3u^{n+1}}{2\delta t}+M_u\eps_u^2\Delta^2 u^{n+1} =\frac{4u^{n}-u^{n-1}}{2\delta t}+M_u\Delta(\frac{\delta W}{\delta u})^{\star,n}\eta^{n+1},
\end{eqnarray}
\begin{eqnarray}\label{solu:v}
\frac{3v^{n+1}}{2\delta t}+M_v\eps_v^2\Delta^2 v^{n+1} =\frac{4v^{n}-v^{n-1}}{2\delta t}+M_v\Delta(\frac{\delta W}{\delta v})^{\star,n}\eta^{n+1}.
\end{eqnarray}
Define two linear operators 
 $$\chi_uu:=(\frac{3}{2\delta t}+M_u\eps_u^2\Delta^2) u,\quad \chi_vv:=(\frac{3}{2\delta t}+M_v\eps_v^2\Delta^2) v,$$
 with boundary conditions specified in \eqref{ori:pdefhbd3}, and 
apply $\chi_u^{-1}$ and $\chi_v^{-1}$ to  equations \eqref{solu:u} and \eqref{solu:v}, respectively,   we obtain 
\begin{equation}\label{u:update}
\begin{split}
u^{n+1}&=\chi_u^{-1}\{\frac{4u^{n}-u^{n-1}}{2\delta t}+M_u\Delta(\frac{\delta W}{\delta u})^{\star,n}\eta^{n+1}\}.
\\&:=p_u^n+\eta^{n+1}q_u^n,
\end{split}
\end{equation}
\begin{equation}\label{v:update}
\begin{split}
v^{n+1}&=\chi_v^{-1}\{\frac{4v^{n}-v^{n-1}}{2\delta t}+M_v\Delta(\frac{\delta W}{\delta v})^{\star,n}\eta^{n+1}\}
\\&:=p_v^n+\eta^{n+1}q_v^n.
\end{split}
\end{equation}
where
\begin{equation}\label{pu_qu}
\begin{split}
& p_u^n=\chi_u^{-1}\{\frac{4u^{n}-u^{n-1}}{2\delta t}\}, \quad q_u^n=\chi_u^{-1}\{M_u\Delta(\frac{\delta W}{\delta u})^{\star,n}\},\\
& p_v^n=\chi_v^{-1}\{\frac{4v^{n}-v^{n-1}}{2\delta t}\}, \quad q_v^n=\chi_v^{-1}\{M_v\Delta(\frac{\delta W}{\delta v})^{\star,n}\}.
\end{split}
\end{equation}
Then  plugging $u^{n+1}$ and $v^{n+1}$ in \eqref{u:update}-\eqref{v:update} into \eqref{ch:bdf2:5},    we obtain a nonlinear algebraic equation for $\eta^{n+1}$:
\begin{equation}\label{nonlinear:system}
\begin{split}
&(\frac{\delta W}{\delta u})^{\star,n}\eta^{n+1},3(p_u^n+\eta^{n+1}q_u^n))+((\frac{\delta W}{\delta v})^{\star,n}\eta^{n+1},3(p_v^n+\eta^{n+1}q_v^n))\\&-(3W(p_u^n+\eta^{n+1}q_u^n,p_v^n+\eta^{n+1}q_v^n),1)=g^{\star,n},
\end{split}
\end{equation}
where  $g^{\star,n}$ represents all  known explicit terms.

Since $W(u,v)$ is a polynomial of degree four for $u$ and $v$,  we find that \eqref{nonlinear:system} is  a nonlinear algebraic equation of degree four for $\eta^{n+1}$, which can be easily solved by using  a Newton iteration with $1$ as the initial condition. To summarize, 
the new Lagrange multiplier algorithm for the  coupled Cahn-Hilliard system consists of the following steps:

\begin{description}
 \item  [Step 1]   Compute $p_u^n$, $q_u^n$, $p_v^{n}$ and $q_v^n$ from  \eqref{pu_qu};         
  \item [Step 2]  Solve equation \eqref{nonlinear:system}  to obtain $\eta^{n+1}$;
  \item [Step 3]  Update $u^{n+1}$ and $v^{n+1}$ with \eqref{solu:u} and \eqref{solu:v}, and goto the next step.  
\end{description}
The main cost of the  scheme \eqref{ch:bdf2:1}-\eqref{ch:bdf2:5}  at each time step is in {\bf Step 1} where one needs  to solve four linear, constant coefficient fourth-order equations. Hence, the scheme is very efficient.

\subsection{Numerical validations}
we will first present  some numerical results to validate the stability and the accuracy of the scheme \eqref{ch:bdf2:1}-\eqref{ch:bdf2:5} using a Fourier spectral method with 
$128^2$ modes in the spatial domain $\Omega=[0,2\pi)^2$.

We first test the accuracy with a manufactured
exact solution  given by 
\begin{eqnarray*}\label{exact}
\begin{aligned}
&&u(x,y,t)=(\frac{\sin(2x)\sin(2y)}{4}+0.48)(1-\frac{\sin^2(t)}{2}),\\
&&v(x,y,t)=(\frac{\sin(2x)\sin(2y)}{4}+0.48)(1-\frac{\sin^2(t)}{2}).
\end{aligned}
\end{eqnarray*}
We set the parameters to be
\begin{equation*}
\eps_u=0.075 \quad \eps_v=0.075 \quad \sigma =10 \quad \alpha=-0.1 \quad \beta =-0.1 \quad \gamma=0.
\end{equation*} 

In Table \ref{table1} and Table \ref{table2} , we list the $L^{\infty}$ errors at $t=0.1$ of  phase variables $u$ and $v$ with various time steps by using   new schemes based on BDF2 and Crank-Nicolson. We observe that both  schemes  have  second-order convergence rate in time.

\begin{table}[ht!]
\centering
\begin{tabular}{r||c|c|c|c}
\hline
$\delta t$            & {$u$}  & Order & {$v$} & Order    \\ \hline
$4\times 10^{-3}$    &$9.87E(-6)$  & $-$   &$8.29E(-6)$ &$-$       \\\hline
$2\times 10^{-3}$    &$2.46E(-6)$  & $2.00$  &$2.07E(-6)$ &$2.00$       \\\hline
$1\times 10^{-3}$     &$6.16E(-7)$ & $1.99$&$5.17E(-7)$ &$2.00$    \\\hline
$5\times 10^{-4}$     &$1.54E(-7)$ &$2.00$  &$1.29E(-7)$ &$2.00$  \\\hline
$2.5\times 10^{-4}$  &$3.85E(-8)$ &$1.99$ &$3.23E(-8)$ &$1.99$   \\ \hline
$1.25\times 10^{-4}$  &$9.63E(-9)$&$1.99$ &$8.07E(-9)$ &$1.99$ \\\hline
$6.25\times 10^{-5}$  &$2.40E(-9)$&$2.00$ &$2.01E(-9)$ &$2.00$ \\\hline
\hline
\end{tabular}
\vskip 0.5cm
\caption{Accuracy test: with given exact solution for the coupled  model \eqref{ch:1}-\eqref{ch:4}. The $L^{\infty}$ errors at $t=0.1$ for the  phase variables $u$ and $v$ computed by the scheme based on BDF2 using various time steps.}\label{table1}
\end{table}

\begin{table}[ht!]
\centering
\begin{tabular}{r||c|c|c|c}
\hline
$\delta t$            & {$u$}  & Order & {$v$} & Order    \\ \hline
$4\times 10^{-3}$    &$3.69E(-6)$  & $-$   &$3.03E(-6)$ &$-$       \\\hline
$2\times 10^{-3}$    &$9.80E(-7)$  & $1.91$  &$8.01E(-7)$ &$1.92$       \\\hline
$1\times 10^{-3}$     &$2.50E(-7)$ & $1.97$&$2.04E(-7)$ &$1.97$    \\\hline
$5\times 10^{-4}$     &$6.32E(-8)$ &$1.98$  &$5.15E(-8)$ &$1.98$  \\\hline
$2.5\times 10^{-4}$  &$1.60E(-8)$ &$1.98$ &$1.30E(-8)$ &$1.98$   \\ \hline
$1.25\times 10^{-4}$  &$3.98E(-9)$&$2.00$ &$3.24E(-9)$ &$2.00$ \\\hline
$6.25\times 10^{-5}$  &$9.34E(-10)$&$2.09$ &$7.77E(-10)$ &$2.00$ \\\hline
\hline
\end{tabular}
\vskip 0.5cm
\caption{Accuracy test: with given exact solution for the coupled  model \eqref{ch:1}-\eqref{ch:4}. The $L^{\infty}$ errors at $t=0.1$ for the  phase variables $u$ and $v$ computed by the scheme based on Crank-Nicolson  using various time steps.}\label{table2}
\end{table}

Next, we examine the energy dissipation and accuracy using a realistic simulation. Since excessively small time steps may be needed to obtain accurate results using the scheme \eqref{ch:bdf2:1}-\eqref{ch:bdf2:5}, we construct a "stabilized" version based on the  the following splitting of free energy:
\begin{equation}
\begin{split}
 E(u,v)=&\int_{\Omega}\frac{\eps^2_u}{2}|\Grad u|^2+\frac{\eps^2_v}{2}|\Grad v|^2+\frac{S_u}2|u|^2+\frac{S_v}2|v|^2 \bx\\
 &+\int_{\Omega} W(u,v)-\frac{S_u}2|u|^2-\frac{S_v}2|v|^2 +\frac{\sigma}{2}|(-\Delta)^{-\frac 12}(v-\overline v)|^2d\bx,
 \end{split}
 \end{equation}
  where $W(u,v)$ is given in \eqref{wuv} and $S_u,S_v\ge 0$ are stabilizing constants.

Setting $\tilde W(u,v)=W(u,v)-\frac{S_u}2|u|^2-\frac{S_v}2|v|^2$,  the "stablized" version of the scheme \eqref{ch:bdf2:1}-\eqref{ch:bdf2:5} corresponding to the above splitting is:
\begin{eqnarray}
&&\frac{3u^{n+1}-4u^n+u^{n-1}}{2\delta t}=M_u\Delta\mu^{n+1}_u, \label{ch:bdf2:1b}\\
&&\mu^{n+1}_u=-\eps_u^2\Delta u^{n+1}+S_u u^{n+1} +(\frac{\delta \tilde W}{\delta u})^{\star,n}\eta^{n+1},\label{ch:bdf2:2b}\\
&&\frac{3v^{n+1}-4v^n+v^{n-1}}{2\delta t}=M_v\Delta\mu^{n+1}_v,\label{ch:bdf2:3b} \\
&&\mu^{n+1}_v=-\eps_v^2\Delta v^{n+1} +S_v v^{n+1}+(\frac{\delta \tilde W}{\delta v})^{\star,n}\eta^{n+1}-\sigma\Delta^{-1}(v^{n+1}-\overline v)\label{ch:bdf2:4b},\\
&& (3\tilde  W(u^{n+1},v^{n+1})-4\tilde W(u^n,v^n)+\tilde W(u^{n-1},v^{n-1}),1)\label{ch:bdf2:5b}\\
&&\hskip 1cm = \eta^{n+1}\{((\frac{\delta \tilde W}{\delta u})^{\star,n},3u^{n+1}-4u^n+u^{n-1})+((\frac{\delta \tilde W}{\delta v})^{\star,n},3v^{n+1}-4v^n+v^{n-1})\}.\nonumber
\end{eqnarray}
Obviously, we can prove that the above scheme is also unconditionally energy stable as  in Theorem \ref{bdf:thm}, and can be efficiently implemented as the scheme \eqref{ch:bdf2:1}-\eqref{ch:bdf2:5}.

We take  uniformly distributed random functions in $[-1,1]^2$ with zero mean  as  initial conditions, and set 
  the parameters to be
\begin{equation}
\eps_u=0.05 \quad \eps_v=0.05 \quad \sigma =10 \quad \alpha=-0.1 \quad \beta =0.75 \quad \gamma=0,
\end{equation}
and the stabilized constants are  $S_u=S_v=1$.  We plot in Fig.\,\ref{energy_compare} evolution of the total energy computed with different time steps. We observe that all  energies  decay with time as expected.   For $\delta t=8\times 10^{-4}, 4\times 10^{-4},  2\times 10^{-4}, 10^{-4}$,  the four energy curves essentially coincide.  However, significant differences appear with   $\delta t=2\times 10^{-3}$ and  $\delta t=8\times 10^{-3}$.   So in order to achieve good accuracy for numerical simulations, we should  choose  $\delta t\leq  8\times 10^{-4}$. Note that the required time step would be much smaller without the stabilization, i.e., $S_u=S_v=0$.

\begin{figure}[htbp]
\centering
\includegraphics[width=0.60\textwidth,clip==]{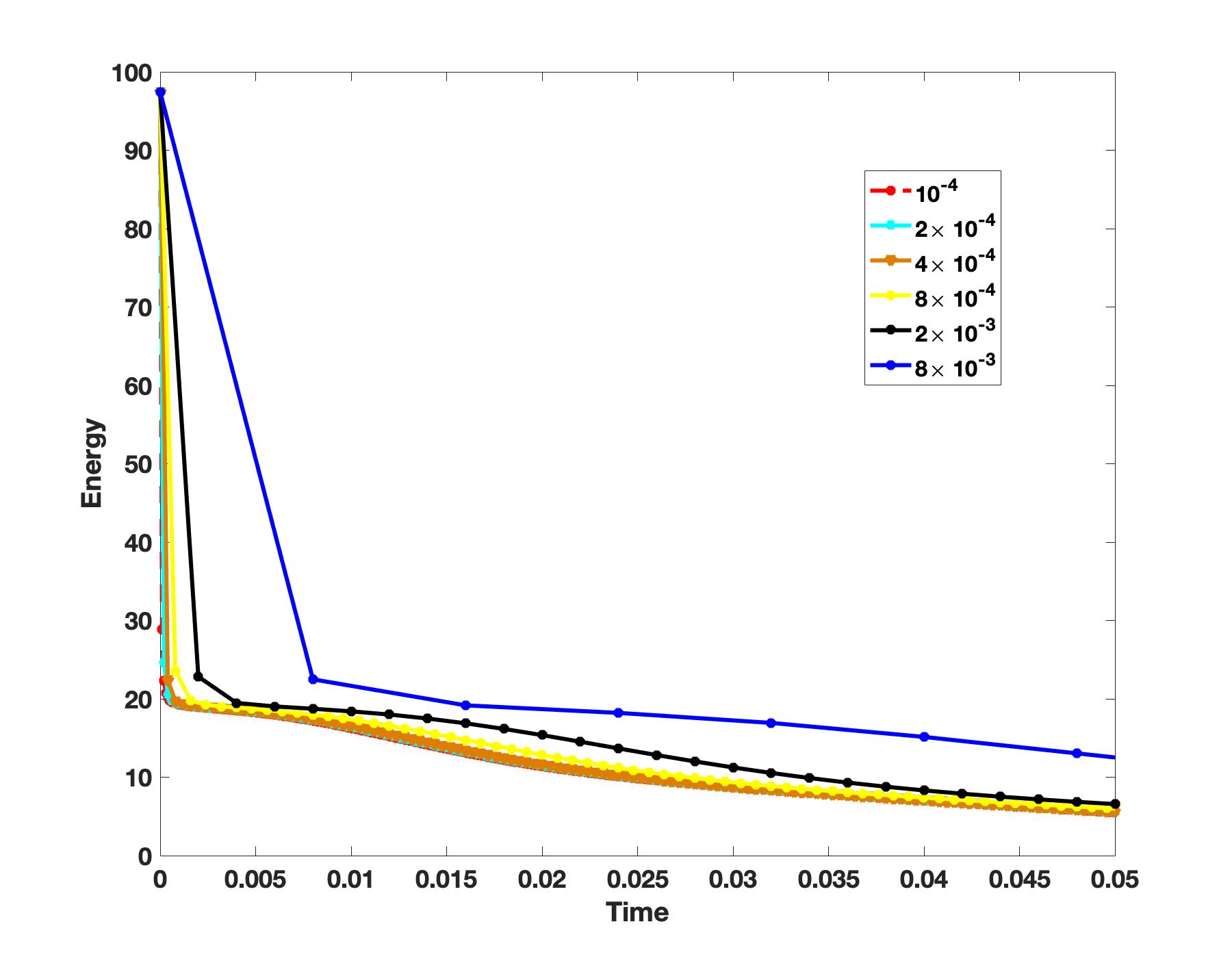}
\caption{Evolution of the free energy using the stabilized scheme with various time steps.}\label{energy_compare}
\end{figure}

\section{Adaptive time stepping}
A main advantage of unconditional energy stable numerical schemes is that one can combine them with  an adaptive time stepping so that their efficiency can be further improved.  This has been demonstrated with many examples for  the SAV approach \cite{shen2017new}. We can also combine an adaptive time stepping strategy with the new Lagrange multiplier approach. 

Since it is more  difficult to devise an adaptive time stepping for the scheme based on BDF2, we  construct below a second-order scheme with variable time steps based on a second-order Crank-Nicolson scheme. To fix the idea, we take the coupled problem 
\eqref{ch:1}-\eqref{ch:4} as an example.

A stabilized second-order Crank-Nicolson scheme with variable time step is as follows:
 assuming that $u^{n-1}$, $u^{n}$ and $v^{n-1}$, $v^{n}$ are known,  set $a_n=\frac{\delta t_{n}}{\delta t_{n-1}}$ and  $\delta t_n=t^{n+1}-t^n$,   we solve
$u^{n+1}$ and  $v^{n+1}$ as follows:
\begin{eqnarray}
&&\frac{u^{n+1}-u^n}{\delta t_{n}}=M_u\Delta\mu^{n+\frac 12}_u, \label{ad:ch:cn1}\\
&&\mu^{n+\frac 12}_u=-\eps_u^2\Delta u^{n+\frac 12}+S_u u^{n+\frac 12} +(\frac{\delta \tilde W}{\delta u})^{\star,n}\eta^{n+\frac 12},\label{ad:ch:cn2}\\
&&\frac{v^{n+1}-v^n}{\delta t_n}=M_v\Delta\mu^{n+\frac 12}_v,\label{ad:ch:cn3} \\ 
&&\mu^{n+\frac 12}_v=-\eps_v^2\Delta v^{n+\frac 12} +S_v v^{n+\frac 12} +(\frac{\delta \tilde W}{\delta v})^{\star,n}\eta^{n+\frac 12}-\sigma\Delta^{-1}(v^{n+\frac 12}-\overline v)\label{ad:ch:cn4},\\  
&&\eta^{n+\frac 12}((\frac{\delta\tilde  W}{\delta u})^{\star,n},u^{n+1}-u^n)+((\frac{\delta \tilde W}{\delta v})^{\star,n},v^{n+1}-v^n)\}\nonumber\\&&\hskip 1 cm=(\tilde W(u^{n+1},v^{n+1})-\tilde W(u^n,v^n),1),\label{ad:ch:cn5}
\end{eqnarray}
where $f^{n+\frac 12}=\frac 12(f^{n+1}+f^n)$ and $f^{\star,n}=(1+\frac{a_n}{2})f^n-\frac{a_n}{2}f^{n-1}$  for any function $f$.

The above scheme can also be efficiently implemented as the scheme \eqref{ch:bdf2:1}-\eqref{ch:bdf2:5}, and the following result  can be easily established.
\begin{thm}
The  scheme \eqref{ad:ch:cn1}-\eqref{ad:ch:cn5} is unconditional energy stable  in the sense that 
\begin{eqnarray}
\frac{E^{n+1}-E^n}{\delta t_n}= -(M_u\|\Grad\mu_u^{n+\frac 12}\|+M_v\|\Grad\mu^{n+\frac 12}_v\|^2),
\end{eqnarray}
where $E^{n+1}=\int_{\Omega}\frac{\eps^2_u}{2}|\Grad u^{n+1}|^2+\frac{\eps^2_v}{2}|\Grad v^{n+1}|2+W(u^{n+1},v^{n+1})+\frac{\sigma}{2}|(-\Delta)^{-\frac 12}(v^{n+1}-\overline v^{n+1})|^2d\bx$.
\end{thm}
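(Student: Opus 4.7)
The plan is to mirror the argument of Theorem 2.1 (and its BDF2 counterpart), exploiting the fact that the Crank–Nicolson midpoint rule turns every quadratic form into an exact difference of consecutive energies. First I would test \eqref{ad:ch:cn1} against $\mu_u^{n+1/2}$ and \eqref{ad:ch:cn3} against $\mu_v^{n+1/2}$; integration by parts converts the right-hand sides into $-M_u\|\nabla\mu_u^{n+1/2}\|^2$ and $-M_v\|\nabla\mu_v^{n+1/2}\|^2$, which are already the two dissipative terms in the desired identity. Next I would test \eqref{ad:ch:cn2} against $-(u^{n+1}-u^n)/\delta t_n$ and \eqref{ad:ch:cn4} against $-(v^{n+1}-v^n)/\delta t_n$ and add these to the previous pair, so the $\mu$-terms cancel against the time-derivative pairings.

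The key point at this stage is the elementary identity $2 a^{n+1/2}(a^{n+1}-a^n) = |a^{n+1}|^2-|a^n|^2$, applied with $a\in\{u,\nabla u, v,\nabla v\}$. This produces, after division by $\delta t_n$, exact telescoping increments of the quadratic pieces: $\tfrac{\epsilon_u^2}{2}\|\nabla u\|^2$, $\tfrac{\epsilon_v^2}{2}\|\nabla v\|^2$ and the stabilizing squares $\tfrac{S_u}{2}\|u\|^2$, $\tfrac{S_v}{2}\|v\|^2$. For the non-local term $-\sigma\Delta^{-1}(v^{n+1/2}-\bar v)$ I would note that $-\Delta^{-1}$ is self-adjoint and positive on the mean-zero subspace, and that the scheme preserves $\bar v$ (take the spatial mean of \eqref{ad:ch:cn3} and use periodic/Neumann conditions), so that $\bar v^{n+1}=\bar v^n$ and the same midpoint identity yields the clean increment $\tfrac{\sigma}{2}\bigl(\|(-\Delta)^{-1/2}(v^{n+1}-\bar v)\|^2-\|(-\Delta)^{-1/2}(v^n-\bar v)\|^2\bigr)$.

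The remaining terms are the Lagrange-multiplier contributions $\eta^{n+1/2}\bigl((\tfrac{\delta\tilde W}{\delta u})^{\star,n},u^{n+1}-u^n\bigr)$ and the analogous $v$-piece. Here I would divide \eqref{ad:ch:cn5} by $\delta t_n$ and add it to the accumulated identity; by construction these terms cancel exactly, leaving $(\tilde W(u^{n+1},v^{n+1})-\tilde W(u^n,v^n),1)/\delta t_n$. Finally I would recombine $\tilde W+\tfrac{S_u}{2}|u|^2+\tfrac{S_v}{2}|v|^2 = W$ to match the definition of $E^{n+1}$ in the statement and obtain the equality claimed.

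The main obstacle, as in Theorem 2.1, is bookkeeping rather than any deep estimate: one must make sure that the explicit extrapolation $(\cdot)^{\star,n}=(1+a_n/2)(\cdot)^n-(a_n/2)(\cdot)^{n-1}$, which replaces the uniform-grid $\tfrac32(\cdot)^n-\tfrac12(\cdot)^{n-1}$, enters only through the factor multiplying $\eta^{n+1/2}$, so that the cancellation with \eqref{ad:ch:cn5} is unaffected by the non-uniform time steps. The variable step size $\delta t_n$ never enters any of the telescoping identities because each of them is homogeneous of degree one in the time increment, so dividing through by $\delta t_n$ at the very end yields the stated adaptive-in-time energy law.
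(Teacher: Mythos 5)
Your argument is correct and is exactly the proof the paper has in mind: the paper states this result without proof (``can be easily established''), deferring to the same testing-and-telescoping pattern used for Theorems 2.1 and 3.1, and your version correctly adapts it to Crank--Nicolson via the midpoint identity, the conservation of $\overline v$, the exact cancellation of the $\eta^{n+\frac12}$ terms with \eqref{ad:ch:cn5}, and the recombination $\tilde W+\frac{S_u}{2}|u|^2+\frac{S_v}{2}|v|^2=W$. Nothing is missing.
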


Then we use the following algorithm to choose time steps adaptively. 

\medskip
\noindent{\bf Algorithm for adaptive time stepping:}\\
\rule[4pt]{14.3cm}{0.05em}\\
\textbf{Given}  Solutions at time steps $n$ and $n-1$; parameters $tol$ and $\rho$, and the preassigned minimum and maximin allowable time steps $\delta t_{min}$ and $\delta t_{max}$.
\begin{description}
\item[Step 1] Compute $u_1^{n+1}, v_1^{n+1}$  by a first-order Lagrange multiplier scheme with $\delta t_n$.
\item[Step 2] Compute $u_2^{n+1}, v_2^{n+1}$ by adaptive Crank-Nicolson scheme \eqref{ad:ch:cn1}-\eqref{ad:ch:cn5}  with $\delta t_n$.
\item[Step 3] Calculate $e_{n+1}=\max\{\frac{\|u^{n+1}_2-u^{n+1}_1\|}{\|u^{n+1}_2\|},\frac{\|v^{n+1}_2-v^{n+1}_1\|}{\|v^{n+1}_2\|}\}$.
\item[Step 4] \textbf{if} $e_{n+1}>tol$, \textbf{then}\\
Recalculate time step $\delta t_n \leftarrow \max\{\delta t_{min},\min\{A_{dp}(e_{n+1},\delta t_n),\delta t_{max}\}\}$.
\item[Step 5] \textbf{goto} Step 1
\item[Step 6] \textbf{else}\\
Update time step  $\delta t_{n+1}\leftarrow \max\{\delta t_{min},\min\{A_{dp}(e_{n+1},\delta t_n),\delta t_{max}\}\}$,
\item[Step 7] \textbf{endif}
\end{description}
\rule[12pt]{14.3cm}{0.05em}\\
where $A_{dp}(e,\tau)$ is a suitable function, e.g.,  \cite{gomez2011provably,shen2016maximum}:
\begin{equation}
A_{dp}(e,\tau)=\rho(\frac{tol}{e})^{\frac 12}\tau.
\end{equation}

As an example, we used the above adaptive  scheme with $S_u=S_v=5$ to  solve the  coupled Cahn-Hilliard equations with the following  parameters
\begin{equation}
\eps_u=0.075 \quad \eps_v=0.075 \quad \sigma =10 \quad \alpha=-0.23 \quad \beta =0.5 \quad \gamma=0.
\end{equation}
In the first figure of Fig.\,\ref{time_adaptive}, we plot the energy curves with uniform time step sizes $\delta t=10^{-2}$, $\delta t=10^{-5}$  and  with adaptive time steps. It is observed   that the curve by  adaptive time stepping coincides with the reference curve by $\delta t=10^{-5}$, while the curve with larger time step $10^{-2}$ deviates from the reference curve. In the second figure,  we present the evolution of adaptive time steps  and observe that a wide range of time steps are used with the largest time step almost two-order of magnitude larger than the smallest time step. In the last figure, we plot evolution of  the Lagrange multiplier $\eta$, and  observe that $\eta$ oscillates around $1$ but remains to be positive.

\begin{figure}[htbp]
\centering
\includegraphics[width=0.45\textwidth,clip==]{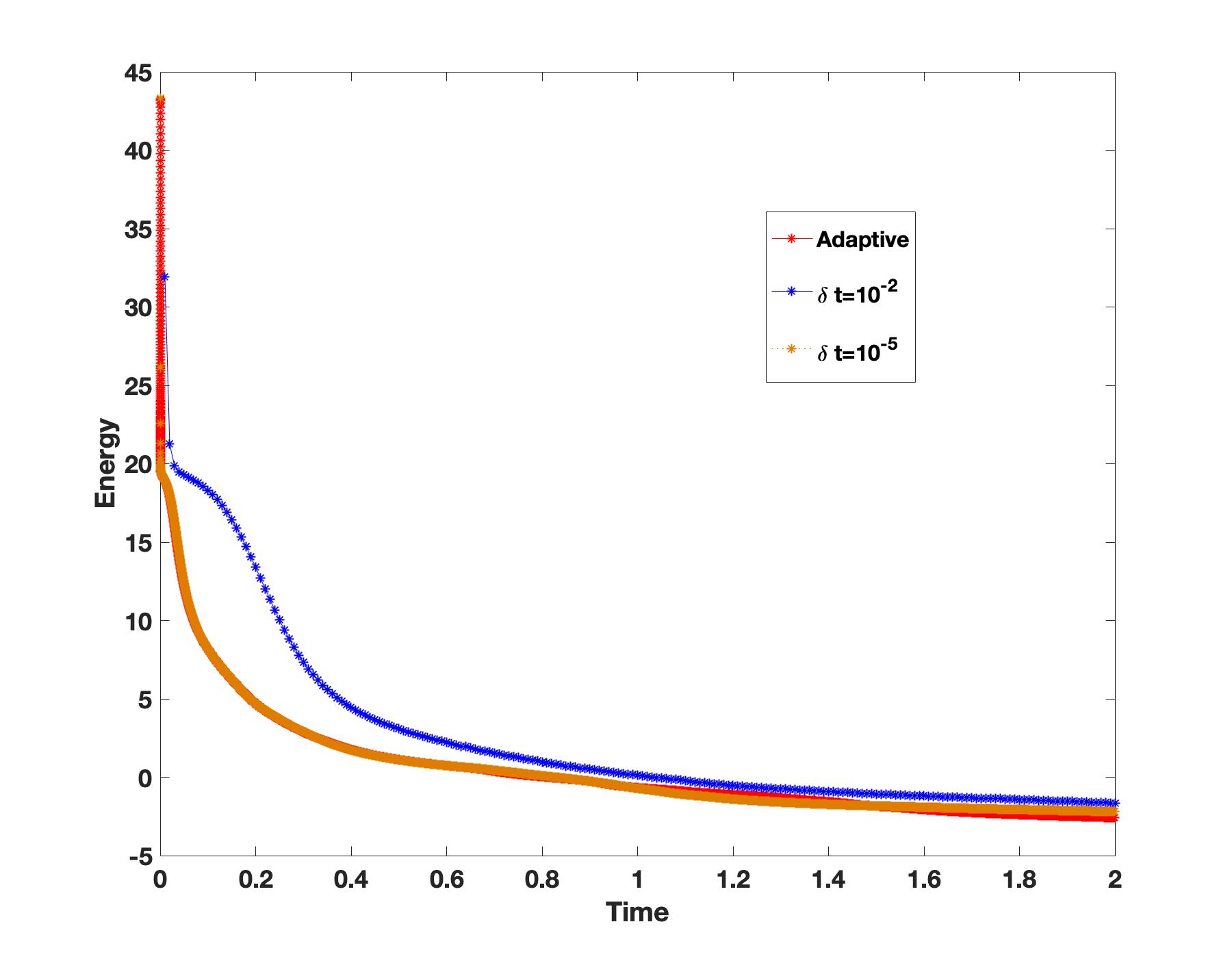}
\includegraphics[width=0.45\textwidth,clip==]{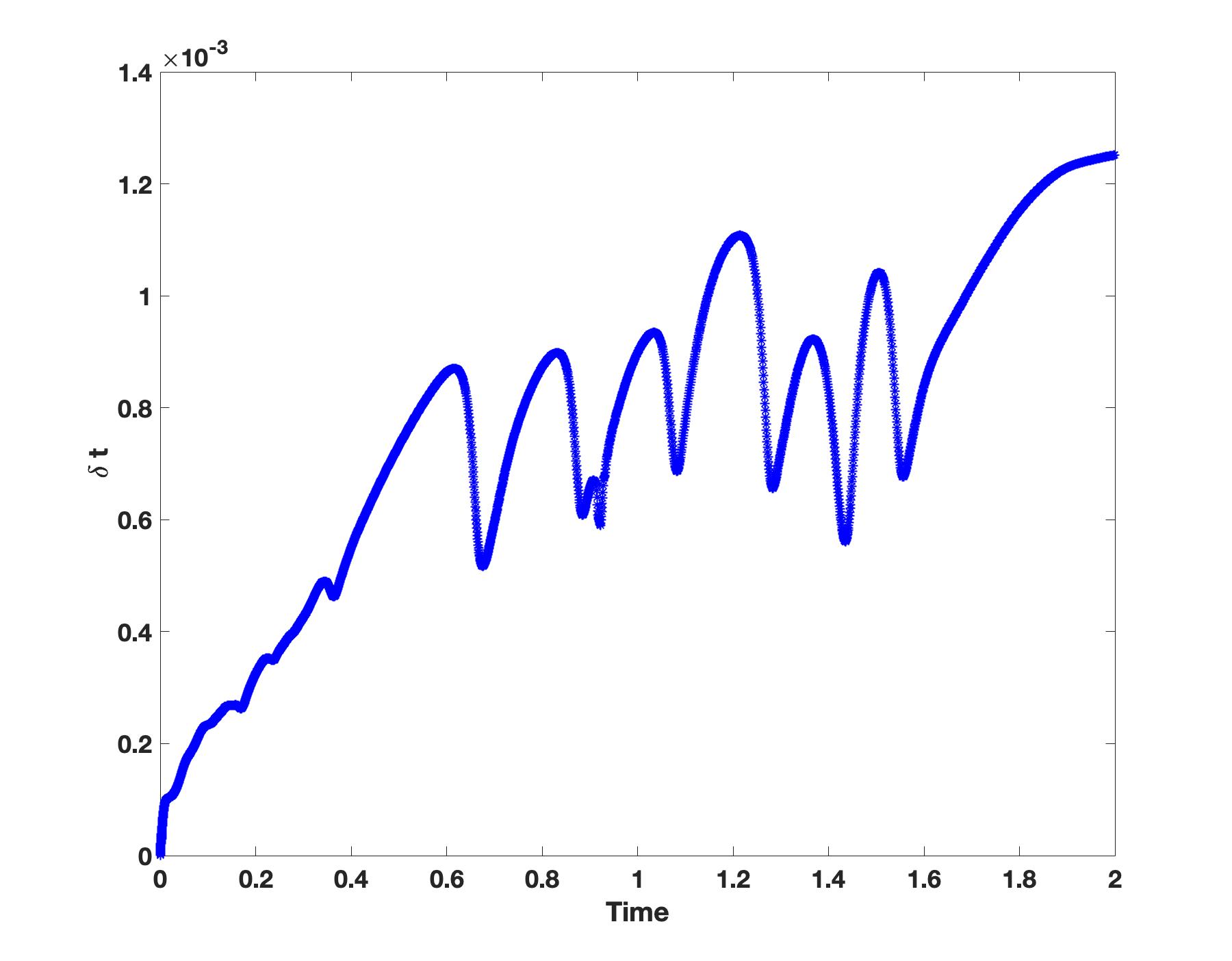}
\includegraphics[width=0.45\textwidth,clip==]{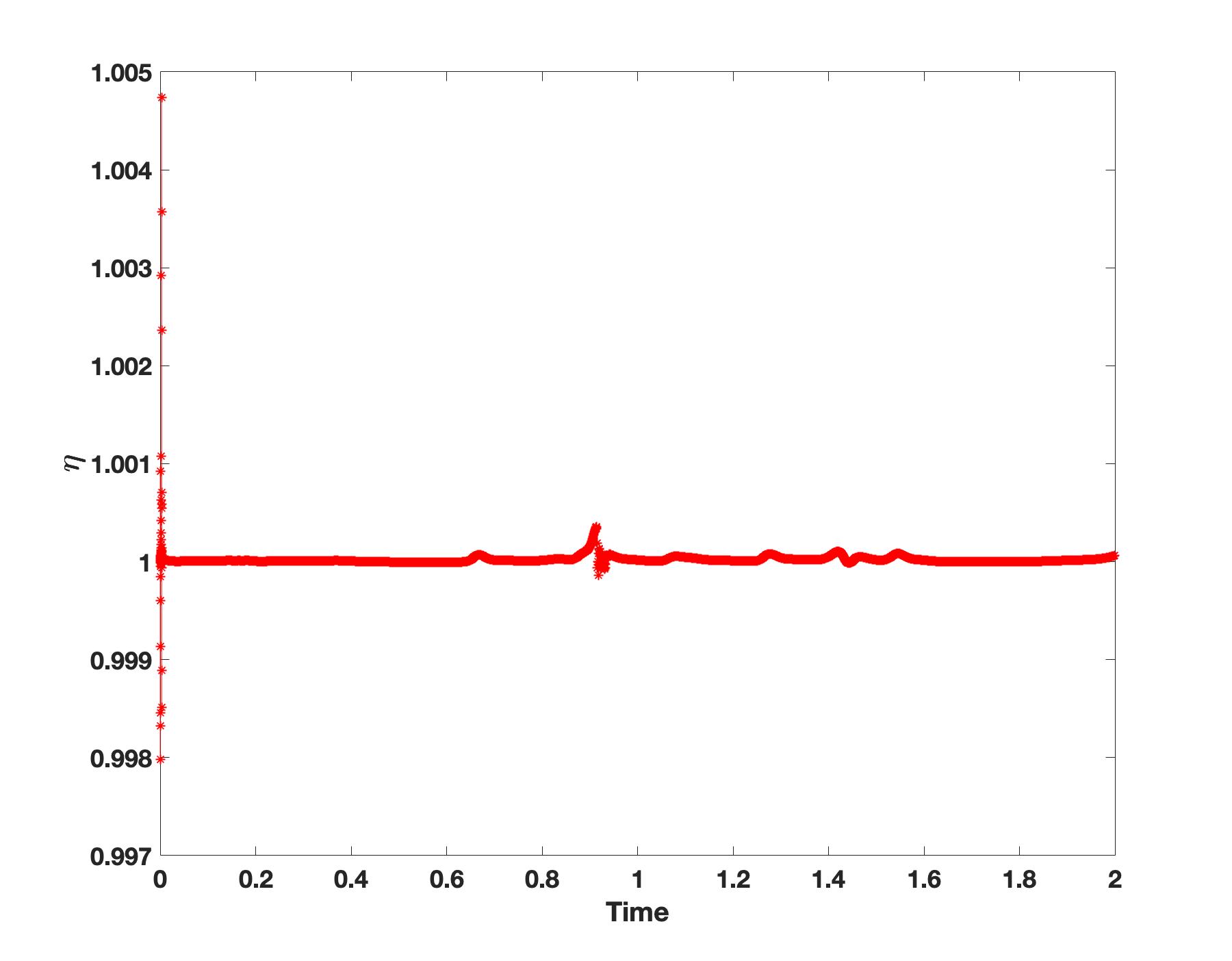}
\caption{First: evolution of the free energy with uniform time steps and   adaptive time stepping; Second: evolution of the time steps with  adaptive time stepping; Third: evolution of the Lagrange multiplier $\eta$ with  adaptive time stepping.}\label{time_adaptive}
\end{figure}

\section{Numerical simulations of block copolymers}\label{BCP}
 In this section,  we consider 
   the coupled non-local Cahn-Hilliard system \eqref{ch:1}-\eqref{ch:4} introduced in \cite{avalos2016frustrated} to describe the dynamics of block copolymers. The unknowns are $u$ and $v$ 
where  $u$ describes macrophase separation into homopolyer and copolymer domains, while $v$ describes
the microphase separation within the copolymer domain;  parameters $M_u$ and $M_v$  are mobility constants that control the speed at which order parameters $u$ and $v$ change; $\eps_u$ represents the interfacial width while $\eps_v$ is a parameter  related to the temperature; and $\alpha,\beta,\gamma, \sigma$ are modeling parameters.  The phase parameter $\sigma$ represents a long range interaction that is needed to obtain particles with a fine structure, such as ellipsoids. 
 The coupling parameter $\alpha$ causes symmetry-breaking between microphase separated domains and by changing its value we are able to control the interaction between the confined copolymer and the confining surface. The coupling parameter $\beta$ affects the free energy depending of the  sign of $u$. The parameter $\gamma$ plays a similar role as $\beta$, which affects the free energy depending of the  sign of $v$. So we take  $\gamma=0$ in our simulations.

Three-dimensional simulations based on the coupled BCP model have produced very interesting  phenomena which are also observed in experiments  \cite{avalos2016frustrated,varadharajan2018surface}. For example, Fig. \ref{refer_2} presented in  \cite{varadharajan2018surface} shows that the model can correctly capture the  dynamical transformation of striped ellipsoids into onion like shapes as interfacial width decreases which corresponds to temperature increase. 
However, simple semi-implicit numerical schemes used in  \cite{avalos2016frustrated,varadharajan2018surface} for the above system lead to severe time step constraints in certain parameter regimes, so they are very expensive to run, particularly in three dimension. 

We present below several two-dimensional numerical simulations for the coupled non-local Cahn-Hilliard system \eqref{ch:1}-\eqref{ch:4} by using the stabilized scheme \eqref{ad:ch:cn1}-\eqref{ad:ch:cn5} with adaptive time stepping. 
 In all simulations, we take the uniformly distributed random function in $[-1,1]^2$ as the initial condition.

 \begin{figure}
\centering
\includegraphics[width=0.5\textwidth,clip==]{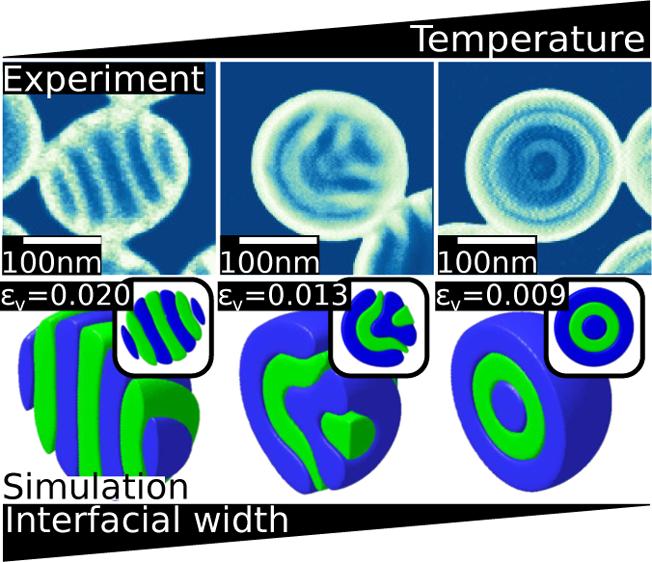}\hskip 0cm
\caption{Experimental results  and simulations at various temperature, taken from \cite{varadharajan2018surface}.}
\label{refer_2}
\end{figure}

\subsection{Annealing of Block Copolymer}
Annealing of block copolymers is an important process in reconfiguration of nanoparticles.  The coupled Cahn-Hilliard equations \eqref{ch:1}-\eqref{ch:4}   can correctly describe the process of annealing  block copolymers \cite{varadharajan2018surface,avalos2016frustrated},  and makes it possible to numerically simulate a variety of experimental conditions involving  nanoparticles undergoing a heating process.

In our numerical simulations, we assume that a block copolymer  particle is immersed in an external medium, which could be a homopolymer, a solvent, or water. To fix the idea,  we shall consider the external medium as the homopolymer, with  the order parameter    $u=-1$ representing the homopolymer rich domain and $u=1$ representing the BCP-rich domain, along with a smooth transitional layer of thickness $\eps_u$. The order parameter $v$  describes micro separation inside the BCP
domain which also acquires values from interval $[-1,1]$ with the end points corresponding to A-type BCP and B-type BCP.   
The heating process starts at a given temperature $T$ which in turn depends on the interface width $\eps_v$ as  $T \propto  \frac{1}{\eps_v^2}$.

 In the first simulation, we set the parameters in \eqref{ch:1}-\eqref{ch:4} to be
 \begin{equation}\label{second}
\eps_u=0.075 \quad \eps_v=0.05 \quad \sigma =10 \quad \alpha=0.1 \quad \beta =-0.75 \quad \gamma=0.
\end{equation}
Fig.\,\ref{spin_2} shows  the morphological transformations at different times. During the phase transformation, the order parameters $u$ and $v$ vary  in a complicated manner. However at $t=2$ which  essentially reached the steady state, it is observed that  $v$ exhibits    locally striped shapes, with the yellow bulk presenting  A-BCP particles and blue bulk representing  the B-BCP particles. The steady state morphology resembles  the striped ellipsoid shape in the experimental results depicted  in Fig.\,\ref{refer_1}.

 \begin{figure}
\centering
\subfigure[$u:t=0.02$.]{\includegraphics[width=0.22\textwidth,clip==]{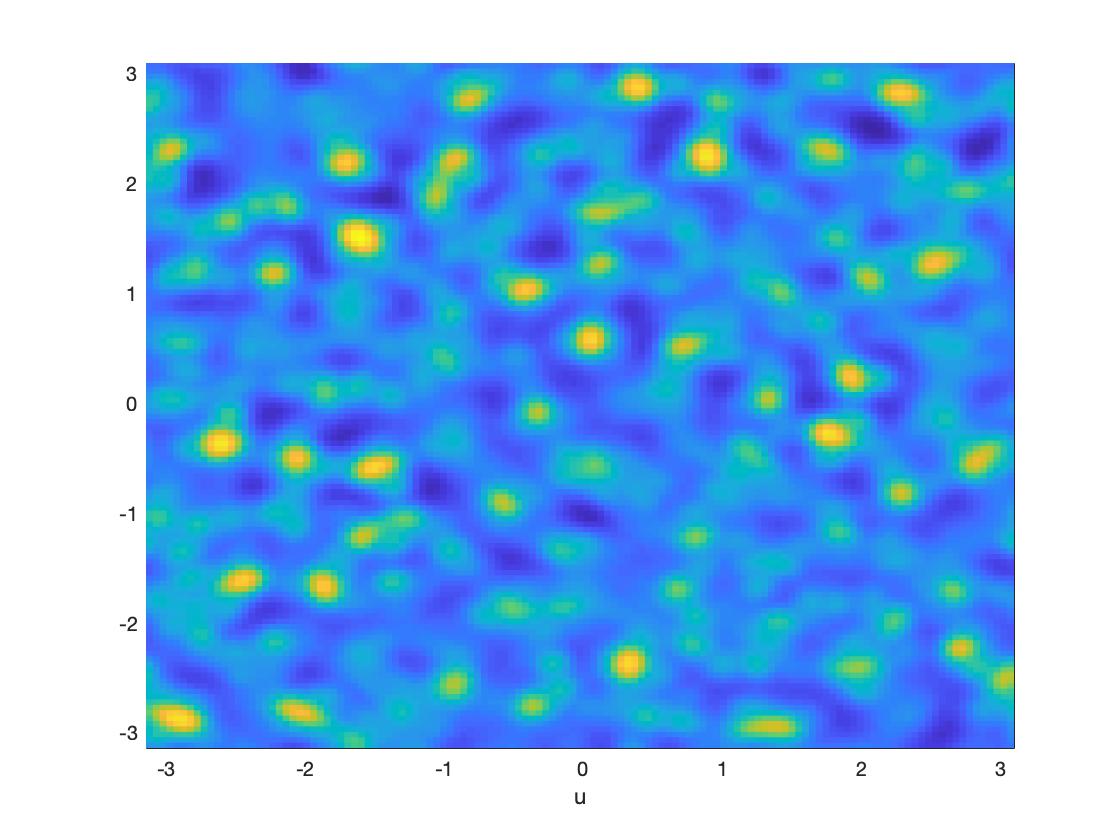}\hskip 0cm}
\subfigure[$v:t=0.02$.]{\includegraphics[width=0.22\textwidth,clip==]{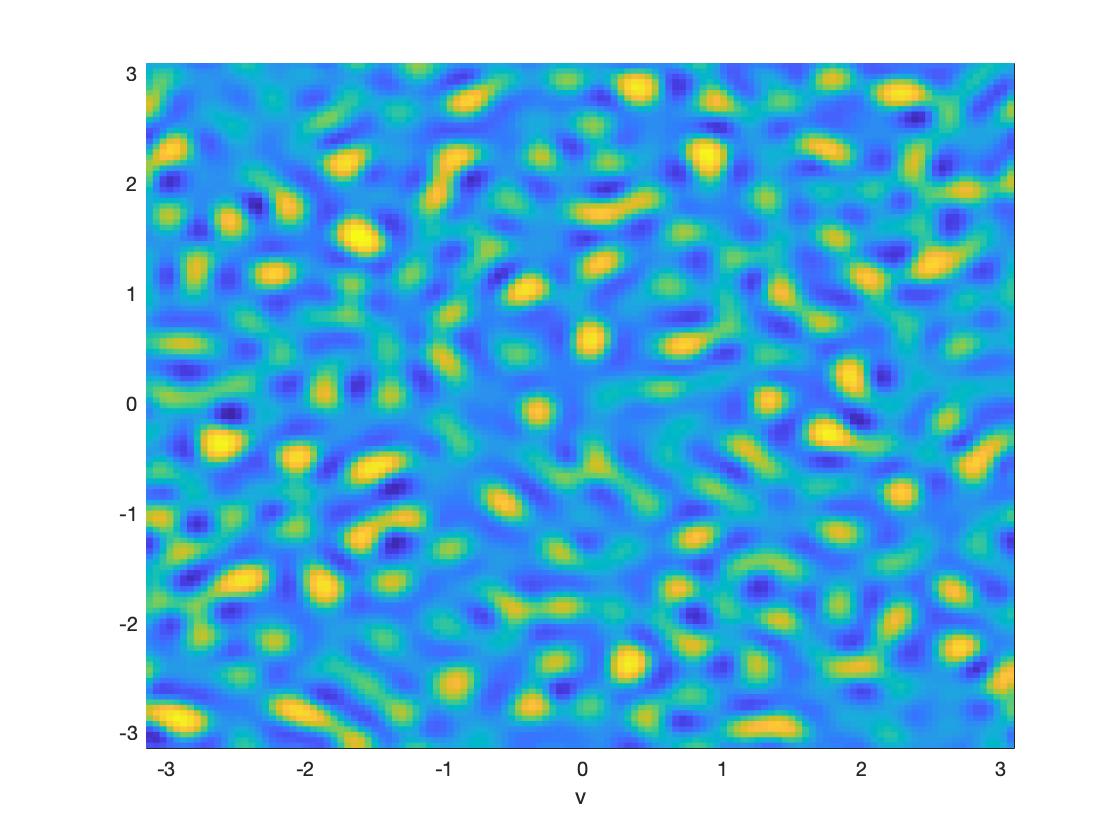}\hskip 0cm}
\subfigure[$u:t=0.1$.]{\includegraphics[width=0.22\textwidth,clip==]{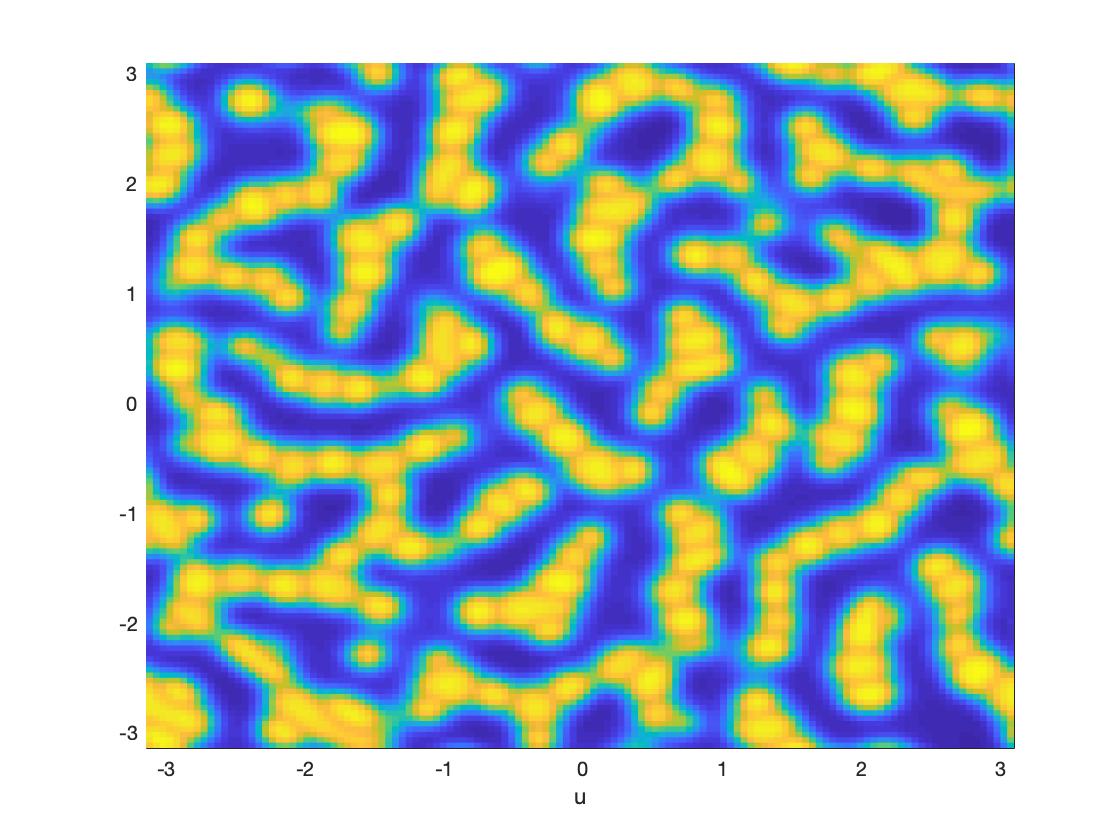}\hskip 0cm}
\subfigure[$v:t=0.1$.]{\includegraphics[width=0.22\textwidth,clip==]{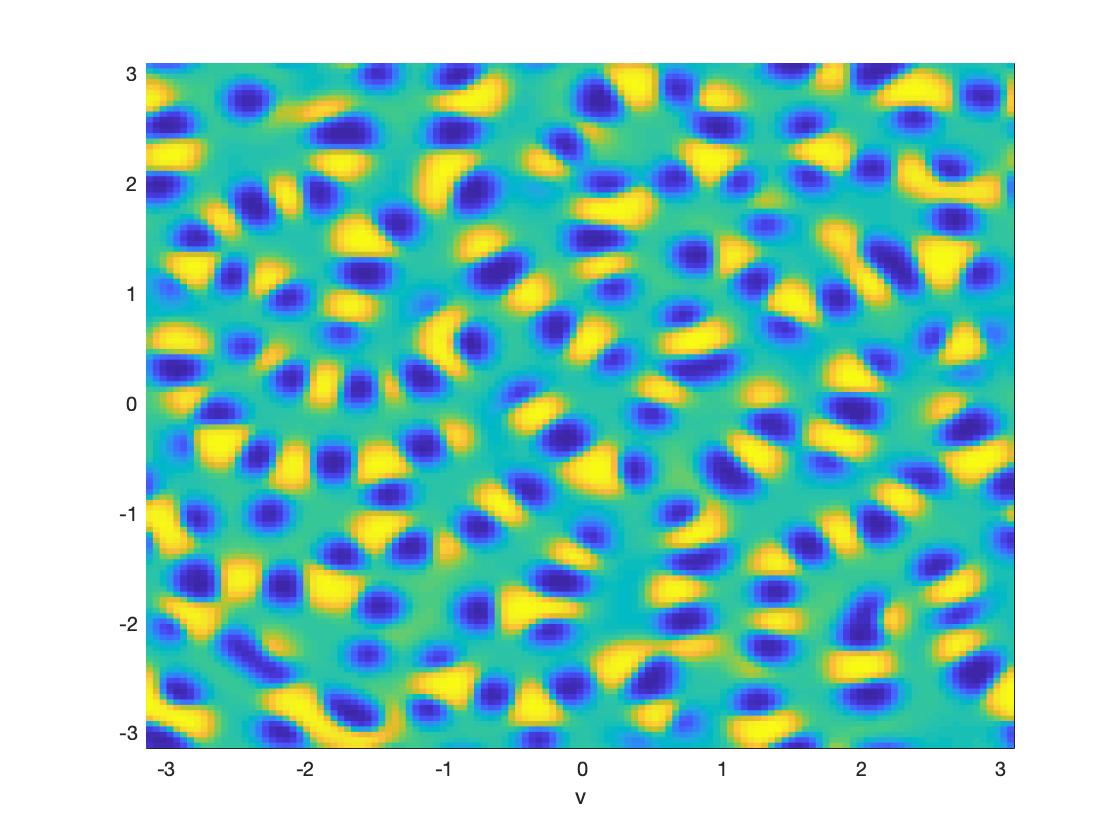}\hskip 0cm}
\subfigure[$u:t=0.5$.]{\includegraphics[width=0.22\textwidth,clip==]{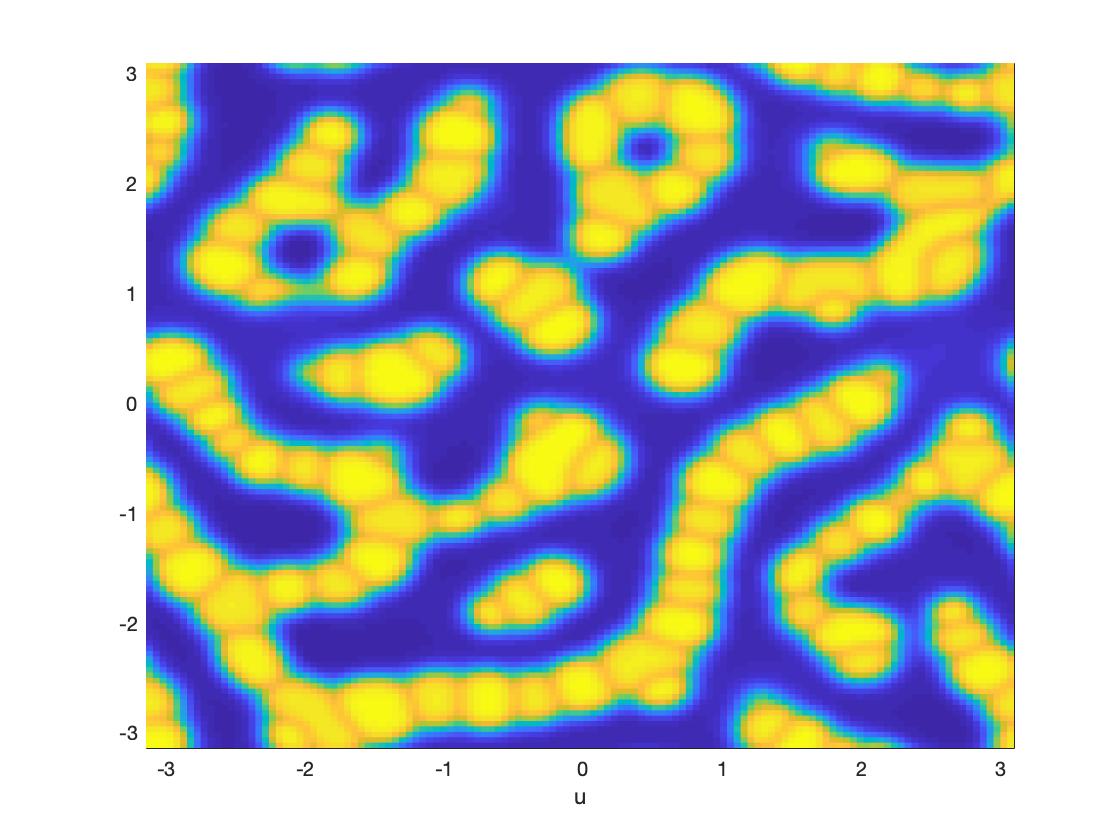}\hskip 0cm}
\subfigure[$v:t=0.5$.]{\includegraphics[width=0.22\textwidth,clip==]{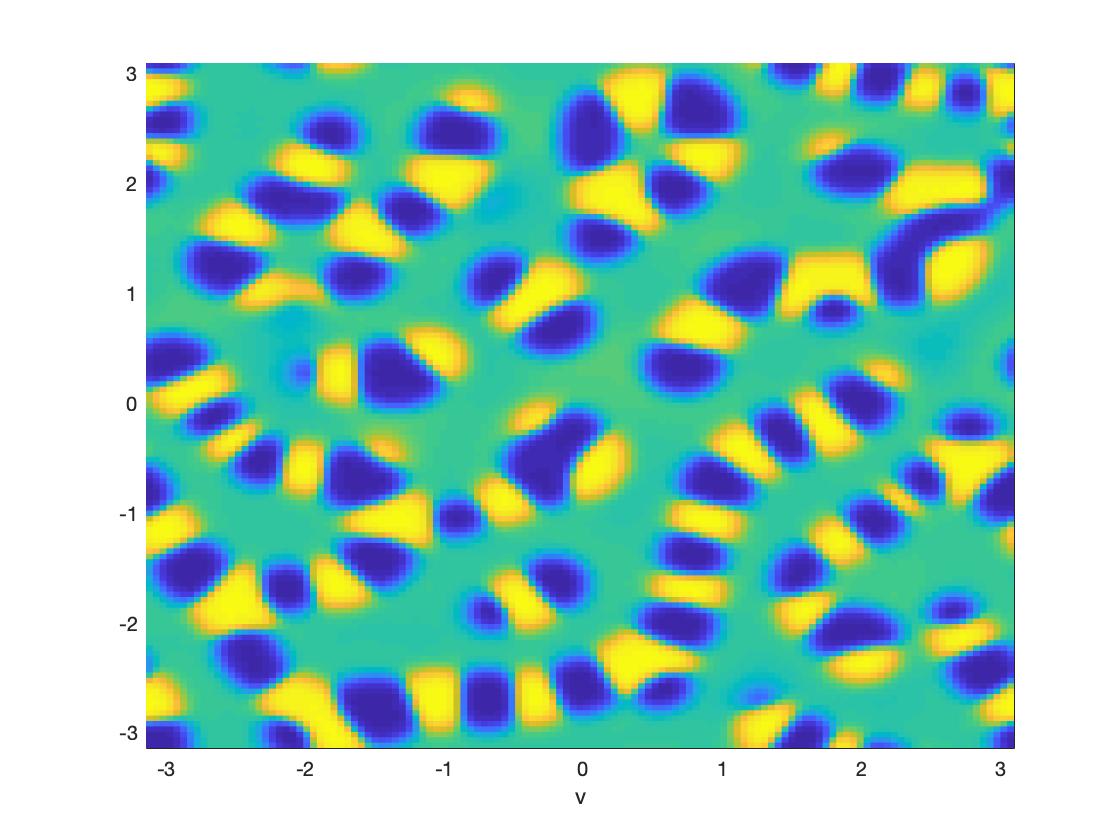}\hskip 0cm}
\subfigure[$u:t=2$.]{\includegraphics[width=0.22\textwidth,clip==]{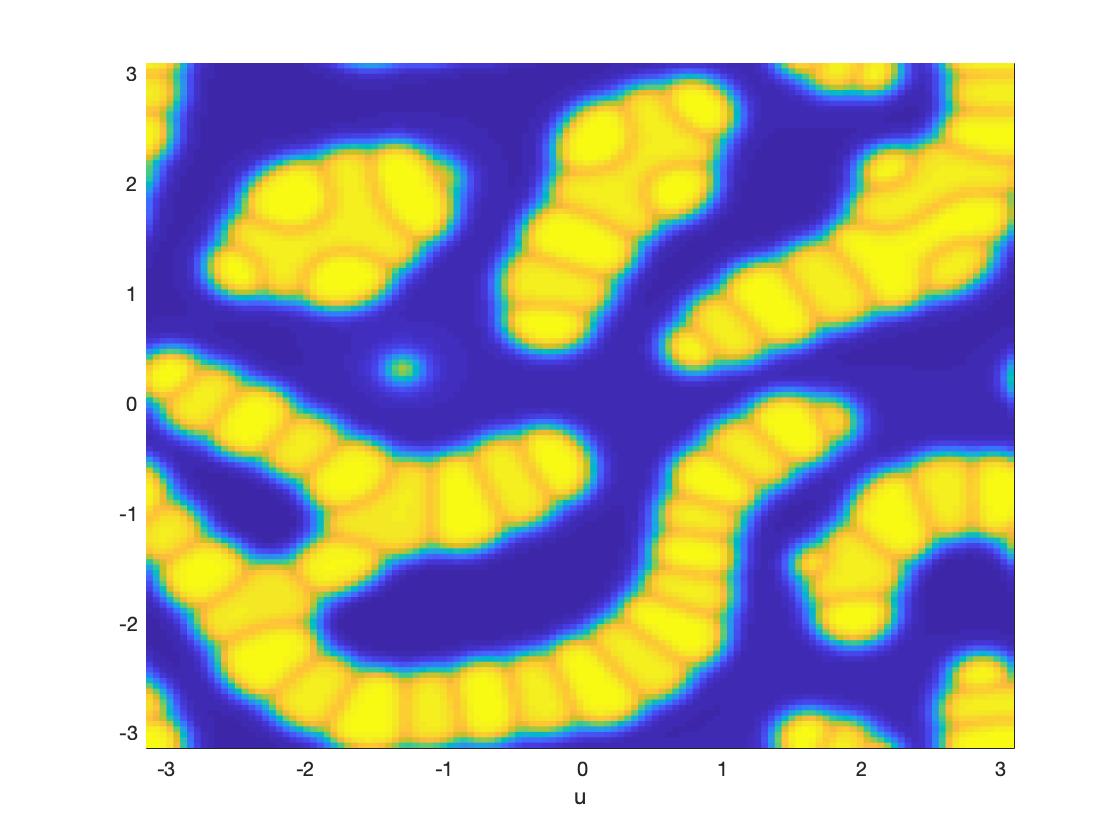}\hskip 0cm}
\subfigure[$v:t=2$.]{\includegraphics[width=0.22\textwidth,clip==]{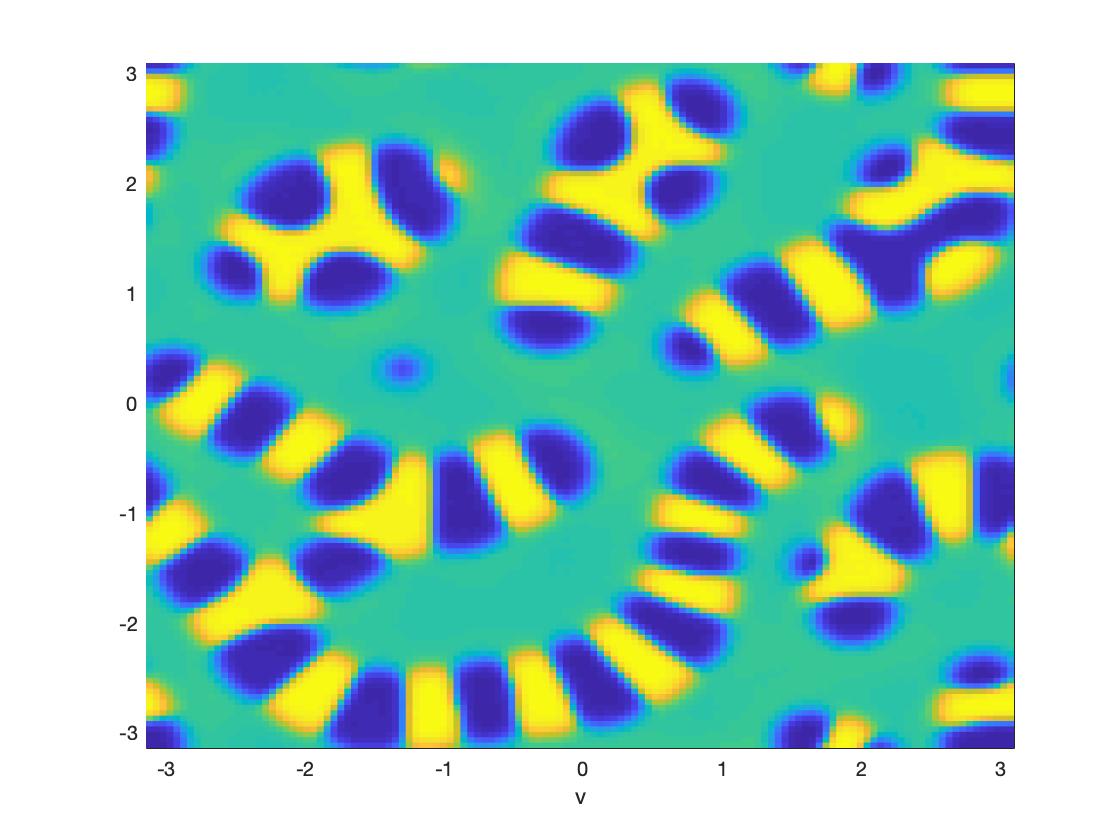}\hskip 0cm}
\caption{Dynamical evolution of the phase variables $u,v$ for the Coupled-BCP model with  parameters in \eqref{second}.}\label{spin_2}
\end{figure}

 \begin{figure}
\centering
\includegraphics[width=0.5\textwidth,clip==]{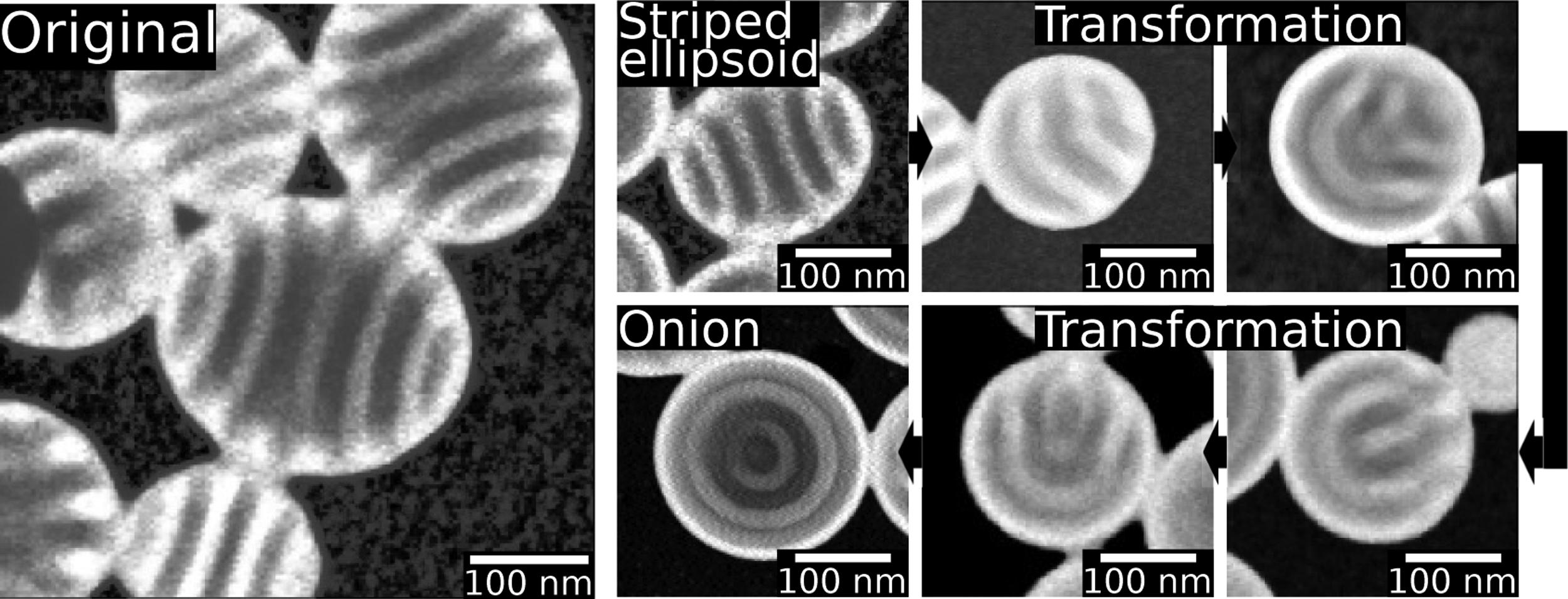}\hskip 0cm\\
\includegraphics[width=0.5\textwidth,clip==]{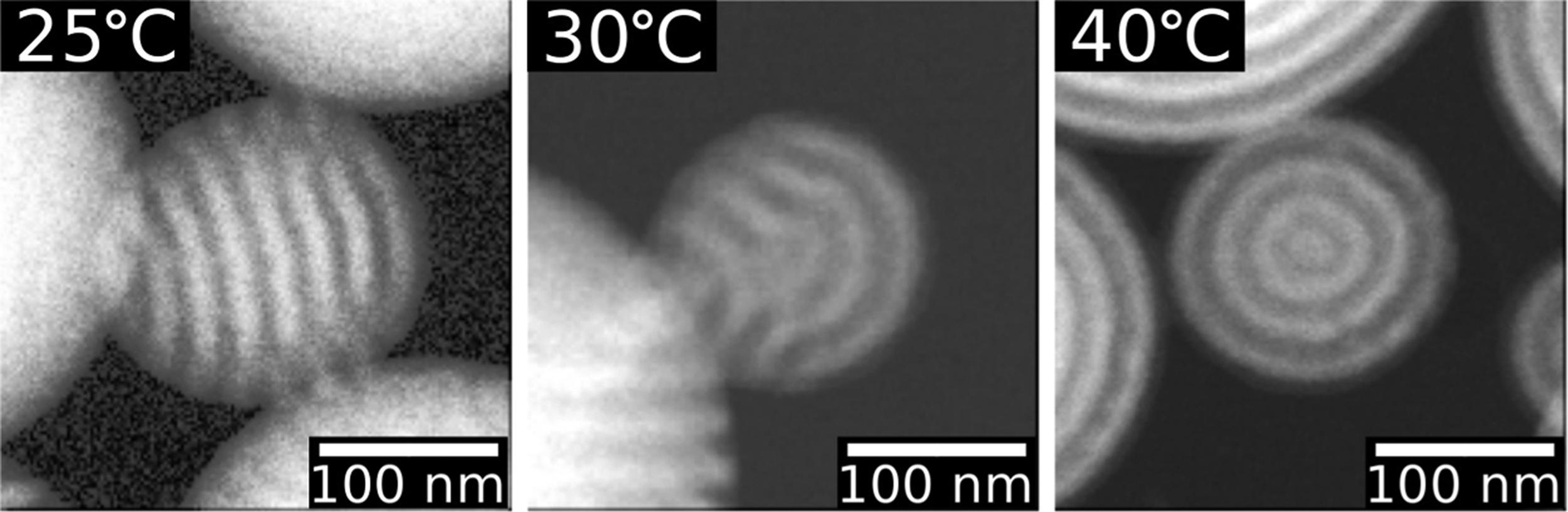}\hskip 0cm
\caption{Experimental results of annealing process at various temperature presented in \cite{varadharajan2018surface}.}
\label{refer_1}
\end{figure}

In the second simulation presented in  Fig.\,\ref{spin_1}, we used  the following set of parameters:
\begin{equation}\label{first}
\eps_u=0.075 \quad \eps_v=0.05 \quad \sigma =10 \quad \alpha=0.1 \quad \beta =0.75 \quad \gamma=0.
\end{equation}
 Note  that we switched the sign of $\beta$ while keeping other parameters unchanged.  We observe similar configurations as in Fig.\,\ref{spin_2}, except that the positions of yellow bulk and blue bulk  in the profiles for $v$ appear to be  interchanged.

\begin{figure}
\centering
\subfigure[$u:t=0.02$.]{\includegraphics[width=0.22\textwidth,clip==]{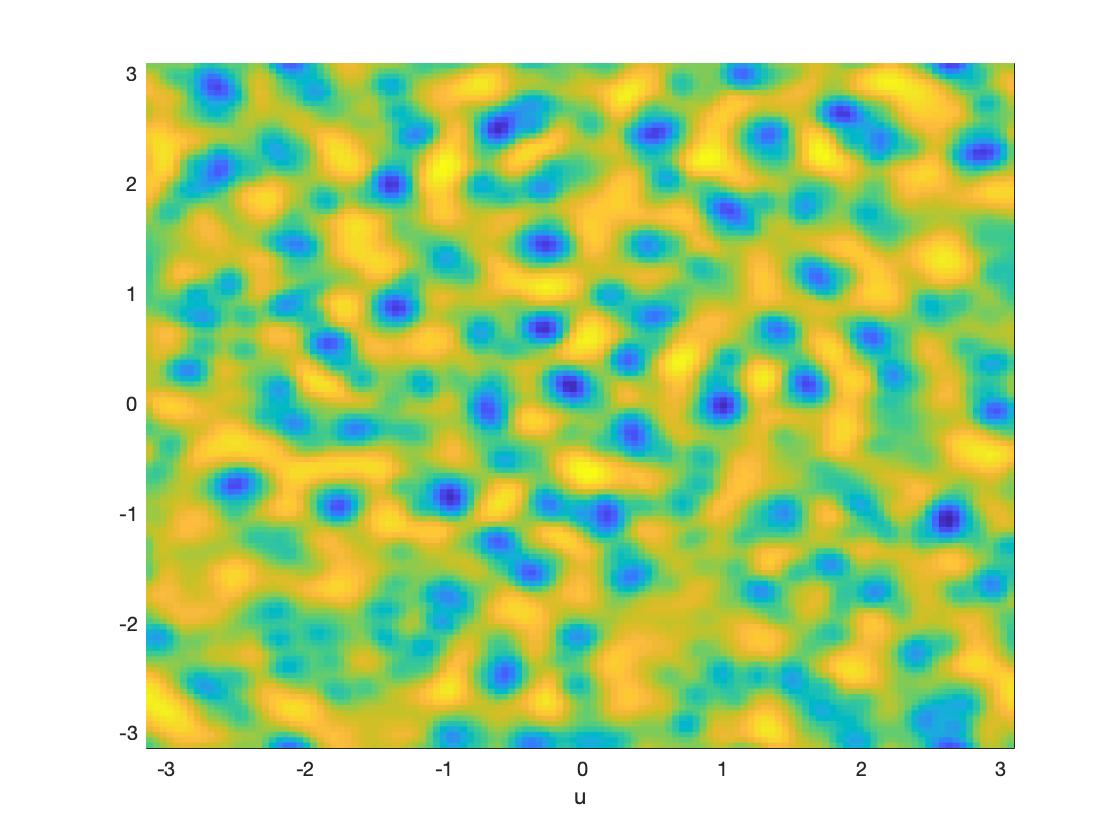}\hskip 0cm}
\subfigure[$v:t=0.02$.]{\includegraphics[width=0.22\textwidth,clip==]{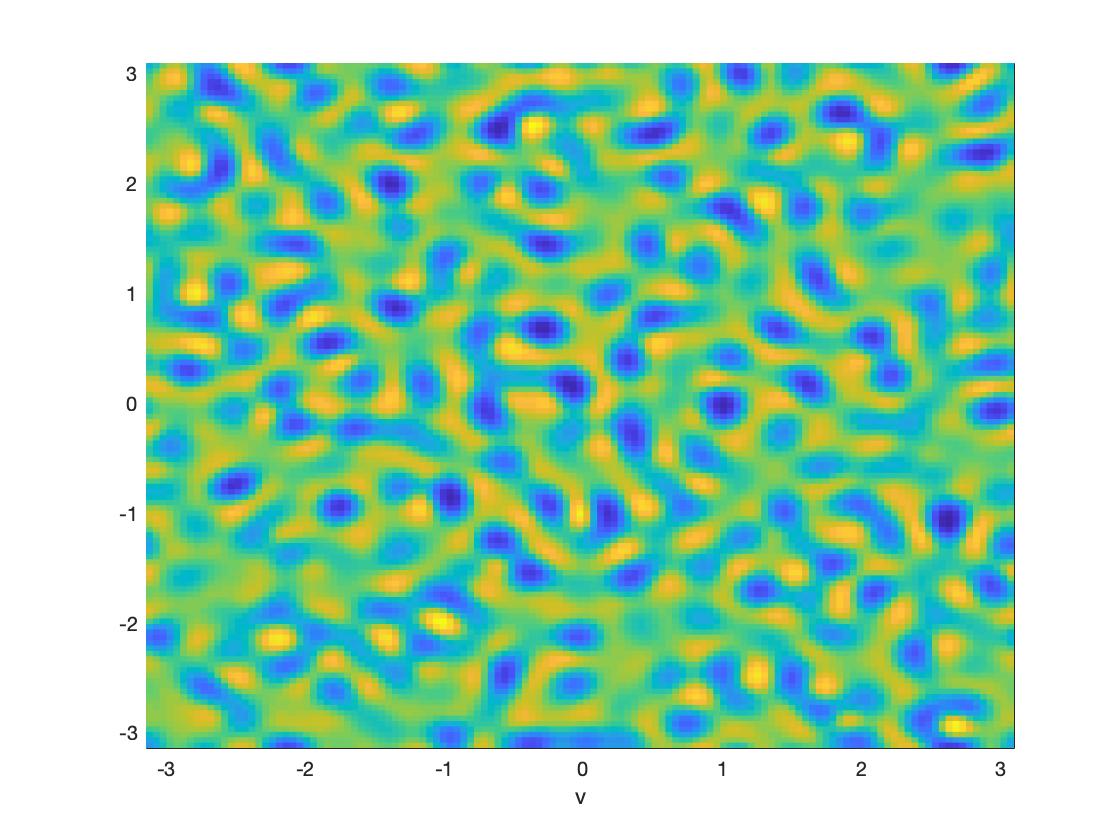}\hskip 0cm}
\subfigure[$u:t=0.1$.]{\includegraphics[width=0.22\textwidth,clip==]{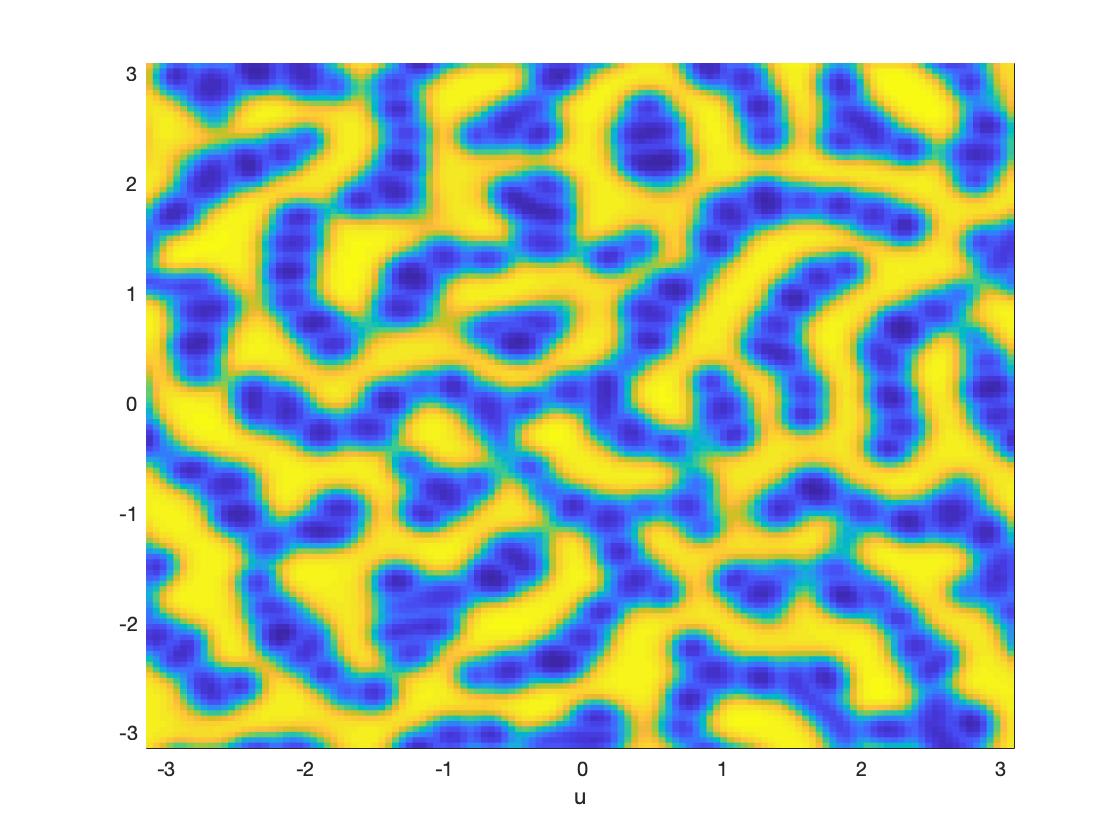}\hskip 0cm}
\subfigure[$v:t=0.1$.]{\includegraphics[width=0.22\textwidth,clip==]{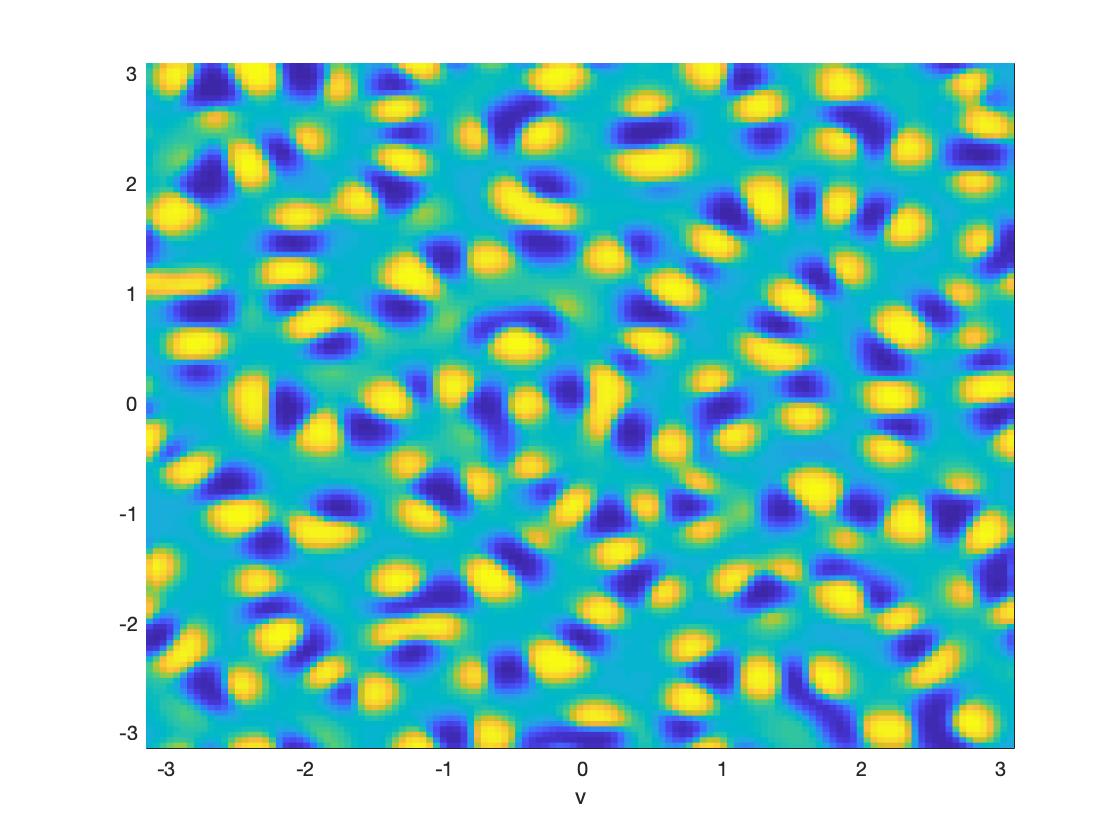}\hskip 0cm}
\subfigure[$u:t=0.5$.]{\includegraphics[width=0.22\textwidth,clip==]{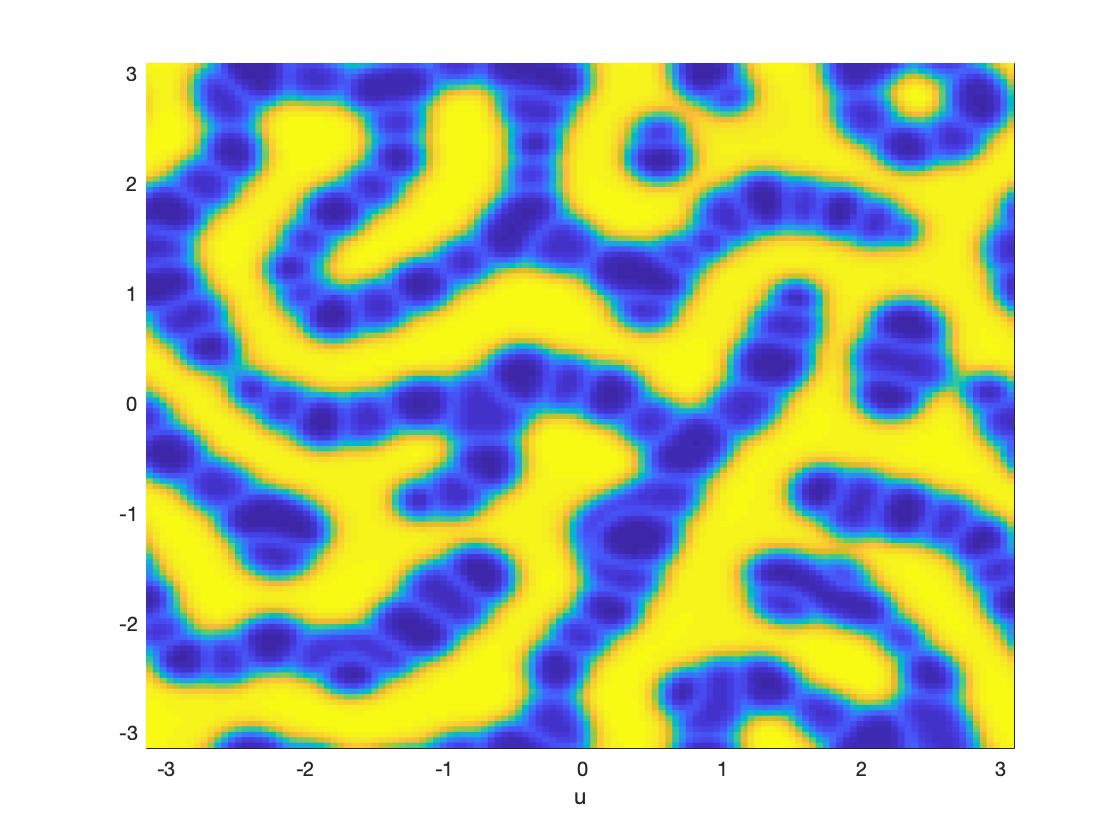}\hskip 0cm}
\subfigure[$v:t=0.5$.]{\includegraphics[width=0.22\textwidth,clip==]{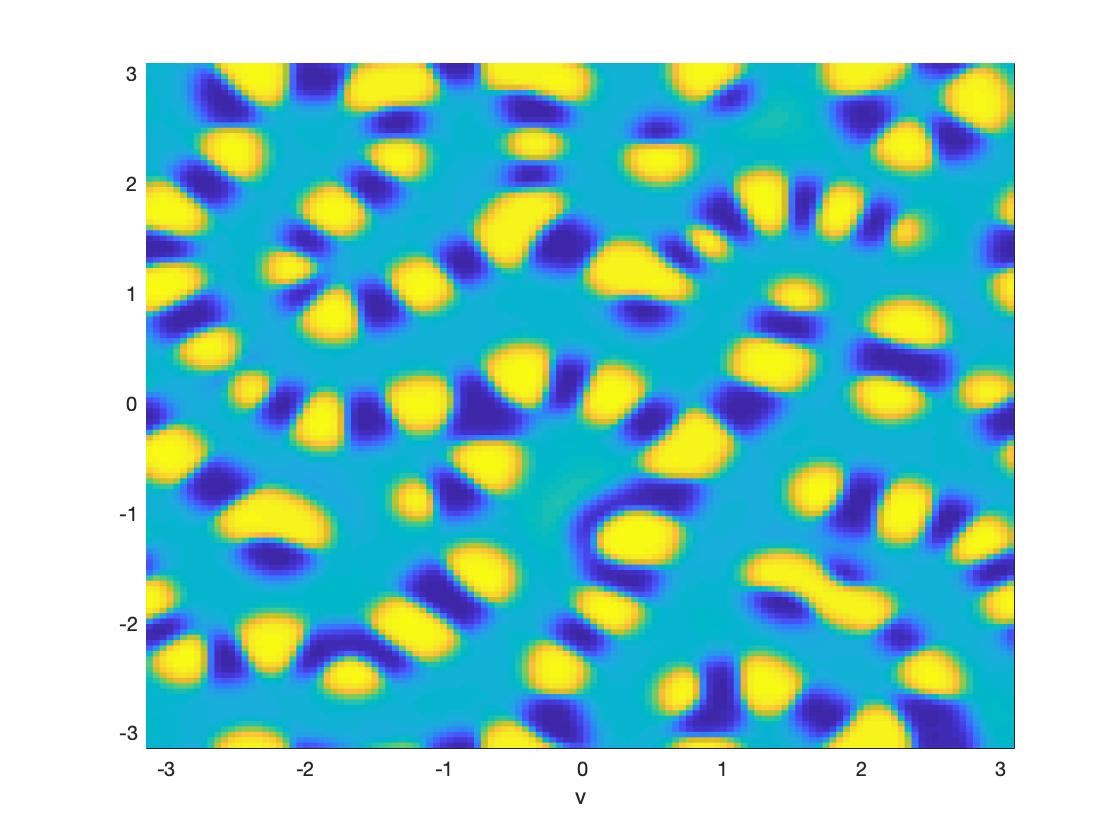}\hskip 0cm}
\subfigure[$u:t=2$.]{\includegraphics[width=0.22\textwidth,clip==]{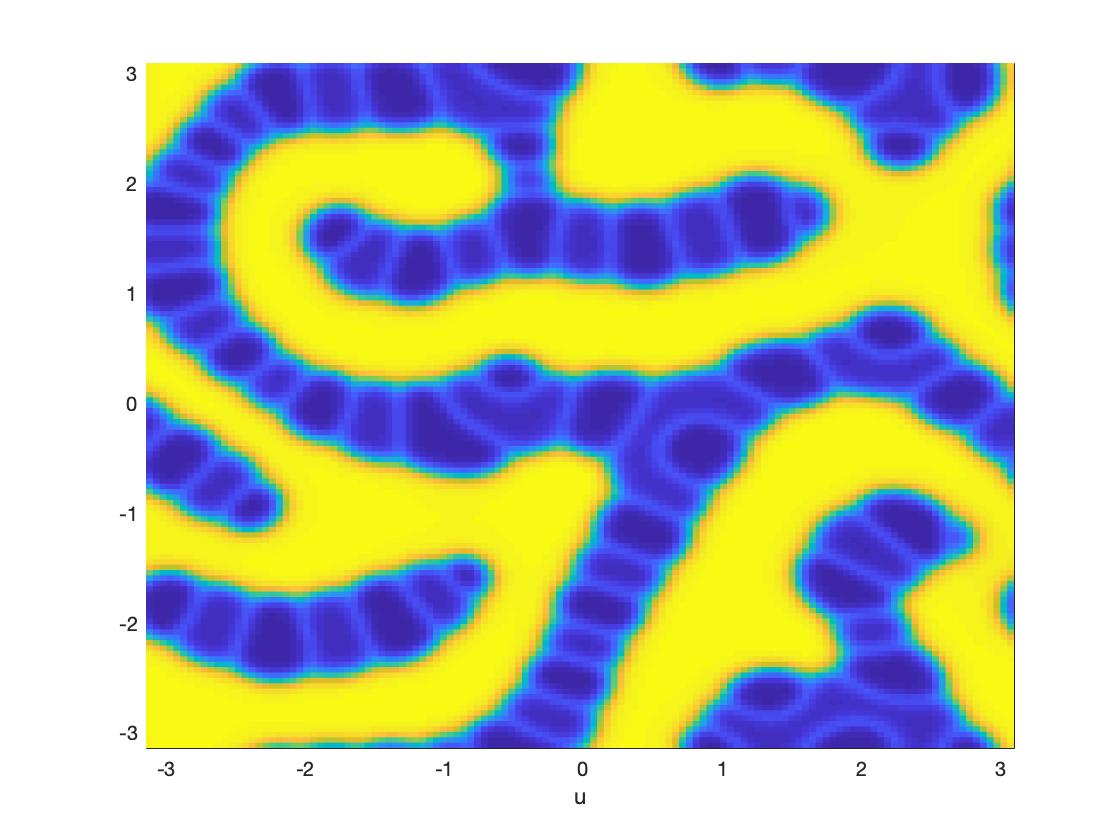}\hskip 0cm}
\subfigure[$v:t=2$.]{\includegraphics[width=0.22\textwidth,clip==]{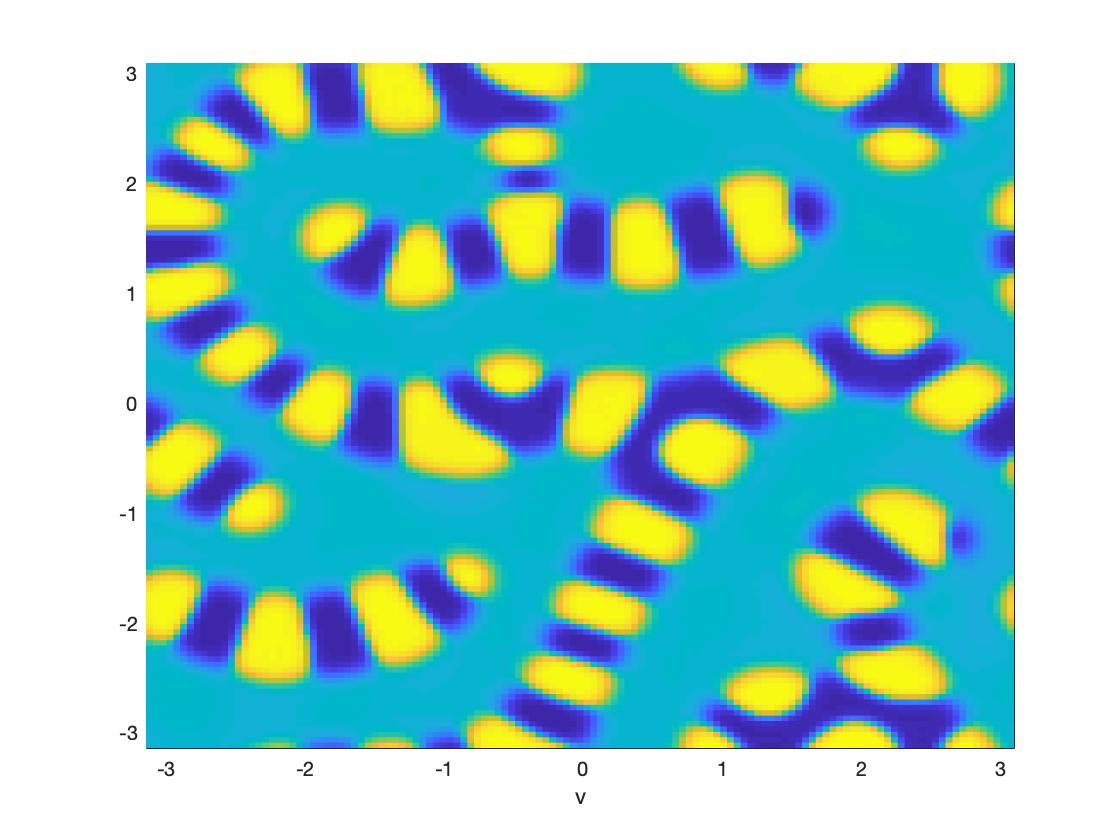}\hskip 0cm}
\caption{Dynamical evolution of the phase variables $u,v$ for the coupled-BCP model with  parameters in \eqref{first}.}\label{spin_1}
\end{figure}

\subsection{Annealing process at higher temperature}
To offer some insight into how the interaction between copolymer segments changes with temperature. We increased the temperature $T$, by decreasing the interfacial width  $\eps_v$, in the third simulation with  the parameters 
\begin{equation}\label{third}
\eps_u=0.03 \quad \eps_v=0.01 \quad \sigma =10 \quad \alpha=0.33 \quad \beta =0.5 \quad \gamma=0.
\end{equation}
 Fig.\,\ref{spin_3} shows various stages of morphology transformation at different times.  We observe that  the coarsening  is much more pronounced than in the first two simulations. Moreover, in addition to  striped ellipsoids, we also observe the appearance of transformation shapes, similar to   the experimental results of annealing in water  depicted in Fig.\,\ref{refer_1}.   

\begin{figure}
\centering
\subfigure[$u:t=0.01$.]{\includegraphics[width=0.22\textwidth,clip==]{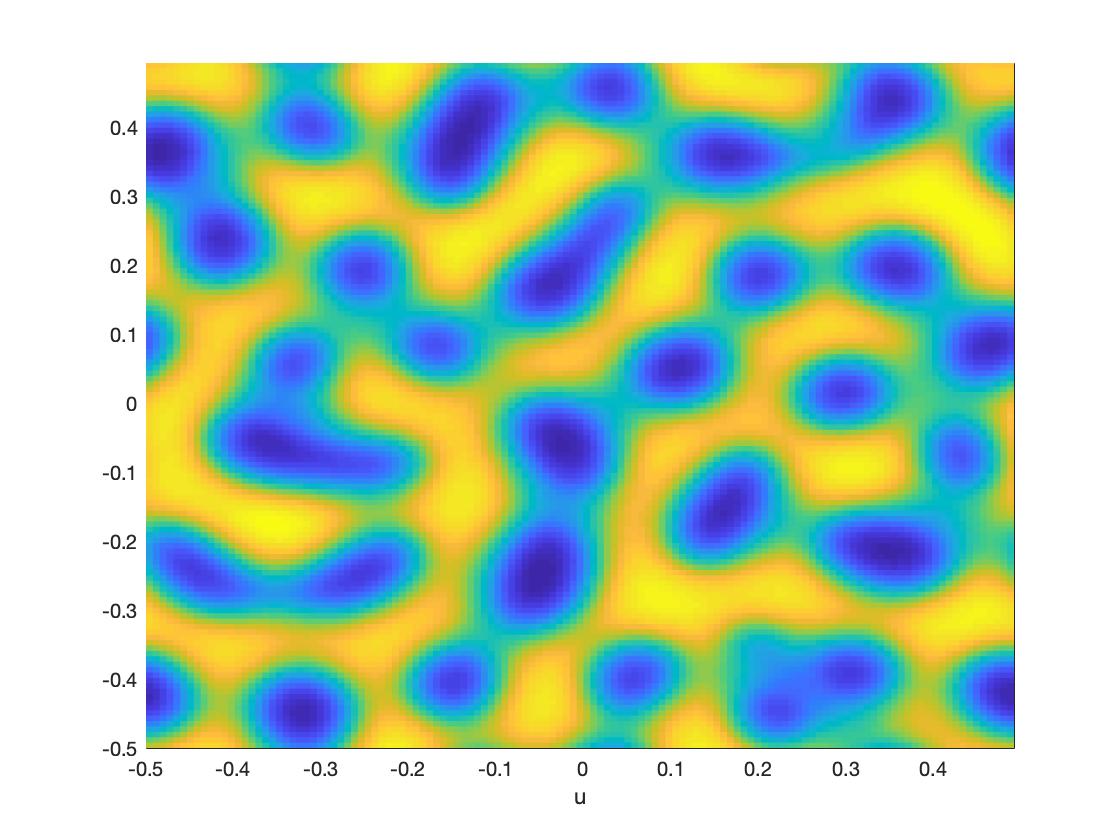}\hskip 0cm}
\subfigure[$v:t=0.01$.]{\includegraphics[width=0.22\textwidth,clip==]{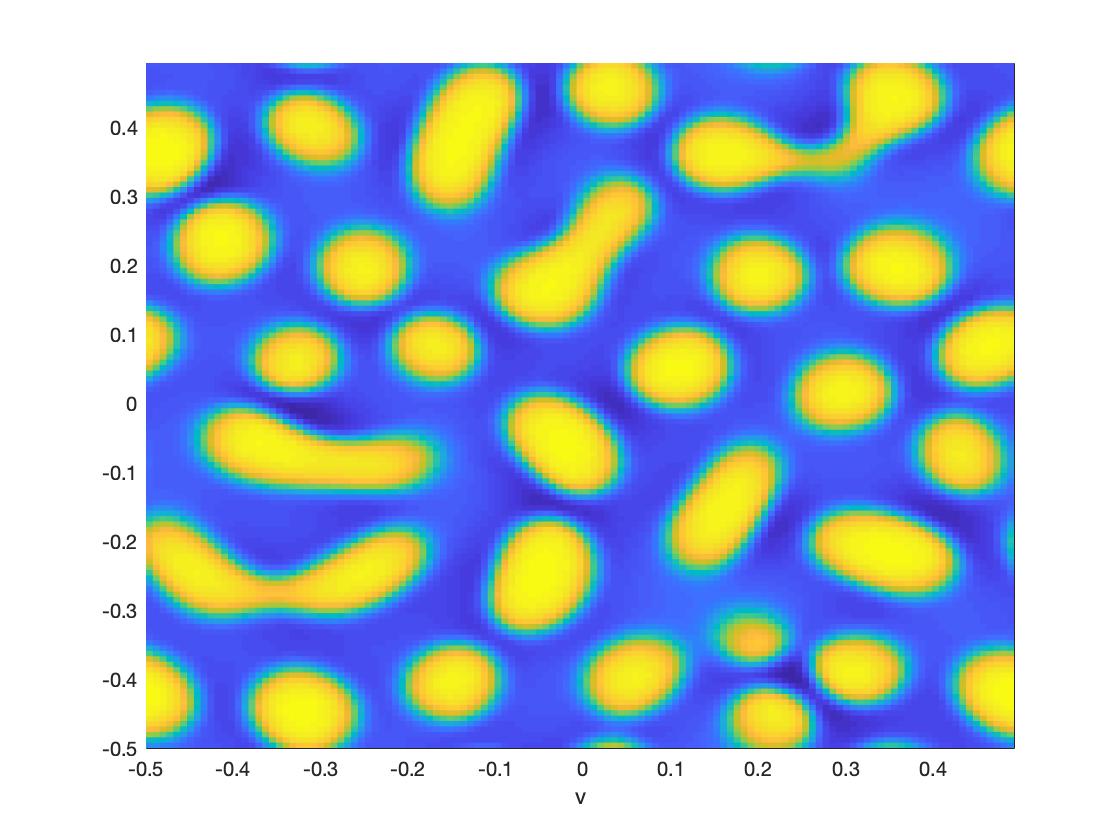}\hskip 0cm}
\subfigure[$u:t=0.1$.]{\includegraphics[width=0.22\textwidth,clip==]{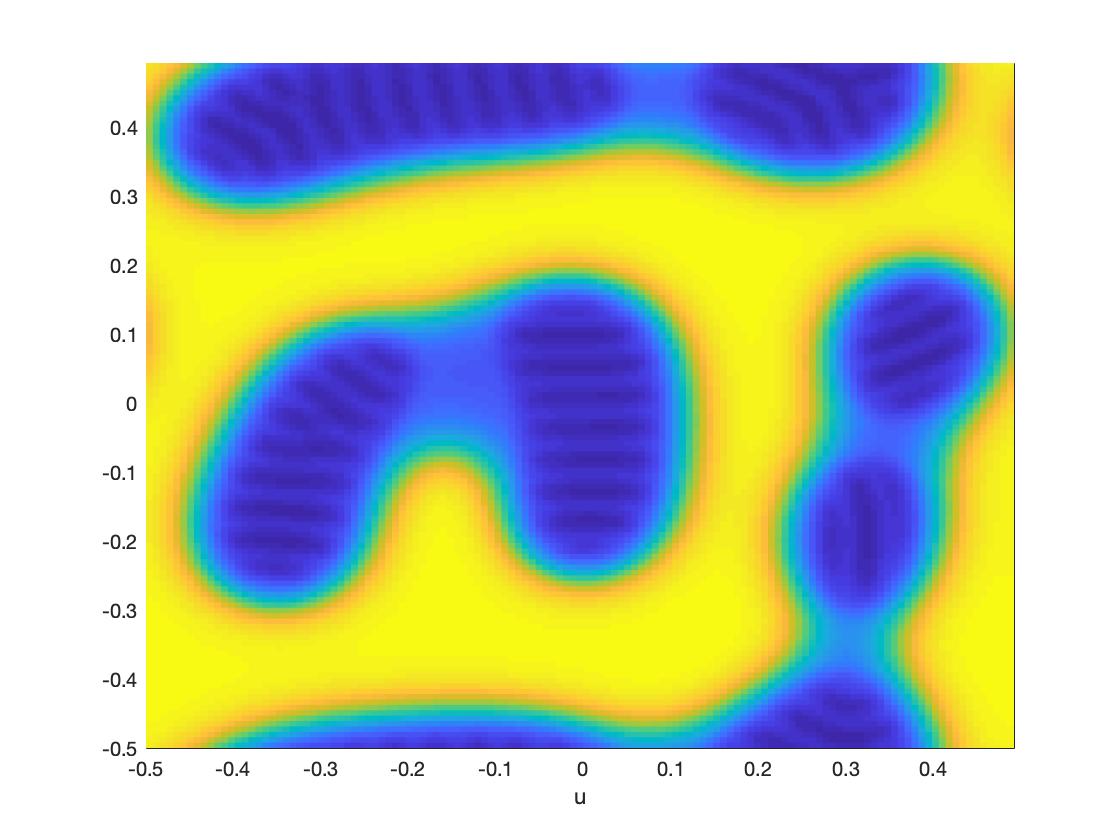}\hskip 0cm}
\subfigure[$v:t=0.1$.]{\includegraphics[width=0.22\textwidth,clip==]{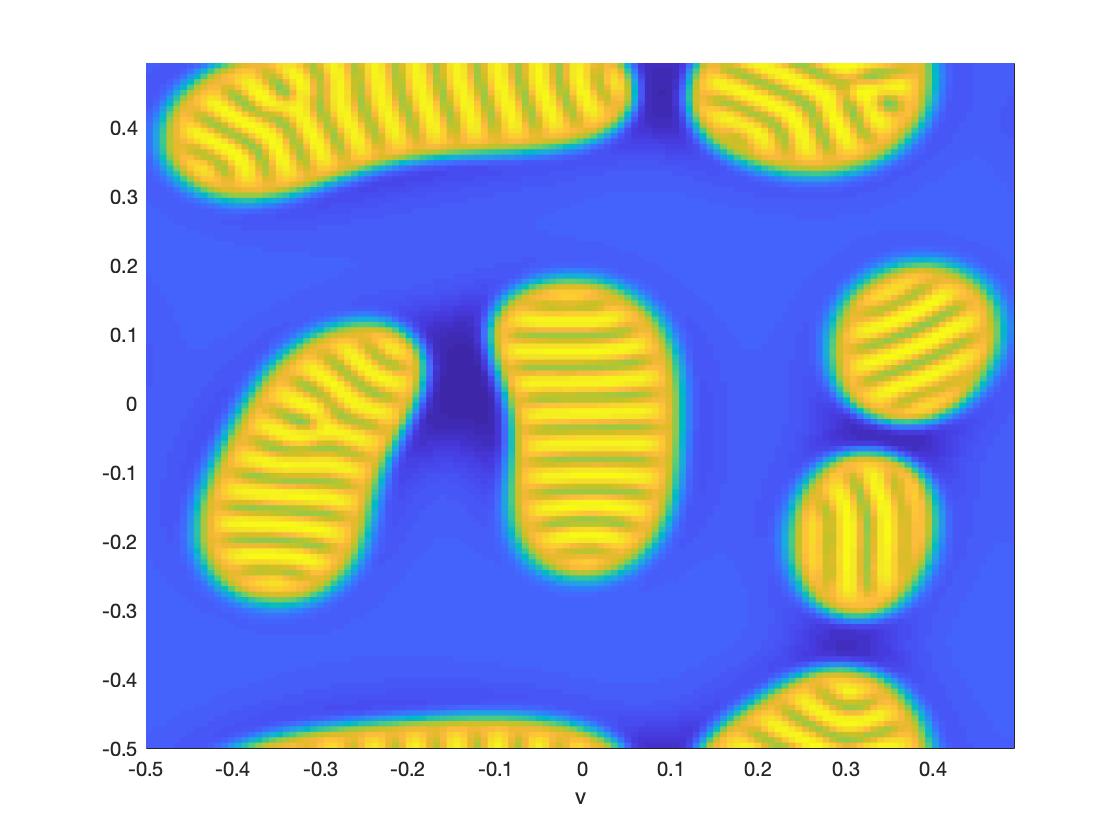}\hskip 0cm}
\subfigure[$u:t=0.25$.]{\includegraphics[width=0.22\textwidth,clip==]{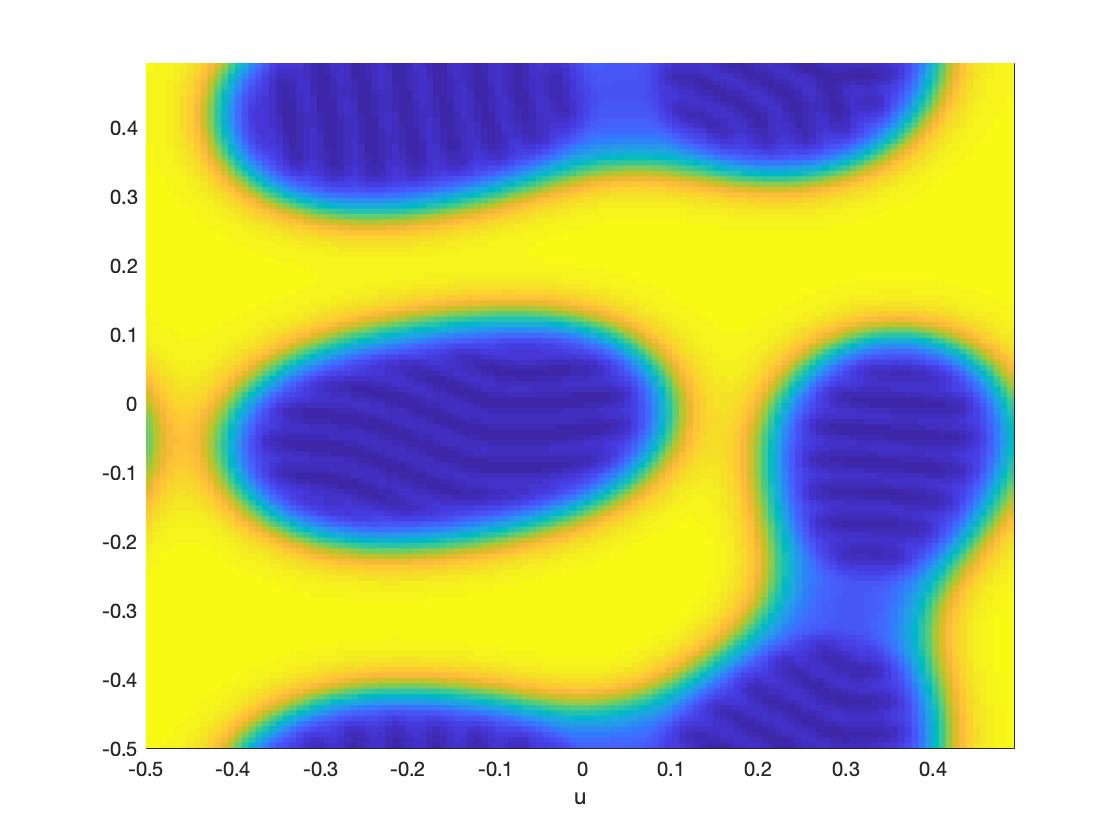}\hskip 0cm}
\subfigure[$v:t=0.25$.]{\includegraphics[width=0.22\textwidth,clip==]{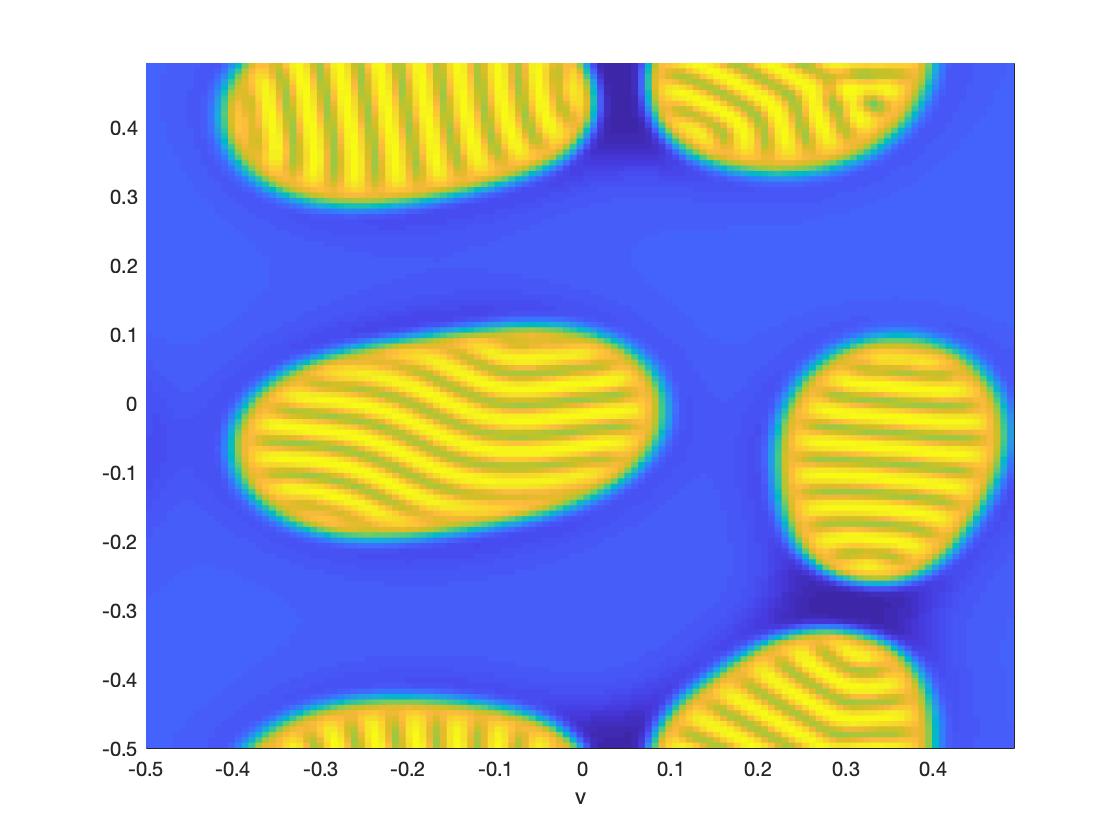}\hskip 0cm}
\subfigure[$u:t=3$.]{\includegraphics[width=0.22\textwidth,clip==]{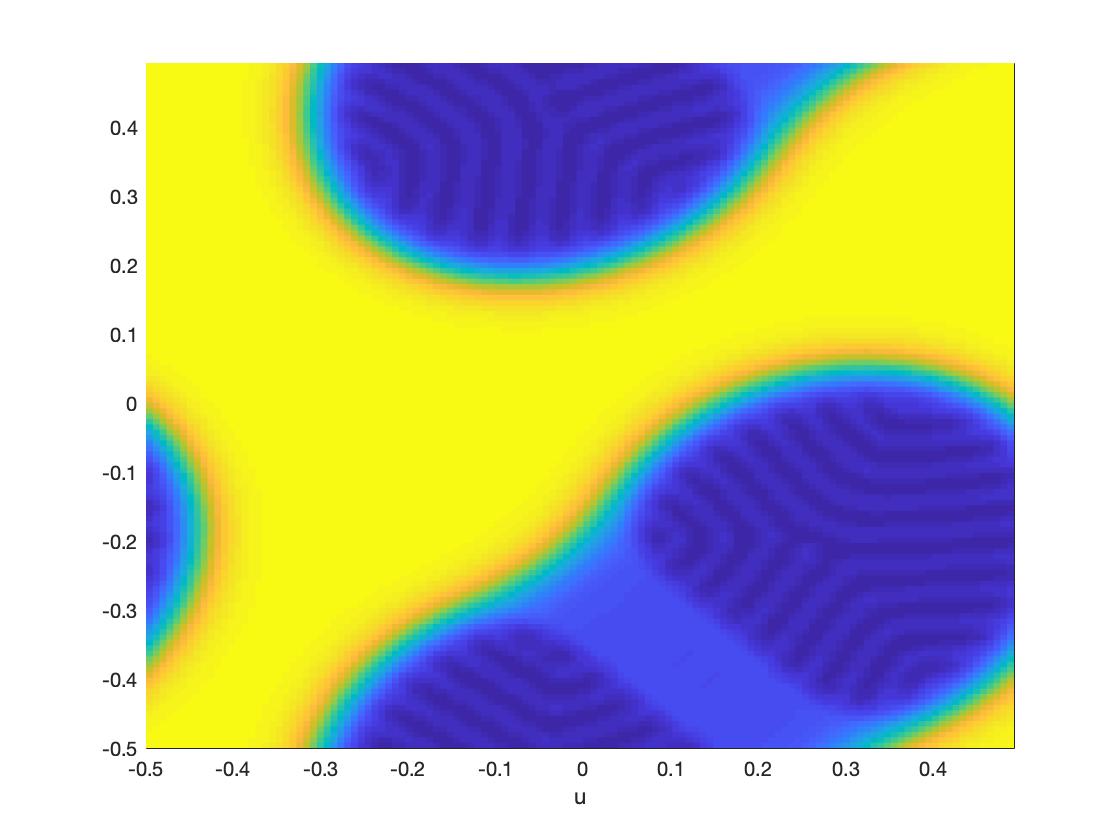}\hskip 0cm}
\subfigure[$v:t=3$.]{\includegraphics[width=0.22\textwidth,clip==]{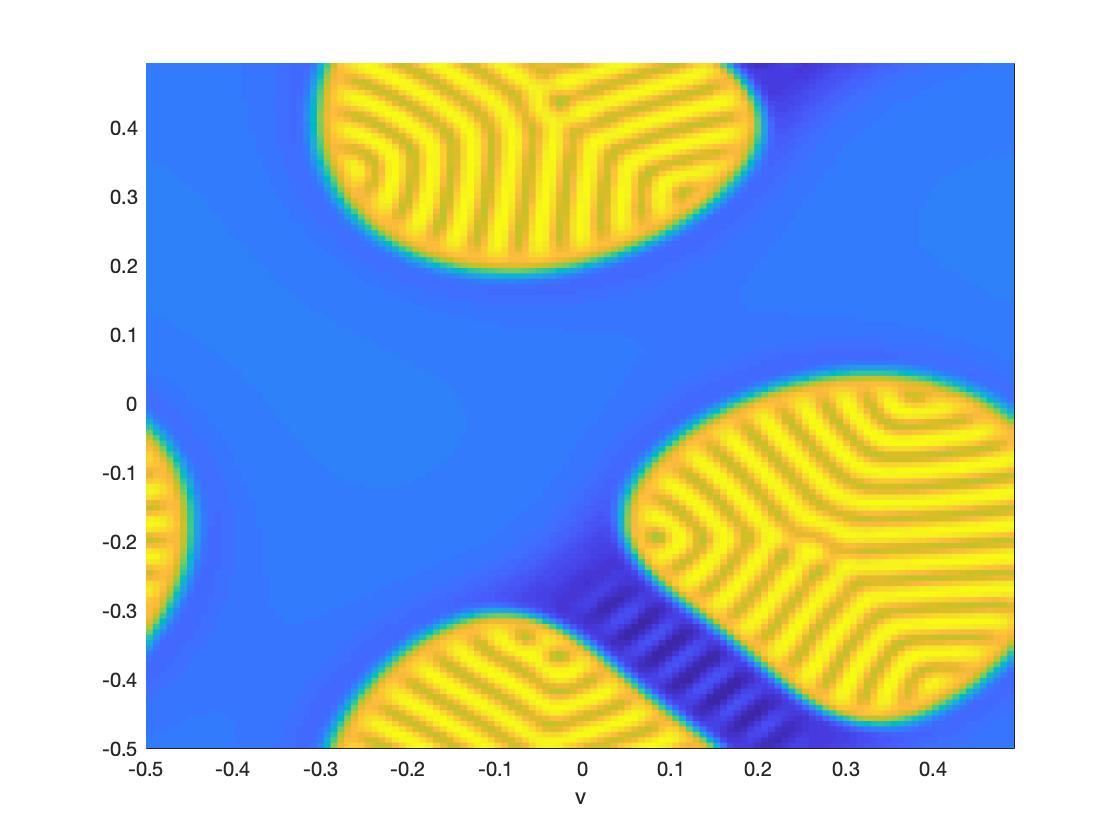}\hskip 0cm}
\caption{The 2D dynamical evolution of the phase variable $u,v$ for the Coupled-BCP model with parameters in \eqref{third}.}\label{spin_3}
\end{figure}

In the last simulation, we switched the sign of $\beta$ while keeping other parameters unchanged, i.e., we 
used the following  parameters:
\begin{equation}\label{fourth}
\eps_u=0.03 \quad \eps_v=0.01 \quad \sigma =10 \quad \alpha=0.33 \quad \beta =-0.5 \quad \gamma=0.
\end{equation}
 The results are presented in  Fig.\,\ref{spin_4}. We observe similar  striped ellipsoids and transformation shapes as in Fig.\,\ref{spin_3} except that the positions of yellow bulk and blue bulk  appeared to be  interchanged.

\begin{figure}
\centering
\subfigure[$u:t=0.01$.]{\includegraphics[width=0.22\textwidth,clip==]{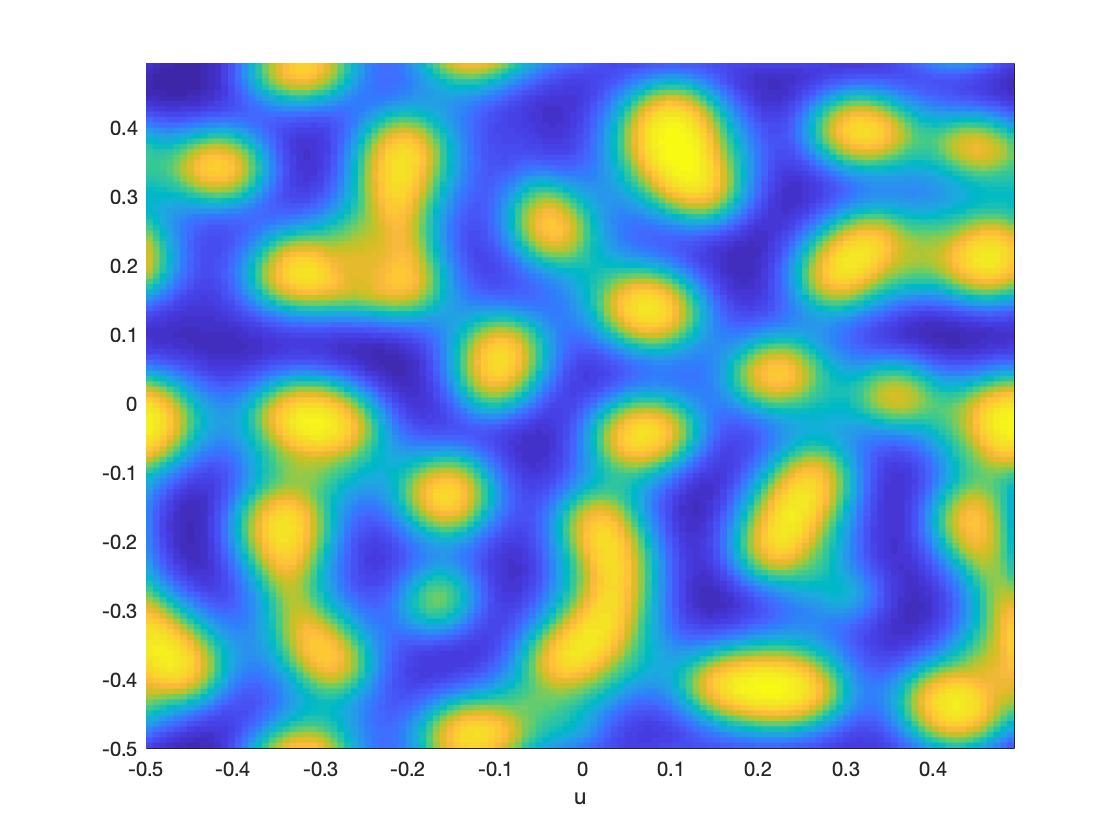}\hskip 0cm}
\subfigure[$v:t=0.01$.]{\includegraphics[width=0.22\textwidth,clip==]{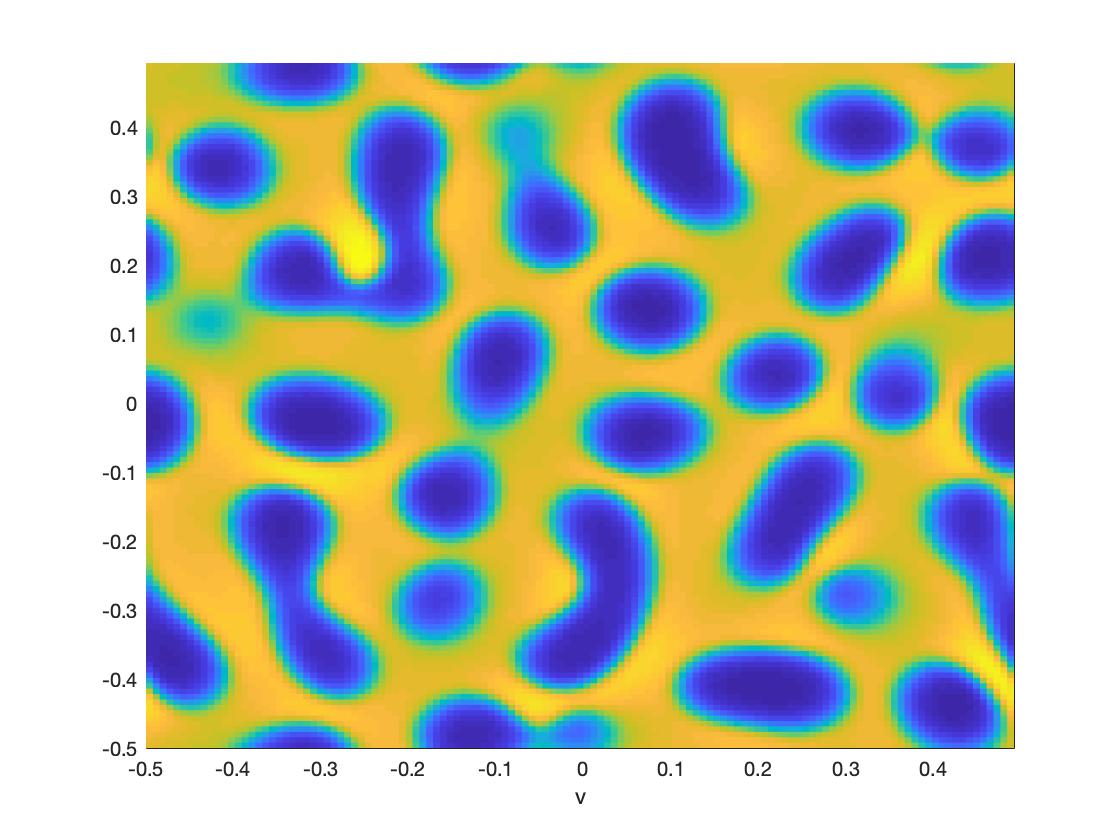}\hskip 0cm}
\subfigure[$u:t=0.1$.]{\includegraphics[width=0.22\textwidth,clip==]{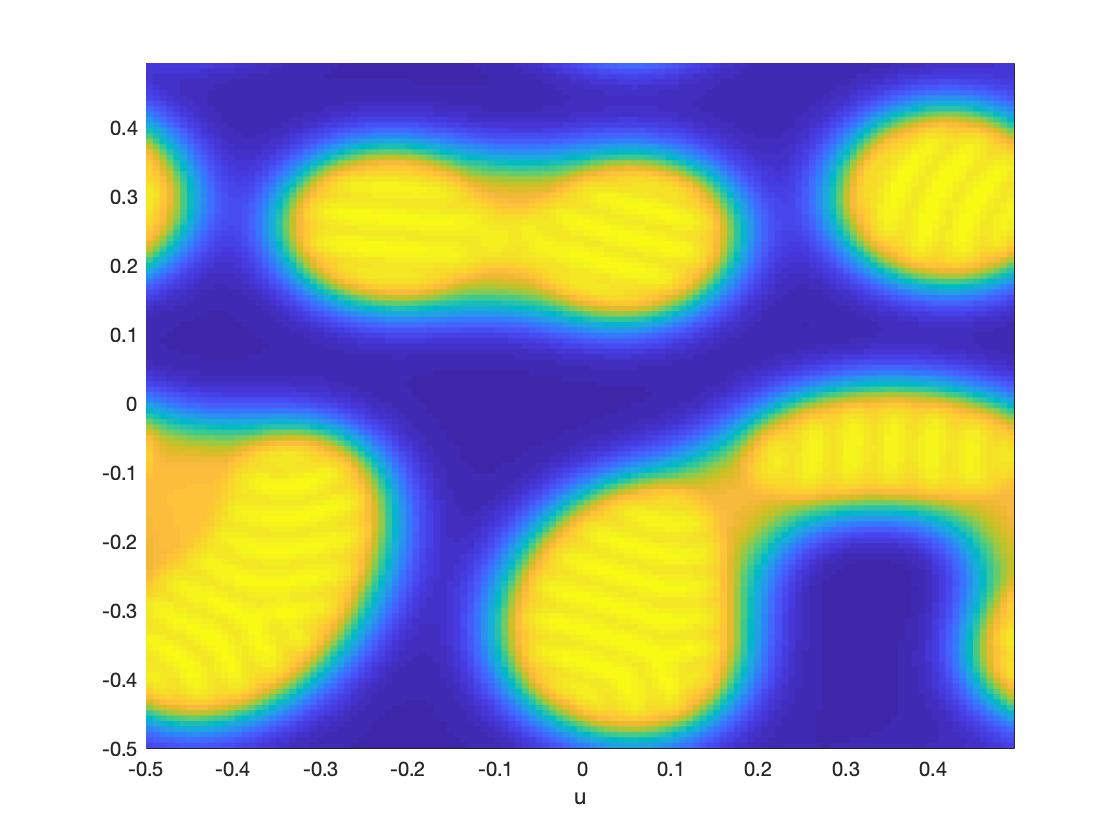}\hskip 0cm}
\subfigure[$v:t=0.1$.]{\includegraphics[width=0.22\textwidth,clip==]{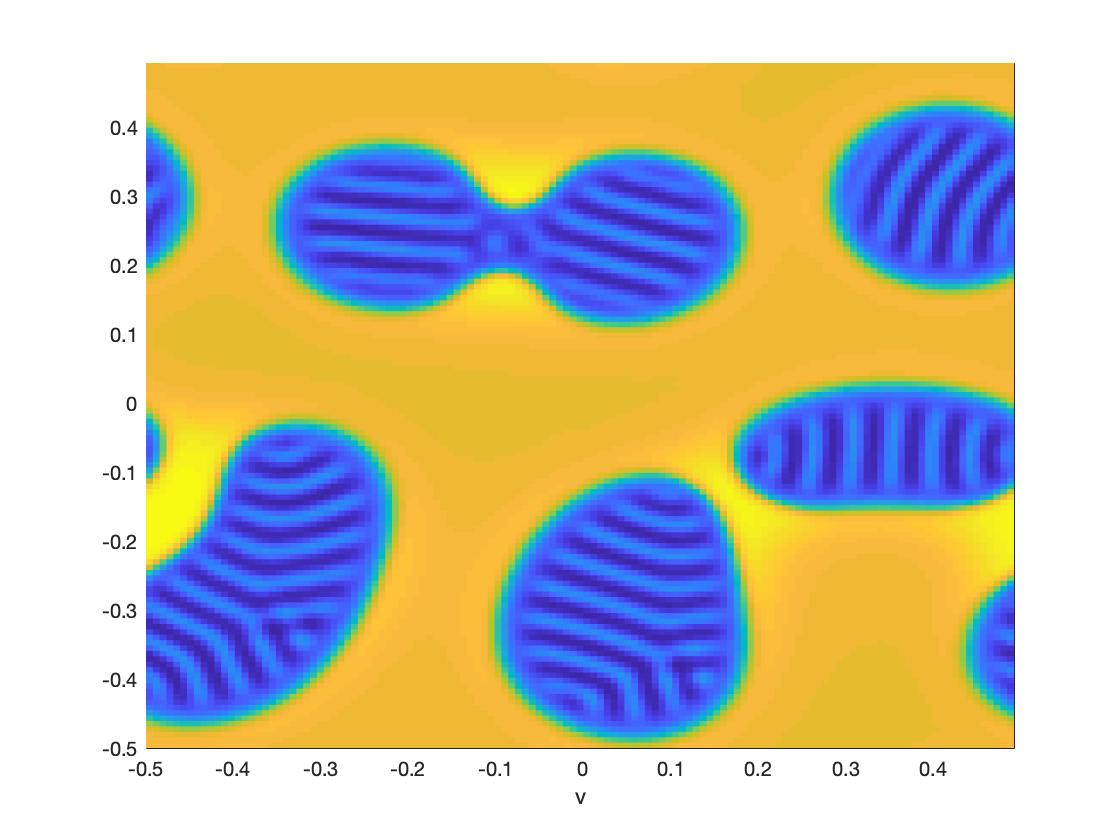}\hskip 0cm}
\subfigure[$u:t=0.25$.]{\includegraphics[width=0.22\textwidth,clip==]{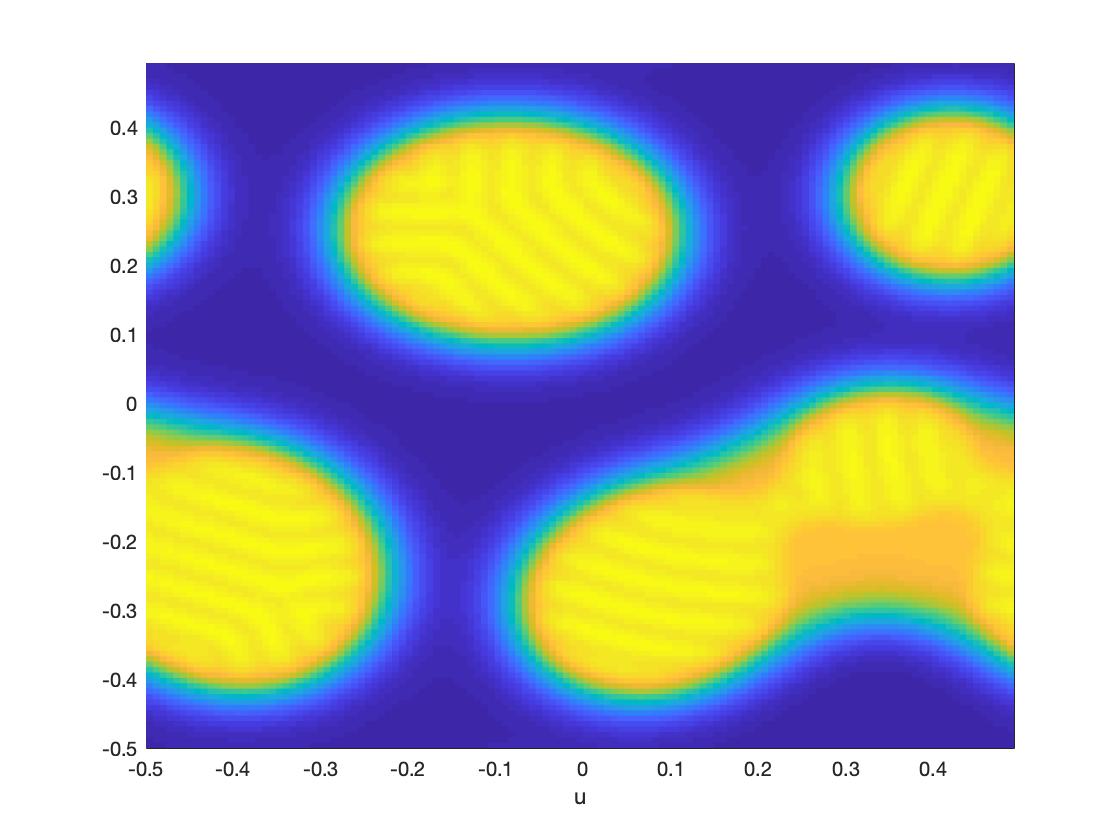}\hskip 0cm}
\subfigure[$v:t=0.25$.]{\includegraphics[width=0.22\textwidth,clip==]{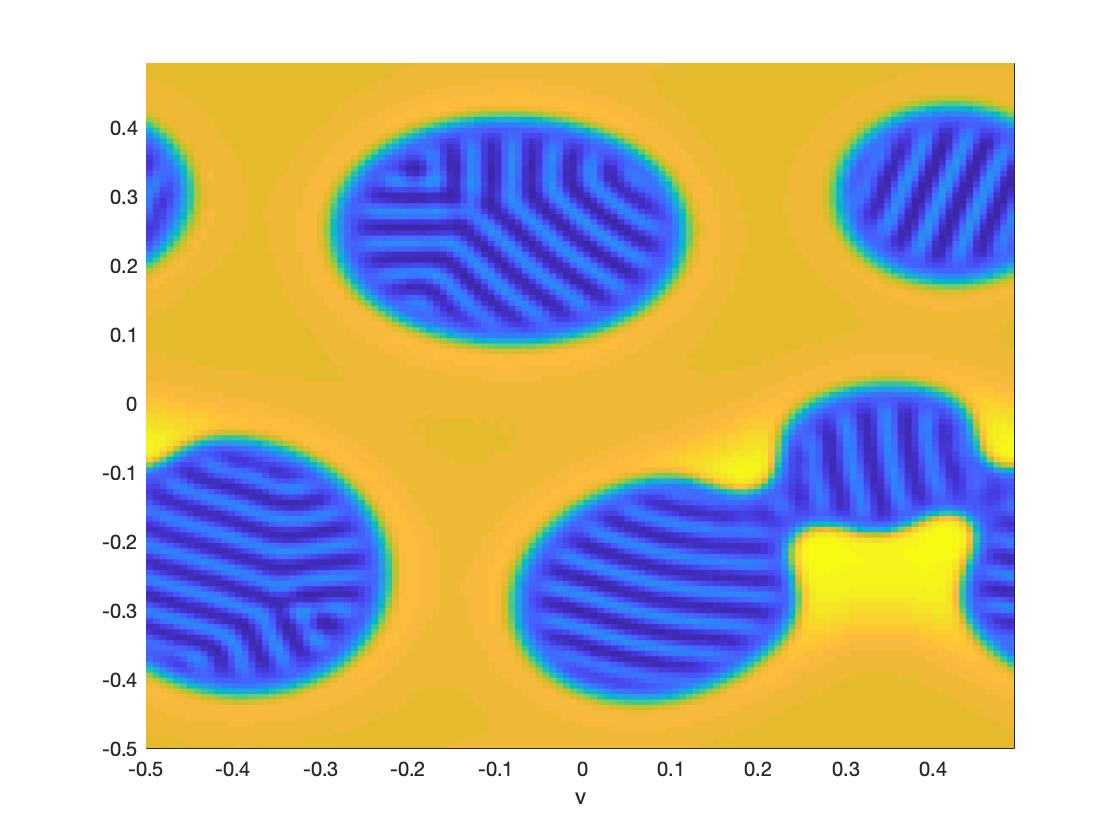}\hskip 0cm}
\subfigure[$u:t=3$.]{\includegraphics[width=0.22\textwidth,clip==]{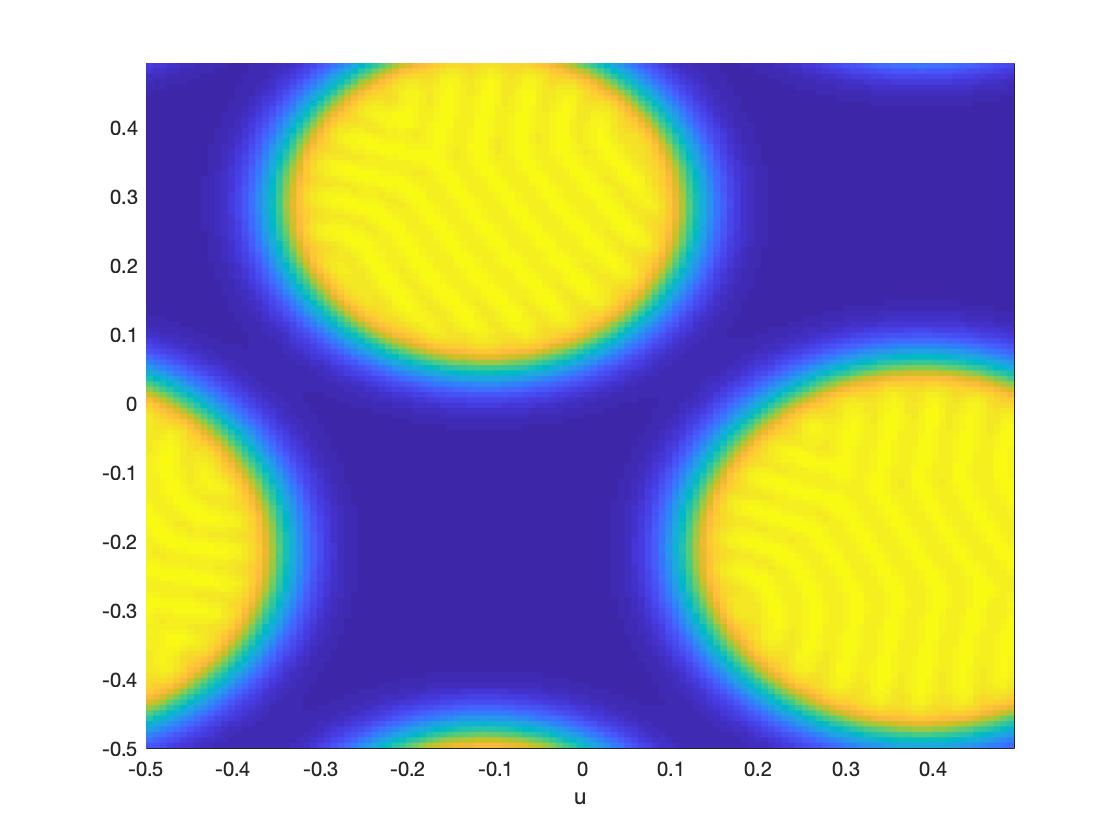}\hskip 0cm}
\subfigure[$v:t=3$.]{\includegraphics[width=0.22\textwidth,clip==]{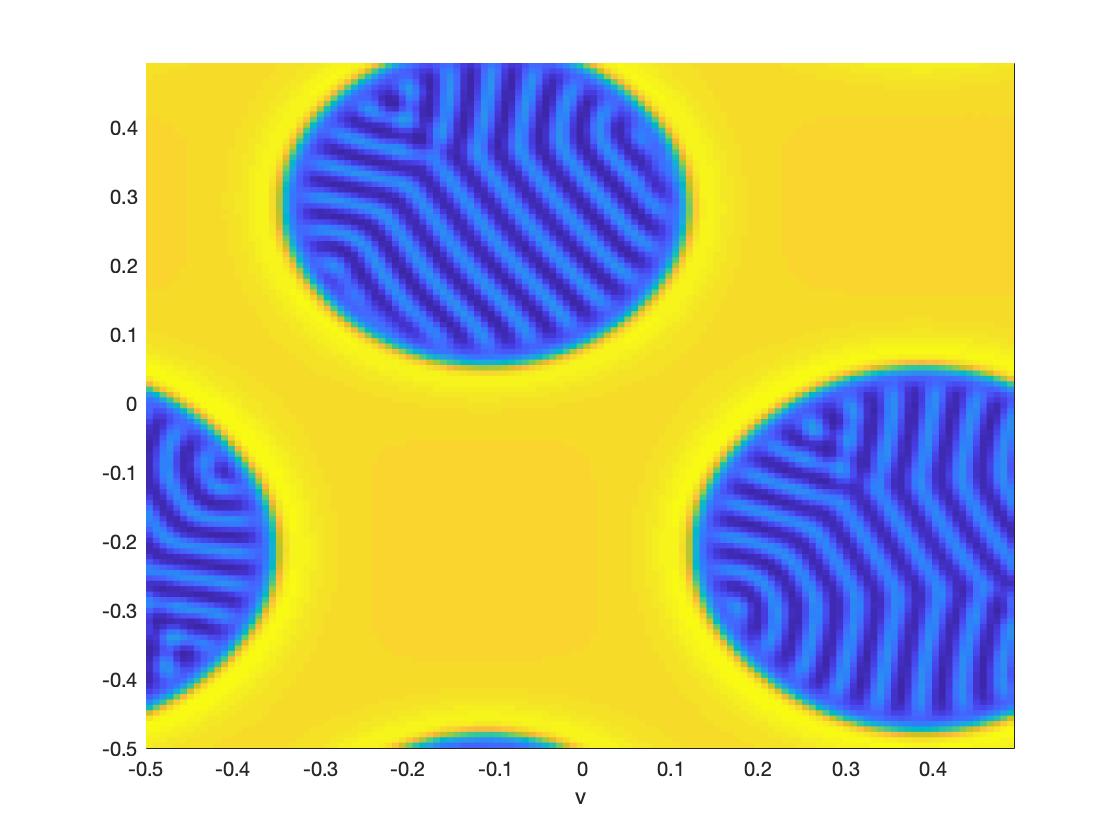}\hskip 0cm}
\caption{The 2D dynamical evolution of the phase variable $u,v$ for the Coupled-BCP model with parameters in \eqref{fourth}.}\label{spin_4}
\end{figure}

\section{Concluding remarks}
We proposed in this paper a new Lagrange multiplier approach for general gradient flows. The new approach shares most of the nice features with the SAV approach, such as, it leads to at least second-order unconditionally energy stable schemes which can be efficiently solved  with similar computational  cost as the SAV approach. It also offers two additional advantages: (i) it does not require that the integral of nonlinear functional in the free energy to be bounded from below, and (ii) it dissipates the original energy.
On the other hand, the new Lagrange approach requires solving a nonlinear algebraic equation with   negligible additional computational cost. but it may fail to converge when the time step is large and leads to additional difficulty in its convergence and error analysis.

We presented ample  numerical results  to validate the stability, efficiency  and accuracy of these schemes.  It is found that as long as the time steps are within a reasonable range which is required for accuracy, the Newton's iteration with one as initial condition for the nonlinear algebraic equation for the Lagrange multiplier always converges in a few steps and the solution is always close to one, which is the exact solution.

The new Lagrange Multiplier approach should be considered as an alternative for the SAV approach. In cases the  integral of nonlinear functional in the free energy is not  bounded from  below or it is difficult to find a lower bound or it is desirable to have a strictly monotonic decreasing energy in the original form, the new Lagrange Multiplier approach should be used.

Although we considered only time-discretized schemes in this paper, the stability results here can be carried over to any consistent finite dimensional Galerkin type approximations since the proofs are all based on a variational formulation with all test functions in the same space as the trial functions.

We also presented  two-dimensional numerical simulations for the coupled BCP model by using our new Lagrangian multiplier scheme with adaptive time stepping, and was able to capture some of the configuration shapes observed in the experiments as well as in  three-dimensional simulations of \cite{varadharajan2018surface}. 
 Since our two-dimensional  simulation can be performed with very little computational cost, compared with three-dimensional simulations using less effective schemes, it is hoped that  we can use this computational tool to fine tune modeling parameters to better understand the nature of the annealing process. This is a task which should be undertaken in collaboration with materials scientists who have expertise in these types of BCPs.

\bigskip 
\noindent{\bf Acknowledgements.} The authors would like to thank Professor 
Yasumasa Nishiura for suggesting us to consider the coupled BCP model, and thank Dr. Edgar Avalos for enlightening discussions.

\bibliographystyle{plain}

\bibliography{vesicle_ref}

\end{document}